\newcommand{\footremember}[2]{%
    \footnote{#2}
    \newcounter{#1}
    \setcounter{#1}{\value{footnote}}%
}
\newcommand{\footrecall}[1]{%
    \footnotemark[\value{#1}]%
}
\newcommand{\Mc}{{\normalfont\rmfamily M}} 
\crefname{subsection}{subsection}{subsections}
\Crefname{conjecture}{Conjecture}{Conjectures}
\newtheorem{theorem}{Theorem}[section]
\newtheorem{proposition}[theorem]{Proposition}
\newtheorem{lemma}[theorem]{Lemma}
\newtheorem{corollary}[theorem]{Corollary}
\newtheorem{conjecture}[theorem]{Conjecture}
\newtheorem{question}[theorem]{Question}
\newtheorem{claim}[theorem]{Claim}
\theoremstyle{definition}
\theoremstyle{remark}
\crefname{claim}{claim}{claims}
\newenvironment{claimproof}[1][Proof]{\par
    \pushQED{\qed}%
    
    \normalfont \topsep6\p@\@plus6\p@\relax
    \trivlist
    \item[\hskip\labelsep
    \textit{#1}\@addpunct{.}~]\ignorespaces
}{%
    \popQED\endtrivlist\@endpefalse
}
\newcommand{\RR}{\mathbb{R}}
\newcommand{\NN}{\mathbb{N}}
\newcommand{\ba}{\mathbf{a}}
\newcommand{\bb}{\mathbf{b}}
\newcommand{\bx}{\mathbf{x}}
\newcommand{\by}{\mathbf{y}}
\newcommand{\be}{\mathbf{e}}
\DeclareMathOperator{\Hom}{Hom}
\newcommand{\bhom}{\operatorname{hom}_\mathrm{b}}
\newcommand{\bHom}{\operatorname{Hom}_\mathrm{b}}
\DeclareMathOperator{\supp}{supp}
\newcommand{\Hess}{\mathop{}\!\mathcal{H}} 
\newcommand{\one}{\mathbf{1}} 
\DeclarePairedDelimiter{\abs}{\lvert}{\rvert}
\newcommand{\defeq}{\coloneqq}
\DeclareRobustCommand{\stirling}{\genfrac\{\}{0pt}{}}
\newcommand{\blank}{{-}}
\newcommand{\HH}{\mathcal{H}}
\newcommand{\K}{\mathcal{K}}
\newlength{\negph@wd}
\DeclareRobustCommand{\negphantom}[1]{%
  \ifmmode
    \mathpalette\negph@math{#1}%
  \else
    \negph@do{#1}%
  \fi
}
\newcommand{\negph@math}[2]{\negph@do{$\m@th#1#2$}}
\newcommand{\negph@do}[1]{%
  \settowidth{\negph@wd}{#1}%
  \hspace*{-\negph@wd}%
}
\newcommand{\swapped}[2]{{#1}^{\bowtie}_{#2}}
\tikzset{every loop/.style={}}
\tikzset{inline vertex/.style={draw, circle, fill=black, minimum size=2pt, inner sep=0pt}}
\newcommand{\indepg}{\!%
    \begin{tikzpicture}[nodes=inline vertex]
        \node (A) at (0,0) {};
        \node (B) at (0.42,0) {};
        \draw[looseness=15] (A) to[out=125, in=55] (A);
        \draw (A) -- (B);
    \end{tikzpicture}%
    \:\!%
}
\newcommand{\pathtwo}{%
    \begin{tikzpicture}[nodes=inline vertex]
        \node (A) at (0, 2.5pt) {};
        \node (B) at (10pt, 0) {};
        \node (C) at (10pt, 5pt) {};
        \draw (A) -- (B);
        \draw (A) -- (C);
    \end{tikzpicture}%
    \:\!%
}
\newcommand{\paththree}{%
    \begin{tikzpicture}[nodes=inline vertex]
        \node (A) at (0,0) {};
        \node (B) at (0.42,0) {};
        \node (C) at (0.42,0.2) {};
        \node (D) at (0,0.2) {};
        \draw (A) -- (B);
        \draw (A) -- (C);
        \draw (C) -- (D);
    \end{tikzpicture}%
    \:\!%
}
\newcommand{\twoedges}{%
    \begin{tikzpicture}[nodes=inline vertex]
        \node (A) at (0,0) {};
        \node (B) at (0.42,0) {};
        \node (C) at (0.42,0.2) {};
        \node (D) at (0,0.2) {};
        \draw (A) -- (B);
        \draw (C) -- (D);
    \end{tikzpicture}%
    \:\!%
}
\newcommand{\triangleaddedge}{%
    \begin{tikzpicture}[nodes=inline vertex]
        \node (A) at (0,0) {};
        \node (B) at (0.42,0) {};
        \node (C) at (0.42,0.2) {};
        \node (D) at (0,0.2) {};
        \draw (A) -- (B);
        \draw (A) -- (C);
        \draw (B) -- (C);
        \draw (C) -- (D);
    \end{tikzpicture}%
    \:\!%
}
\newcommand{\twoloops}{\!%
    \begin{tikzpicture}[nodes=inline vertex]
        \node (A) at (0,0) {};
        \node (B) at (0.42,0) {};
        \draw[looseness=15] (A) to[out=125, in=55] (A);
        \draw[looseness=15] (B) to[out=125, in=55] (B);
    \end{tikzpicture}%
    \!%
}
\newcommand{\loopplusvtx}{\!%
    \begin{tikzpicture}[nodes=inline vertex]
        \node (A) at (0,0) {};
        \node (B) at (0.42,0) {};
        \draw[looseness=15] (A) to[out=125, in=55] (A);
    \end{tikzpicture}%
    \:\!%
}
\newcommand{\loopplusedge}{\!%
    \begin{tikzpicture}[nodes=inline vertex]
        \node (A) at (0,0) {};
        \node (B) at (0.42,0) {};
        \node (C) at (0.42,0.2) {};
        \draw (B) -- (C);
        \draw[looseness=15] (A) to[out=125, in=55] (A);
    \end{tikzpicture}%
    \:\!%
}
\newcommand{\loopplusloopededge}{%
    \begin{tikzpicture}[nodes=inline vertex]
        \node (A) at (0,0) {};
        \node (B) at (0.42,0) {};
        \node (C) at (0.42,0.2) {};
        \draw (B) -- (C);
        \draw[looseness=15] (A) to[out=125, in=55] (A);
        \draw[looseness=15] (C) to[out=325, in=35] (C);
    \end{tikzpicture}%
}
\newcommand{\loopedpathtwo}{\!%
    \begin{tikzpicture}[nodes=inline vertex]
        \node (A) at (0,0) {};
        \node (B) at (0.42,0) {};
        \node (C) at (0.42,0.2) {};
        \draw (A) -- (C);
        \draw (B) -- (C);
        \draw[looseness=15] (A) to[out=125, in=55] (A);
    \end{tikzpicture}%
    \:\!%
}
\newcommand{\twoloopspathtwo}{\!%
    \begin{tikzpicture}[nodes=inline vertex]
        \node (A) at (0,0) {};
        \node (B) at (0.42,0) {};
        \node (C) at (0.42,0.2) {};
        \draw (A) -- (C);
        \draw (B) -- (C);
        \draw[looseness=15] (C) to[out=325, in=35] (C);
        \draw[looseness=15] (A) to[out=125, in=55] (A);
    \end{tikzpicture}%
}
\title{Counting homomorphisms in antiferromagnetic graphs via Lorentzian polynomials}
    \author{
    Joonkyung Lee%
    \footremember{Yonsei}{
        Department of Mathematics, Yonsei University, Seoul, South Korea. Research supported by Samsung STF Grant SSTF-BA2201-02, the Yonsei University Research Fund 2023-22-0125 and the National Research Foundation of Korea (NRF) grant MSIT NRF-2022R1C1C1010300. Email: \texttt{\{joonkyunglee, jaehyeonseo\}@yonsei.ac.kr}.
    }
    \and
    Jaeseong Oh%
    \thanks{
        June E Huh Center for Mathematical Challenges, Korea Institute for Advanced Study, South Korea.
        Research supported by a KIAS Individual Grant (HP083401) and the National Research Foundation of Korea (NRF) grant MSIT NRF-2022R1C1C1010300. Email: \texttt{jsoh@kias.re.kr}.
    }
    \and
    Jaehyeon Seo%
    \footrecall{Yonsei}
}
\date{}
\begin{document}
\maketitle

\begin{abstract}
    An edge-weighted graph $G$, possibly with loops, is said to be \emph{antiferromagnetic} if it has nonnegative weights and at most one positive eigenvalue, counting multiplicities.
    The number of graph homomorphisms from a graph $H$ to an antiferromagnetic graph $G$ generalises various important parameters in graph theory, including the number of independent sets and proper vertex-colourings, as well as their relaxations in statistical physics.

    We obtain homomorphism inequalities for various graphs $H$ and antiferromagnetic graphs~$G$ of the form
    \[
        \lvert\operatorname{Hom}(H,G)\rvert^2 \leq \lvert\operatorname{Hom}(H\times K_2,G)\rvert,
    \]
    where $H\times K_2$ denotes the tensor product of $H$ and $K_2$.
    Firstly, we show that the inequality holds for any $H$ obtained by blowing up vertices of a bipartite graph into complete graphs and any antiferromagnetic $G$. In particular, one can take $H=K_{d+1}$, which already implies a new result for the Sah--Sawhney--Stoner--Zhao conjecture on the maximum number of $d$-regular graphs in antiferromagnetic graphs.
    Secondly, the inequality also holds for $G=K_q$ and those $H$ obtained by blowing up vertices of a bipartite graph into complete multipartite graphs, paths or even cycles.
    
    Both results can be seen as the first progress towards Zhao's conjecture on $q$-colourings, which states that the inequality holds for any $H$ and $G=K_q$, after his own work. Our method leverages on the emerging theory of Lorentzian polynomials due to Br\"and\'en and Huh and log-concavity of the list colourings of bipartite graphs, which may be of independent interest.
\end{abstract}

\section{Introduction}
For graphs $G$ and $H$, a \emph{homomorphism} from $H$ to $G$ is a vertex map that preserves adjacency. There are numerous concepts in graph theory that rephrase in terms of homomorphisms, which include two fundamental examples: independent sets and proper vertex-colourings with $q$ colours, or simply \emph{$q$-colourings.}
Indeed, there is a natural bijection from independent sets of a graph $H$ and homomorphisms from $H$ to $G=\indepg$ and each $q$-colouring of $H$ corresponds to a homomorphism from $H$ to $G=K_q$, the complete graph on $q$ vertices.

This correspondence translates extremal problems on the number of independent sets or $q$-colourings to homomorphism inequalities.
For instance, the Kahn--Zhao theorem \cite{kahn2001entropy,zhao2010number} states that the complete bipartite graph $K_{d,d}$ has the maximum number of independent sets amongst $d$-regular graphs. More precisely, if $H$ is $d$-regular, then
\begin{equation}\label{eq:indep_count_in_d-reg}
    \hom(H,\indepg)^{1/v(H)} \leq \hom(H\times K_2,\indepg)^{1/(2v(H))}\leq \hom(K_{d,d},\indepg)^{1/(2d)},
\end{equation}
where $H\times K_2$ denotes the tensor product of $H$ and $K_2$ and $\hom(H,G)=\abs{\Hom(H,G)}$ is the number of homomorphisms from $H$ to $G$ and $v(H)$ is the number of vertices in $H$.
We note that the first inequality is due to Zhao's `bipartite swapping trick' in~\cite{zhao2010number} and the second is due to Kahn~\cite{kahn2001entropy}.

There have been a lot of exciting developments along these lines of research for the last decade or two, which touch upon information theory~\cite{friedgut2004hypergraphs} and statistical physics~\cite{davies2017independent}. For a survey on the topic, see \cite{zhao2017extremal}.
Especially, the second inequality in~\eqref{eq:indep_count_in_d-reg} has been generalised multiple times~\cite{galvin2004weighted,sah2019number,sah2020reverse}, whose strongest form is the `reverse Sidorenko' inequality of Sah, Sawhney, Stoner, and Zhao~\cite{sah2020reverse}.
\begin{theorem}[Theorem~1.9~in~\cite{sah2020reverse}]\label{thm:SSSZ}
    Let $H$ be a $d$-regular triangle-free graph and let $G$ be a graph possibly with loops. Then
    \begin{equation}\label{eq:reverseSidorenko}
        \hom(H,G)^{1/v(H)} \leq \hom(K_{d,d},G)^{1/(2d)}.
    \end{equation}
\end{theorem}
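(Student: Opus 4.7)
The plan is to use the entropy method of Kahn~\cite{kahn2001entropy} and its subsequent refinements. Let $\phi \in \Hom(H,G)$ be uniformly random and set $X_v \defeq \phi(v)$ for each $v \in V(H)$, so that $\entH(X_{V(H)}) = \log \hom(H,G)$. The objective is to rewrite this global entropy as an average of local entropies that depend only on the structure of $G$ around a single edge of $H$.

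First I would perform a global-to-local reduction via Shearer's inequality. Since $H$ is $d$-regular, its edge family covers each vertex of $V(H)$ exactly $d$ times, so Shearer's inequality yields
\[
    d \cdot \log \hom(H,G) \leq \sum_{uv \in E(H)} \entH(X_u, X_v).
\]
Dividing through by $e(H) = v(H)d/2$ reduces the target bound \eqref{eq:reverseSidorenko} to the local inequality
\[
    \mathbb{E}_{uv \in E(H)}\bigl[\entH(X_u, X_v)\bigr] \leq \frac{1}{d} \log \hom(K_{d,d}, G).
\]
Here the triangle-free hypothesis becomes crucial: for each edge $uv$ of $H$, the neighborhoods $N_H(u)$ and $N_H(v)$ are disjoint, so the conditional environments $X_{N_H(u)\setminus\{v\}}$ and $X_{N_H(v)\setminus\{u\}}$ can be analyzed independently, without having to reconcile any common neighbors in $H$.

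The second step is to establish the local inequality. I would pass to the conditional distributions of $X_{N_H(v)}$ given $X_v$, which by $d$-regularity are distributions on $d$-tuples of $G$-neighbors of $X_v$. Using the chain rule together with convexity of entropy, the problem reduces to a maximum-entropy variational problem over distributions on star-configurations $(y;z_1,\dots,z_d)$ with $z_i \in N_G(y)$. A Lagrangian analysis, taking the edge-marginal as the constraint, identifies the extremizer as the distribution induced by uniform homomorphisms $K_{d,d} \to G$; substituting this value of the optimum produces exactly $\frac{1}{d}\log\hom(K_{d,d},G)$.

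The main obstacle is the second step, specifically matching the constant $1/d$. A direct subadditivity or H\"older bound loses a multiplicative factor, so one must instead run a convex-duality argument that exploits log-concavity of sequences counting certain bipartite configurations attached to vertices of $G$. This log-concavity is the technical heart of the Sah--Sawhney--Stoner--Zhao argument. It is plausible --- and indeed aligned with the philosophy of the present paper --- that the Lorentzian polynomial framework provides an alternative encoding of precisely this log-concavity, though extracting a fully uniform proof along those lines would itself require care.
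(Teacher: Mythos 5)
This statement is not proved in the paper; it is imported from Sah, Sawhney, Stoner, and Zhao~\cite{sah2020reverse} and used as a black box. So there is no internal proof to compare against, and your sketch has to be judged on its own terms.

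The first reduction already fails. Applying Shearer's inequality to the edge cover (which covers each vertex of the $d$-regular $H$ exactly $d$ times) gives
\[
    d\log\hom(H,G) \leq \sum_{uv\in E(H)} \entH(X_u,X_v),
\]
and dividing by $e(H)=v(H)d/2$ gives
\[
    \frac{2}{v(H)}\log\hom(H,G) \leq \mathbb{E}_{uv\in E(H)}\bigl[\entH(X_u,X_v)\bigr],
\]
so to conclude~\eqref{eq:reverseSidorenko} you would indeed need the local inequality
$\mathbb{E}_{uv}\bigl[\entH(X_u,X_v)\bigr] \leq \frac{1}{d}\log\hom(K_{d,d},G)$
that you write down. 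But that local inequality is \emph{false}. Take $H=K_{2,2}=C_4$ (which is $2$-regular and triangle-free) and $G=K_2$. Then $\hom(C_4,K_2)=2$, the uniform $\phi$ is one of the two alternating $2$-colourings, and for every edge $uv$ the pair $(X_u,X_v)$ is uniform on $\{(1,2),(2,1)\}$, so $\entH(X_u,X_v)=\log 2$. Meanwhile $\frac{1}{d}\log\hom(K_{d,d},G)=\frac12\log 2$. So the proposed local bound is off by a factor of two, in a case where~\eqref{eq:reverseSidorenko} itself holds with equality. The edge cover is simply too lossy a Shearer family: it throws away all the conditional structure that the entropy argument must exploit.

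The actual SSSZ argument does not compare pairwise entropies $\entH(X_u,X_v)$ against $\hom(K_{d,d},G)$. It reveals $\phi$ sequentially, decomposes $\entH(X_{V(H)})$ into conditional entropies of each vertex given its already-revealed neighbourhood, and bounds those via a nontrivial local optimisation over probability distributions attached to the stars of $G$; triangle-freeness enters there, not merely to say $N_H(u)$ and $N_H(v)$ are disjoint. A Kahn-style closed-neighbourhood decomposition, rather than an edge cover, is the minimum starting point for a correct argument; the step you flag as ``the technical heart'' cannot even be reached from your step~1, because the intermediate inequality it would be asked to prove is already false.
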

We note that the original statement of~\cite[Theorem~1.9]{sah2020reverse} allows distinct degrees in $H$, which generalises~\Cref{thm:SSSZ} to arbitrary triangle-free graphs. On the other hand, the triangle-freeness condition is essential in the sense that, for every $H$ that contains a triangle, there exists $G$ that breaks the inequality~\eqref{eq:reverseSidorenko}. 
In contrast, the Kahn--Zhao theorem requires no condition on $H$ while an extremely specific target graph $G=\indepg$ is chosen.

 In~\cite{sah2020reverse}, Sah, Sawhney, Stoner and Zhao also showed that letting $G=K_q$ in~\eqref{eq:reverseSidorenko} gives another example where triangle-freeness of $H$ is not necessary.
\begin{theorem}[Theorem~1.7~in~\cite{sah2020reverse}]\label{thm:SSSZ_q}
    For any $d$-regular graph $H$ and $q\geq 2$,
    \[
        \hom(H,K_q)^{1/v(H)}\leq \hom(K_{d,d},K_q)^{1/(2d)}.
    \] 
\end{theorem}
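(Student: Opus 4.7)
The strategy mirrors the two-step bound~\eqref{eq:indep_count_in_d-reg}: first reduce from the non-bipartite $d$-regular graph $H$ to the bipartite double cover $H\times K_2$ via a swapping trick, then apply the triangle-free reverse Sidorenko inequality (\Cref{thm:SSSZ}) to $H\times K_2$ with $G=K_q$.

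The first step is to establish
\[
\hom(H, K_q)^2 \leq \hom(H\times K_2, K_q)
\]
for any graph $H$ and $q\geq 2$. I would adapt Zhao's bipartite swapping trick to $K_q$ as follows. Given two proper $q$-colourings $\phi,\psi$ of $H$ and a subset $S\subseteq V(H)$, define $h_S\colon V(H)\times\{0,1\}\to [q]$ by $h_S(v,0)=\phi(v)$ and $h_S(v,1)=\psi(v)$ when $v\notin S$, and swap the two values when $v\in S$. On an edge $(u,0)(v,1)$ of $H\times K_2$, the homomorphism condition $h_S(u,0)\neq h_S(v,1)$ reduces to one of $\phi(u)\neq\psi(v)$, $\phi(u)\neq\phi(v)$, $\psi(u)\neq\psi(v)$, or $\psi(u)\neq\phi(v)$, depending on which of $u,v$ lie in $S$; the middle two hold automatically by properness of $\phi$ and $\psi$. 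A double counting of triples $((\phi,\psi),S)$ that yield valid homomorphisms, weighted against the number of preimages of each $h\in \Hom(H\times K_2, K_q)$, should produce the desired bound.

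Once this is in hand, observe that $H\times K_2$ is bipartite, hence triangle-free, and $d$-regular on $2v(H)$ vertices. Applying \Cref{thm:SSSZ} to $H\times K_2$ and $G=K_q$ gives
\[
\hom(H\times K_2, K_q)^{1/(2v(H))} \leq \hom(K_{d,d},K_q)^{1/(2d)},
\]
which combined with the swapping inequality yields
\[
\hom(H,K_q)^{1/v(H)} = \bigl(\hom(H,K_q)^2\bigr)^{1/(2v(H))} \leq \hom(H\times K_2,K_q)^{1/(2v(H))} \leq \hom(K_{d,d},K_q)^{1/(2d)}.
\]

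The main obstacle is the swapping inequality. The candidate map $h_S$ is a homomorphism of $H\times K_2$ precisely when $S$ cuts every ``bad'' edge $uv$ with $\phi(u)=\psi(v)$ or $\phi(v)=\psi(u)$, which forces the bad-edge subgraph to be bipartite. Zhao's original argument for $G=\indepg$ sidesteps this by exploiting the natural order $0<1$ on the two colours of $\indepg$, an asymmetry that is absent for $K_q$. Overcoming the difficulty is likely to require a probabilistic averaging over $S$ combined with an estimate that exploits the antiferromagnetic spectrum $\{q-1,-1,\dots,-1\}$ of the adjacency matrix of $K_q$, or a direct entropy or Lorentzian-polynomial argument in the spirit of the techniques developed later in the paper.
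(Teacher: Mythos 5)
Your proposed first step---proving the \emph{bipartite swapping inequality}
\[
\hom(H,K_q)^2 \leq \hom(H\times K_2,K_q)
\]
for all graphs $H$ and all $q\geq 2$---is precisely Conjecture~\ref{conj:zhao} (Zhao's Conjecture~5.1 from~\cite{zhao2011bipartite}), which remains open. The paper explicitly flags this: \enquote{it remains unknown whether the inequality obtained by replacing $\indepg$ by $K_q$ in the first inequality in~\eqref{eq:indep_count_in_d-reg} is true.} So the two-step reduction you outline is known to the community and is exactly the strategy that does \emph{not} yet yield a proof of Theorem~\ref{thm:SSSZ_q}. Your second step (applying Theorem~\ref{thm:SSSZ} to the bipartite graph $H\times K_2$) is fine, but the whole burden falls on step one, and you correctly diagnose the obstruction yourself: the swap map $h_S$ is a valid homomorphism of $H\times K_2$ only when $S$ cuts every bad edge, which requires the bad-edge subgraph to be bipartite, and the $0<1$ asymmetry that rescues Zhao's original argument for $G=\indepg$ is unavailable for $K_q$. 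None of the remedies you suggest (probabilistic averaging over $S$, exploiting the spectrum $\{q-1,-1,\dots,-1\}$, entropy, Lorentzian polynomials) is known to close this gap for general $H$; indeed, the present paper's own contribution (Theorems~\ref{thm:main} and~\ref{thm:main2}) is to establish the swapping inequality only for special families of $H$, such as blow-ups of bipartite graphs, not for all $d$-regular $H$.

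The actual proof of Theorem~1.7 in~\cite{sah2020reverse} does not go through the bipartite swapping reduction at all; it is a direct argument that handles $K_q$ (and more generally complete graphs possibly with loops) without first reducing to the bipartite cover. The paper also notes the historical difficulty: before~\cite{sah2020reverse}, only the $d=3$ case of Theorem~\ref{thm:SSSZ_q} was known~\cite{davies2018potts}, which underscores that a short swapping argument of the kind you sketch cannot be what the cited proof does. In summary, the proposal has a genuine gap: it reduces the theorem to an open conjecture, and the gap is not a technicality but the entire content of the intended step.
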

\noindent That is, complete bipartite graphs have maximum number of $q$-colourings amongst $d$-regular graphs. In fact, \cite[Theorem~1.7]{sah2020reverse} proves more general statement that allows some graphs other than $K_q$, which will be discussed later with~\Cref{conj:SSSZ}, and distinct degrees in~$H$.
It is worth mentioning that, before~\cite{sah2020reverse}, the case $d=3$ proven in~\cite{davies2018potts} was the only known case, which highlights the significance of the result and the difficulty of proving such inequalities.

One might wonder whether \Cref{thm:SSSZ_q} follows from an analogous strategy to the Kahn--Zhao theorem. More precisely, can we simply replace {\indepg} by $K_q$ in the two inequalities~\eqref{eq:indep_count_in_d-reg}, given that the second already holds by~\Cref{thm:SSSZ}?
In fact, it remains unknown whether the inequality obtained by replacing {\indepg} by $K_q$ in the first inequality in~\eqref{eq:indep_count_in_d-reg} is true, although Zhao~\cite{zhao2011bipartite} conjectured so more than a decade ago.
\begin{conjecture}[Conjecture~5.1 in~\cite{zhao2011bipartite}]\label{conj:zhao}
    For any graph $H$ and $q\geq 2$,
    \begin{equation}\label{eq:bipartite_swap_q}
        \hom(H,K_q)^2 \leq \hom(H\times K_2,K_q).
    \end{equation}
\end{conjecture}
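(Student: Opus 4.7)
The plan is to reformulate the inequality as a statement about cross-proper color pairs and attack it via the Lorentzian polynomial framework of Br\"and\'en--Huh, accepting that a complete proof for all $H$ may require new ideas beyond the special cases currently accessible. First I reformulate: a homomorphism $\phi : H \times K_2 \to K_q$ is equivalently a pair $(f, g)$ of maps $V(H) \to [q]$ satisfying $f(u) \neq g(v)$ for every $uv \in E(H)$, which I will call a \emph{cross-proper} pair. Then $\hom(H, K_q)^2$ counts pairs of proper colorings, while $\hom(H \times K_2, K_q)$ counts cross-proper pairs. For bipartite $H$ the two counts agree, since $H \times K_2$ splits into two copies of $H$ and every proper pair is automatically cross-proper with respect to the bipartition; the content of the conjecture therefore lies in the case where $H$ contains an odd cycle, and it amounts to producing an injection, or a suitable weighted comparison, from proper pairs into cross-proper pairs.

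Next, I would attach to $H$ the multivariate generating polynomial
\[
P_H(x_1, \dots, x_q) = \sum_{f \text{ proper}} \prod_{v \in V(H)} x_{f(v)},
\]
so that $P_H(1, \dots, 1) = \hom(H, K_q)$, and observe that $\hom(H \times K_2, K_q)$ can be expressed as a bilinear combination of the coefficients of $P_H$ tracked by the shared color profile of the two copies. If $P_H$ (or a judicious relative of it) is Lorentzian, then the strong log-concavity of its normalized coefficients, combined with a Cauchy--Schwarz-style averaging over color profiles, yields the desired quadratic bound. The natural bridge is log-concavity of list colorings of bipartite graphs: by cutting $H$ along a carefully chosen edge set, one reduces to a bipartite skeleton equipped with list constraints, and the Lorentzian machinery — stable under products, derivatives, and linear specializations — can plausibly be brought to bear to transport log-concavity from the bipartite skeleton back to $H$.

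The main obstacle is that $P_H$ is not Lorentzian in general: one can already check failure on small odd cycles, so there is no uniform route via a single polynomial. Consequently the realistic attack restricts the class of $H$ — for example, to graphs obtained by blowing up vertices of a bipartite graph into complete graphs, complete multipartite graphs, paths, or even cycles — for which a controllable Lorentzian structure survives and one can induct over the blow-up operation, reducing the problem in each step to a log-concavity statement for list colorings of a bipartite graph. A fully general proof will likely require either a genuinely new polynomial invariant encoding $\hom(H, K_q)$ while remaining Lorentzian for all $H$, or an entirely different mechanism (for example an entropy or correlation inequality tailored to the bipartite double cover) to handle the non-bipartite contribution uniformly.
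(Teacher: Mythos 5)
The statement you were asked to prove is a \emph{conjecture}, not a theorem: it is Conjecture~5.1 of Zhao (2011), restated in this paper as \Cref{conj:zhao}, and the paper explicitly states that it remains open (``there has been no progress on the conjecture since then, to the best of our knowledge''). There is therefore no proof of it in the paper to compare your attempt against. The paper's contribution is only partial progress: \Cref{thm:main2} establishes~\eqref{eq:bipartite_swap_q} when $H$ is an $\HH$-blow-up of a bipartite graph with $\HH$ consisting of complete multipartite graphs, even cycles, and paths, and \Cref{thm:main} establishes it (for all antiferromagnetic $G$, not just $K_q$) when $H$ is a $\K$-blow-up of a bipartite graph into complete graphs.

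Your proposal correctly recognizes that a full proof is out of reach and accurately describes exactly the strategy of the paper. Your $P_H$ is the $K_q$-chromatic function $h_H(\bx;K_q)$; your observation that $H\times K_2$ homomorphisms are ``cross-proper'' pairs $(f,g)$ is the $\bhom(H\times K_2, K_q[A,B])$ bookkeeping behind the cross-bipartite swapping definition; and your plan to restrict to blow-ups of bipartite graphs and reduce, via the Lorentzian/log-concavity machinery applied to list colorings, is precisely the reduction in \Cref{thm:BS-H-blow-up} and \Cref{sec:SBS-in-Kq}. Two small points: the paper's counterexample to Lorentzianity of $h_H(\bx;K_q)$ is $H=K_{3,3}$ at $q=3$ (verified by computer), not a small odd cycle; indeed the paper notes that odd cycles $H$ do satisfy~\eqref{eq:bipartite_swap_q} by a direct spectral argument, so they are not a source of failure here. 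In short, what you have written is not a proof and could not have been one, but it is a faithful account of the state of the problem and of the paper's methodology; there is no gap to point to beyond the one you openly acknowledge.
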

As $H\times K_2$ is always bipartite, \Cref{conj:zhao} combined with~\Cref{thm:SSSZ} implies~\Cref{thm:SSSZ_q}.\footnote{It also implies the generalisation of~\Cref{thm:SSSZ_q} for irregular graphs, which we do not state here for simplicity.}
If $H$ is bipartite, then $H\times K_2$ is isomorphic to two vertex-disjoint copies of $H$, so the equality always holds in~\eqref{eq:bipartite_swap_q}.
Zhao~\cite{zhao2011bipartite} settled the conjecture for large enough $q=q(H)$ for every fixed $H$, but there has been no progress on the conjecture since then, to the best of our knowledge.

Our first main result is to find new graphs $H$ that satisfy~\eqref{eq:bipartite_swap_q} for arbitrary $q\geq 2$.
Let $\HH$ be a class of graphs.
Then a graph $H$ is an \emph{$\HH$-blow-up} of a $k$-vertex graph $F$ if it is obtained by replacing all vertices $v_1,v_2,\dots,v_k$ of $F$ by $H_1,H_2,\dots, H_k$ in $\HH$, respectively, and the edges of the form $v_iv_j$ by a complete bipartite graph $K_{s,t}$, where $s=v(H_i)$ and $t=v(H_j)$.
\begin{theorem}\label{thm:main2}
    Let $\HH$ be the class of graphs that consists of all complete multipartite graphs, even cycles, and paths. If $H$ is an $\HH$-blow-up of a bipartite graph, then for any $q\geq 2$,
    \[
        \hom(H,K_q)^2 \leq \hom(H\times K_2,K_q).
    \]
\end{theorem}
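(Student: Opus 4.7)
The plan is to reduce the desired inequality to a local log-concavity statement at each building block $H_v\in\mathcal{H}$, which is then established via the Br\"and\'en--Huh theory of Lorentzian polynomials combined with a log-concavity result for list colourings of bipartite graphs.

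First I would reformulate both sides using a bipartition $V(F)=A\sqcup B$. For each $v\in V(F)$ and $L\subseteq[q]$, let $p(H_v,L)$ denote the number of proper colourings $V(H_v)\to L$ of $H_v$, and for $L,L'\subseteq[q]$ let $q(H_v,L,L')$ denote the number of pairs $(\phi,\psi)\colon V(H_v)\to L\times L'$ satisfying $\phi(x)\neq\psi(y)$ for every $xy\in E(H_v)$. Since $A$ is an independent set of $F$, conditioning on the colouring of the $A$-blocks (together with a ``swap'' on the $B$-side of the tensor product for $H\times K_2$) yields
\[
\hom(H,K_q)^2 = \sum_{(\chi_A,\chi'_A)}\prod_{v\in B} p(H_v,L_v(\chi_A))\,p(H_v,L_v(\chi'_A)),
\]
\[
\hom(H\times K_2,K_q) = \sum_{(\Phi_0,\Phi_1)_A}\prod_{v\in B} q(H_v,L_v(\Phi_{0,A}),L_v(\Phi_{1,A})),
\]
where $L_v(\chi_A)=[q]\setminus\bigcup_{u\sim_F v}\chi_u(V(H_u))$ and the outer sums range over pairs of proper $A$-block colourings (LHS) or pairs whose restriction to each $H_u$, $u\in A$, is a homomorphism $H_u\times K_2\to K_q$ (RHS).

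The crux is the local inequality $p(H_v,L)\,p(H_v,L')\leq q(H_v,L,L')$ for each $H_v\in\mathcal{H}$ and $L,L'\subseteq[q]$. When $H_v$ is bipartite with parts $X,Y$ (path or even cycle), a bijection rewrites $q(H_v,L,L')=N(H_v;L,L')\,N(H_v;L',L)$, where $N(H_v;L_X,L_Y)$ counts proper colourings of $H_v$ with list $L_X$ on $X$ and $L_Y$ on $Y$; the local inequality then reduces to the bipartite list-colouring log-concavity
\[
N(H_v;L,L)\,N(H_v;L',L')\leq N(H_v;L,L')\,N(H_v;L',L),
\]
which I would prove by constructing a polynomial whose coefficients encode $N$ and verifying Lorentzianness via the Br\"and\'en--Huh criteria (M-convex support and Hessian signature), using a transfer-matrix induction along the path or cycle. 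For complete multipartite $H_v$, the multivariate chromatic polynomial decomposes through elementary symmetric polynomials on disjoint variable sets, so the local inequality follows from their classical Lorentzianness.

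Finally, combining the local inequalities via a Cauchy--Schwarz step on the $A$-side, together with the base case of the theorem for each $H_u$ (trivial for bipartite blocks, and reducing to the complete multipartite Lorentzian computation otherwise), yields the global inequality. The main obstacle is establishing the bipartite list-colouring log-concavity for paths and even cycles: although the Lorentzian framework is the right language, verifying the Hessian signature condition for the transfer-matrix polynomial at each inductive step requires delicate analysis and is the most technically demanding part of the argument.
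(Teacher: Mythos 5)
Your global strategy---conditioning on the blocks coming from one side of the bipartition of $F$ and reducing the inequality to a local ``cross-bipartite swapping'' statement for each block in $\HH$---is broadly in the spirit of the paper's proof, and you correctly identify the crucial local inequality as a list-colouring log-concavity statement $N(H_v;L,L)\,N(H_v;L',L')\leq N(H_v;L,L')\,N(H_v;L',L)$. However, there are two substantive issues with the proposal.

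First, the combining step is not worked out, and as stated it has a structural mismatch. Conditioning on \emph{all} of $A$ at once makes the LHS a sum over pairs $(\chi_A,\chi'_A)$ of proper colourings of $\bigsqcup_{u\in A}H_u$, whereas the RHS is a sum over colourings of $\bigsqcup_{u\in A}(H_u\times K_2)$; when some $H_u$ on the $A$-side is a non-bipartite complete multipartite graph, $H_u\sqcup H_u\not\cong H_u\times K_2$ and the outer index sets differ, so a simple Cauchy--Schwarz or termwise comparison cannot close the argument. The paper avoids this by processing one block $v_i$ at a time: it defines an intermediate family of graphs $\swapped{H}{U}$ interpolating between $H\sqcup H$ and $H\times K_2$, shows $\hom(\swapped{H}{U},G)$ is monotone in $U$ using the cross-bipartite swapping property of the single block being modified, and uses bipartiteness of $F$ only at the very end to get $\swapped{H}{V(F)}\cong H\times K_2$. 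This sidesteps the $A$-side/$B$-side asymmetry entirely.

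Second, and more seriously, the proposed route to the local inequality for complete multipartite blocks does not work. You claim it ``follows from their classical Lorentzianness'' after a decomposition into elementary symmetric polynomials, but the paper explicitly reports a computer verification that $h_{K_{3,3}}(\bx;K_3)$ is \emph{not} Lorentzian, and concludes that the cross-bipartite swapping property for complete multipartite graphs in $K_q$ cannot be obtained via Lorentzian polynomials directly. The paper instead establishes this inequality by an entirely combinatorial chain of log-submodularity reductions (Lemma~6.4 and Claims therein), relying on log-concavity of Stirling numbers of the second kind and two telescoping arguments. Likewise, for paths and even cycles the paper does not attempt a Lorentzian/transfer-matrix Hessian-signature argument; it computes $N(H;a,b)$ explicitly as a sum of terms $T(a,b,\ell)$ (via Lemma~6.3) and verifies the log-concavity directly via Proposition~6.2. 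The paper even remarks that $h_H(\bx;G)$ fails to be Lorentzian for the two-edge path and some $3$-vertex antiferromagnetic $G$, further cautioning against the Lorentzian-polynomial route for these non-complete blocks. So your identification of ``the Lorentzian framework is the right language'' for the local inequalities is, in fact, the step the paper explicitly shows to be a dead end for this theorem (in contrast to Theorem~1.6, where the Lorentzian approach does go through because the blocks are complete graphs).
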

Complete $1$-partite graphs are empty graphs on isolated vertices, so an $\HH$-blow-up above also includes the blow-ups in the usual sense. In particular, blowing up a vertex by $K_1$ keeps it unchanged.
Our proof uses `log-submodularity' inequalities for certain list colourings of complete multipartite graphs, even cycles, and paths. In fact, complete graphs satisfy much stronger inequalities than these, which we will revisit with the second main result.

\medskip

Having seen the similarities between the Kahn--Zhao theorem and~\Cref{thm:SSSZ_q}, one may wonder whether there is a common generalisation of \(G=\indepg\) and \(G=K_q\) that allows us to obtain an inequality of the form~\eqref{eq:reverseSidorenko} for arbitrary $d$-regular graphs $H$, not necessarily triangle-free.
Sah, Sawhney, Stoner, and Zhao conjectured that $G$, as a symmetric matrix, having at most one positive eigenvalue may be the correct condition to add on.
\begin{conjecture}[Conjecture~1.16 in~\cite{sah2020reverse}]\label{conj:SSSZ}
    Let $H$ be a $d$-regular graph and let $G$ be a graph possibly with loops that has at most one positive eigenvalue. Then
    \[
        \hom(H,G)^{1/v(H)} \leq \hom(K_{d,d},G)^{1/(2d)}.
    \]
\end{conjecture}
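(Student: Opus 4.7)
The plan is to reduce the conjecture, via a \emph{bipartite swapping} inequality, to the triangle-free case already established as~\Cref{thm:SSSZ}. Suppose we could prove
\begin{equation}\label{eq:plan_swap}
    \hom(H,G)^2 \leq \hom(H \times K_2, G)
\end{equation}
for every $d$-regular $H$ and every antiferromagnetic $G$. Since $H \times K_2$ is bipartite (hence triangle-free) and inherits $d$-regularity from $H$, applying~\Cref{thm:SSSZ} to $H \times K_2$ yields $\hom(H \times K_2, G)^{1/(2v(H))} \leq \hom(K_{d,d},G)^{1/(2d)}$, and combining with the $2v(H)$-th root of~\eqref{eq:plan_swap} gives the desired inequality $\hom(H,G)^{1/v(H)} \leq \hom(K_{d,d},G)^{1/(2d)}$. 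Thus the problem is entirely reduced to~\eqref{eq:plan_swap}.

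To tackle~\eqref{eq:plan_swap}, observe that both sides are sums over pairs $(\phi,\psi)$ of maps $V(H) \to V(G)$:
\[
    \hom(H,G)^2 = \sum_{\phi,\psi}\prod_{uv \in E(H)} A_G(\phi(u),\phi(v))\,A_G(\psi(u),\psi(v)),
\]
whereas $\hom(H \times K_2, G)$ is the analogous sum with the ``crossed'' edge-weight $A_G(\phi(u),\psi(v))\,A_G(\phi(v),\psi(u))$ on each edge of $H$. The antiferromagnetic hypothesis on $G$---at most one positive eigenvalue---encodes a hidden log-concavity: after peeling off the Perron eigendirection, the quadratic form $x^\top A_G x$ is negative semidefinite. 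My plan is to encode the generating function of weighted pair-maps as a multivariate polynomial and certify its Lorentzian property in the sense of Br\"and\'en--Huh; log-concavity of the coefficients of a Lorentzian polynomial then delivers~\eqref{eq:plan_swap} as an inequality between two specific coefficients related by an edge-swap.

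The principal obstacle is that this Lorentzian certification does not go through for arbitrary $H$. A workable first target is $H = K_{d+1}$, exploiting that the list-colouring generating polynomials of complete graphs behave particularly well under antiferromagnetic weightings. More generally one can treat $H$ obtained by blowing up the vertices of a bipartite graph $F$ by complete graphs: the bipartite backbone $F$ decouples the variables across its bipartition, and the complete-graph pieces contribute the required Lorentzian factors, yielding a genuine new case of the conjecture. Closing it in full, however, would require a uniform log-concavity statement for list-colouring generating polynomials of \emph{arbitrary} bipartite graphs, which appears beyond the current reach of the Lorentzian framework and is the essential difficulty.
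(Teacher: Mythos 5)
The statement in question is an open conjecture (Conjecture~1.16 of Sah--Sawhney--Stoner--Zhao); the paper does not prove it, so there is no paper proof to compare against. What the paper establishes is partial progress: \Cref{thm:main} confirms \Cref{conj:SSSZ} for $H$ a $\mathcal{K}$-blow-up of a bipartite graph (in particular for $H=K_{d+1}$), and the concluding remarks isolate \Cref{conj:bipartite_swap} as the remaining gap. Your proposal correctly reproduces both halves of that strategy. The reduction to a bipartite-swapping inequality $\hom(H,G)^2\le\hom(H\times K_2,G)$, together with \Cref{thm:SSSZ} applied to the triangle-free $d$-regular graph $H\times K_2$, is precisely the reduction the paper records as \Cref{conj:bipartite_swap}, and your plan to establish the swapping inequality by certifying a Lorentzian property for the weighted pair-count generating function is exactly the paper's technical engine: \Cref{thm:AFM-hom-Lor} (Lorentzianity of $h_{K_t}(\blank;G)$ for antiferromagnetic $G$), the Alexandrov--Fenchel-type \Cref{cor:G-vol}, and \Cref{thm:G-vol-ineq-KtxK2}, which together deliver \Cref{thm:main}.

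You also diagnose the obstruction accurately. The Lorentzian certification genuinely fails for $h_H(\blank;G)$ when $H$ is not complete---the paper's concluding remarks exhibit a $3$-vertex antiferromagnetic $G$ for which $h_H(\blank;G)$ is not Lorentzian when $H$ is the $2$-edge path, even though $h_H(\blank;K_q)$ is Lorentzian for every $q$---so no uniform Lorentzian argument of this type can deliver the swapping inequality for arbitrary $d$-regular $H$. In short, your proposal is not a proof of the conjecture (nor do you claim one); it is an accurate reconstruction of the paper's strategy for partial progress and a correct identification of where the essential difficulty remains.
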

In~\cite{sah2020reverse}, the conjecture is confirmed for the particular case of \emph{semiproper colourings}, i.e., complete graphs $G$ possibly with loops. This can be seen as a common generalisation of {\indepg} and $K_q$, albeit weaker than the conjecture itself.

Identifying a graph $G$ by its adjacency matrix naturally generalises to an arbitrary symmetric matrix with nonnegative entries, or equivalently a \emph{weighted graph}, whose edges are weighted by the corresponding entry in the symmetric matrix. Then the homomorphism count $\hom(H,G)$ is also weighted in the sense that
\[
    \hom(H,G) = \sum_{\phi\in\Hom(H,G)}\prod_{uv\in E(H)}G(\phi(u),\phi(v)),
\]
where $G(x,y)$ denotes the corresponding entry of the edge $xy\in E(G)$.
In fact, \Cref{conj:SSSZ} was already stated in terms of weighted graphs $G$  in~\cite{sah2020reverse}.

A good motivation to consider weighted graphs with at most one positive eigenvalue as a common generalisation of {\indepg} and $K_q$ comes from statistical physics, where such a weighted graph is called \emph{antiferromagnetic}, a term that originates from the fundamental Ising and Potts models~\cite{galanis2014inapproximability}.
To elaborate, the partition functions of antiferromagnetic Potts models correspond to $\hom(H,G)$ with an antiferromagnetic matrix $G$ with $e^{-\beta}$ on the diagonal and $1$ elsewhere.
Furthermore, given a suitable adjustment $G$ of the adjacency matrix of $\indepg$\hspace{0.24mm}, $\hom(H,G)$ corresponds to the partition function of hard-core model with rational fugacity $\lambda$, which approximates arbitrary hard-core models.

In what follows, an antiferromagnetic graph always means a weighted one.
Our second main result is a homomorphism inequality that generalises~\eqref{eq:bipartite_swap_q} for arbitrary antiferromagnetic target graphs $G$ instead of $K_q$.
\begin{theorem}\label{thm:main}
    Let $G$ be an antiferromagnetic graph and let $\K$ be the class of all complete graphs. If $H$ is a $\K$-blow-up of a bipartite graph $F$, then
    \begin{equation}\label{eq:main}
     \hom(H,G)^{2}\leq \hom(H\times K_2,G).
    \end{equation}
\end{theorem}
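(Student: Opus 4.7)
The strategy is to combine Zhao's bipartite swap trick with a local polynomial inequality for cliques, proved via the Lorentzian polynomial framework of Br\"and\'en and Huh. Fix a bipartition $V(F) = A \sqcup B$, and let $V_A, V_B$ denote the induced partition of $V(H)$ into cliques. For each $n \geq 1$, introduce
\[
p_n(\mathbf{x}) = \sum_{\phi\colon V(K_n)\to V(G)} \prod_{\{u,v\}\in E(K_n)} G(\phi(u),\phi(v)) \prod_u x_{\phi(u)}
\]
and the bihomogeneous ``crown'' counterpart
\[
\mathcal{Q}_n(\mathbf{x}, \mathbf{y}) = \sum_{\phi^0,\phi^1\colon V(K_n)\to V(G)} \prod_{u\ne v} G(\phi^0(u),\phi^1(v)) \prod_u x_{\phi^0(u)} y_{\phi^1(u)},
\]
of degree $n$ and bidegree $(n,n)$ respectively. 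The plan is to reduce \eqref{eq:main} to the \emph{clique inequality}
\[
p_n(\mathbf{x})\, p_n(\mathbf{y}) \,\le\, \mathcal{Q}_n(\mathbf{x}, \mathbf{y}) \qquad \text{for all } n \text{ and all } \mathbf{x}, \mathbf{y} \in \RR_{\ge 0}^{V(G)}, \qquad (\star)
\]
and then to prove $(\star)$ via Lorentzian polynomials.

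\textbf{Reducing to $(\star)$.} Apply Zhao's bipartite swap exchanging $(\phi^0|_{V_A}, \phi^1|_{V_A})$ within each pair $(\phi^0,\phi^1)\in\Hom(H\times K_2, G)$: the crown factor $\prod_{u\ne v}G(\phi^0(u),\phi^1(v))$ on each super-clique is symmetric in its two arguments hence invariant, while the bipartite-edge contributions factor cleanly into a product over the two copies $\phi^0$ and $\phi^1$. Next integrate out the $A$-side: for each $v_i\in A$, summing the restrictions of $(\phi^0,\phi^1)$ to $K_{n_i}$ against nonneg boundary vectors $\mathbf{x}^{(i)}, \mathbf{y}^{(i)} \in \RR^{V(G)}_{\ge 0}$ determined by the values on adjacent $B$-super-vertices and the $G$-weights yields a factor $p_{n_i}(\mathbf{x}^{(i)}) p_{n_i}(\mathbf{y}^{(i)})$ on the $\hom(H,G)^2$-side and $\mathcal{Q}_{n_i}(\mathbf{x}^{(i)}, \mathbf{y}^{(i)})$ on the $\hom(H\times K_2, G)$-side. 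Pointwise application of $(\star)$ at every $A$-super-vertex, followed by re-expanding each $\mathcal{Q}_{n_i}$ back into a sum and integrating out the $B$-side in the same way, converts the remaining $\prod_j \tilde{C}_{n_j}\tilde{C}_{n_j}$ clique factors into $\prod_j p_{n_j}p_{n_j}$ with new nonneg boundary vectors; a second application of $(\star)$ at every $B$-super-vertex and reassembly then recovers exactly the post-swap form of $\hom(H \times K_2, G)$.

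\textbf{Proving $(\star)$.} The base case $n=2$ reads $(\mathbf{x}^\top G\mathbf{x})(\mathbf{y}^\top G\mathbf{y})\le (\mathbf{x}^\top G\mathbf{y})^2$, which is the classical reverse Cauchy--Schwarz for symmetric matrices with at most one positive eigenvalue in their forward cone---which contains $\RR^{V(G)}_{\ge 0}$ since $G$ has nonneg entries. For $n\ge 3$, the aim is to show that $p_n(\mathbf{x})$ is Lorentzian in the sense of Br\"and\'en--Huh whenever $G$ is antiferromagnetic: every $(n-2)$-fold directional derivative of $p_n$ along nonneg directions has Hessian with at most one positive eigenvalue. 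An inductive analysis of $\partial_{\mathbf{v}}p_n$ in terms of Hadamard products against matrices of the form $G\,D(\mathbf{w})\,G$ propagates the at-most-one-positive-eigenvalue property by exploiting the rank-one Perron--Frobenius structure of the positive part of $G$ together with its non-positive residue. Once Lorentzianity is secured, $(\star)$ follows from a polarization/Alexandrov--Fenchel argument identifying $\mathcal{Q}_n(\mathbf{x},\mathbf{y})$ with an appropriate mixed evaluation of the polarized multilinear form of $p_n$.

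\textbf{Main obstacle.} The principal technical hurdle is establishing Lorentzianity of $p_n$ for all $n\ge 3$, since iterated Hadamard products of nonneg matrices can in principle accumulate positive eigenvalues, and preserving the single-positive-eigenvalue condition through the recursion requires a careful structural argument rooted in the antiferromagnetic hypothesis on $G$. Once this is in place, the conversion of Lorentzianity into the bilinear inequality $(\star)$ via Alexandrov--Fenchel is comparatively routine, and the bipartite swap reduction then delivers Theorem~\ref{thm:main}.
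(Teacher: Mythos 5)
Your high-level plan matches the paper's almost exactly: you define the $G$-chromatic function $p_n$ (the paper's $h_{K_n}(\bx;G)$), the crown polynomial $\mathcal{Q}_n$ (the paper's $V_{K_n\times K_2}$ with all $\bx$'s on one side and $\by$'s on the other), you isolate the clique inequality $(\star)$ (which is precisely the paper's Theorem~\ref{thm:G-vol-ineq-KtxK2} stating that complete graphs are weighted cross-bipartite swapping), and you reduce the theorem to $(\star)$ by swapping the $A$-side and then the $B$-side of the bipartition, which is the paper's $\swapped{H}{U}$-chain (Lemmas~\ref{lem:2-lifts_hom-comparison}, \ref{lem:2-lift-iso}, Theorem~\ref{thm:BS-H-blow-up_weighted}) grouped into two batches instead of one super-vertex at a time. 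That reduction is sound.

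There are, however, two genuine gaps in your proof of $(\star)$. First, your characterization of Lorentzianity is incomplete. In the Br\"and\'en--Huh definition, besides the requirement that iterated partials have Hessians with at most one positive eigenvalue, the support of the polynomial must be \Mc-convex, and this is not automatic. For $p_n = h_{K_n}(\bx;G)$ this is a real obstacle: the paper devotes \Cref{thm:AFM-gph-equiv} to a structural characterization of antiferromagnetic graphs (blow-ups of $K_q$ or $K_q^\circ$ plus induced subgraphs) precisely so that \Mc-convexity can be verified on $K_q^\circ$ and then transported via \Cref{lem:G-vol-Lor-preserved-by-G-blowup-indsubgp}. Your sketch does not touch this, and the Hadamard-product/Perron--Frobenius argument you allude to for the Hessian part is also shakier than the paper's route, which observes that $\partial_\nu p_n = n\cdot p_{n-1}(G(1,\nu)x_1,\dots,G(n,\nu)x_n)$ and invokes closure of Lorentzian polynomials under nonnegative diagonal rescaling, avoiding any eigenvalue tracking of Hadamard products.

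Second, the final step ``identifying $\mathcal{Q}_n(\bx,\by)$ with an appropriate mixed evaluation of the polarized multilinear form of $p_n$'' is incorrect: $\mathcal{Q}_n$ is bihomogeneous of total degree $2n$ in its variables (it counts $K_n\times K_2$, a graph on $2n$ vertices), while any mixed evaluation $V_{K_n}(\bz_1,\dots,\bz_n)$ of the polarization of $p_n$ has degree $n$; the two objects are not the same and the Alexandrov--Fenchel inequality (\Cref{prop:Lor-AF-ineq}) alone does not deliver $(\star)$. What the paper actually does is establish the chain $\mathcal{Q}_t(\ba,\bb) \ge V_{K_t}(\ba,\dots,\ba,\bb;G)\,V_{K_t}(\bb,\dots,\bb,\ba;G)\ge p_t(\ba)p_t(\bb)$: the second inequality is the iterated Alexandrov--Fenchel bound (\Cref{cor:G-vol}), but the first requires an independent induction on $t$ (the proof of \Cref{thm:G-vol-ineq-KtxK2}, which fixes $\phi(t,1)=r$, $\phi(t,2)=s$, rewrites the fixed-endpoint sum as a $K_{t-1}\times K_2$ volume with rescaled weight vectors $\ba^{(s)},\bb^{(r)}$, and applies the inductive hypothesis pointwise). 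You would need to supply this embedding-and-rescaling recursion; it is not a consequence of polarization.
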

In particular, if $F=K_1$, then \eqref{eq:main} gives $\hom(K_d,G)^2\leq \hom(K_d\times K_2,G)$, which is already a new result that confirms~\Cref{conj:SSSZ} for complete graphs $H$.
For another example, $H=K_{r+s}\setminus K_{s}$ is a $\K$-blow-up of $F=K_{r,1}$, so it also satisfies~\eqref{eq:main}.
Recall that, due to~\Cref{thm:SSSZ}, it is enough to prove~\Cref{conj:SSSZ} for $H$ that contains a triangle. \Cref{thm:main} is the first result that confirms~\Cref{conj:SSSZ} for graphs $H$ that contain triangles and arbitrary antiferromagnetic graphs~$G$. As $\hom(H,G)$ for antiferromagnetic graphs $G$ generalises the partition functions of Potts/Ising or of hard-core model,~\Cref{thm:main} obtains a variety of new inequalities.

Antiferromagnetism also appears in mathematical contexts with different names and their generalisations. For example, antiferromagnetism for an $n$-vertex graph $G$ is equivalent to `hyperbolicity' of its $n\times n$ adjacency matrix $A=A_G$, i.e., $(\bx^T A\bx)(\by^T A\by)\leq (\bx^T A\by)^2$ for all $\bx,\by\in\RR^n$.
As a higher-order generalisation of this, Br\"and\'en and Huh~\cite{branden2020lorentzian} developed the theory of \emph{Lorentzian polynomials}, which applies to settle several old and new conjectures that relate to log-concavity of combinatorial and geometric sequences. In particular, the theory explains why log-concavity appears in the mixed volumes and mixed discriminants in Euclidean spaces and, as a consequence, recovers the classical Alexandrov--Fenchel inequality~\cite[Section~7.3]{schneider2014convex}.

Our main idea in proving~\Cref{thm:main} is to apply this emerging theory of Lorentzian polynomials. First, we find a new class of Lorentzian polynomials given by homomorphic counts for $K_d$. We then obtain a `discrete' analogue of the Alexandrov--Fenchel inequality as a corollary, which is a key tool in proving~\Cref{thm:main}.
To the best of our knowledge, this is the first application of the theory of Lorentzian polynomials in extremal combinatorics.
Although we do not explicitly use Lorentzian polynomials in the proof of~\Cref{thm:main2}, the argument still follows the spirit of the theory in the sense that
we make use of an Alexandrov--Fenchel--type correlation inequality for list colourings of paths, cycles, and complete multipartite graphs.

This paper is organised as follows. In~\Cref{sec:prelim}, some standard results on graph homomorphisms and the theory of Lorentzian polynomials are introduced.
In~\Cref{sec:G_chromatic}, we obtain a new family of Lorentzian polynomials that come from homomorphic $K_t$-counts.
As a consequence,~\Cref{thm:main} follows in the subsequent section.
Throughout~\Cref{sec:BS,sec:SBS-in-Kq}, we prove~\Cref{thm:main2} by using log-submodularity inequalities inspired by the Alexandrov--Fenchel-type techniques used in the previous sections.

\section{Preliminaries}\label{sec:prelim}
In what follows, \(n\), \(q\), and \(t\) are positive integers.
Write \(\NN_0\defeq \{0,1,2,\dots\}\) for the set of nonnegative integers and let \([n]\defeq \{1,2,\dots,n\}\).
For a finite set \(I\), let \(\binom{I}{r}\) be the collection of its subsets of size~$r$.

\paragraph{Graphs.}
A \emph{graph} is assumed to have a non-empty edge set, unless specified otherwise; it has no parallel edges but may have loops, especially if it is identified as a symmetric matrix. We use $v(G)$ and $e(G)$ for the number of vertices and edges of a graph $G$, respectively.
A \emph{weighted graph} \(G\) is a graph with positive edge weights on edges and zero weights on non-edges. 
We write \(G(u,v)\) for the corresponding weight on $uv$ by identifying $G$ as its adjacency matrix with corresponding weights. 
A graph without specified edge weights corresponds to a canonical \(\{0,1\}\)-weight. We sometimes use the term \emph{unweighted graph} to emphasise that we consider an ordinary graph with \(\{0,1\}\)-weights only.
The \emph{support} of a weighted graph $G$, denoted by \(\supp(G)\), is the graph \(G'\) where \(V(G')\defeq V(G)\) and \(uv\in E(G')\) if and only if \(G(u,v)>0\).
Let \(N_G(v)\) be the set of neighbours of the vertex \(v\) in \(G\).
For \(U,W\subseteq V(G)\), write \(G[U]\) for the subgraph of \(G\) induced on \(U\) and let $G[U,W]$ be the \emph{bi-induced} graph on $U\cup W$ with edges of the form $e=uv\in E(G)$ with $u\in U$ and $v\in W$.

To recall, a weighted graph is \emph{antiferromagnetic} if it has at most one positive eigenvalue, counting multiplicities. Recall also that an antiferromagnetic graph always refers to a weighted graph.
We say that a symmetric matrix with nonnegative entries is antiferromagnetic if the corresponding weighted graph is antiferromagnetic.
By the Cauchy interlacing theorem, 
vertex deletions, i.e., taking principal minors, preserve antiferromagnetism.
\begin{proposition}\label{prop:induced}
    An induced subgraph of an antiferromagnetic graph is also antiferromagnetic.
\end{proposition}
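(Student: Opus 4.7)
The plan is to translate the statement into linear algebra: identify a weighted graph with its adjacency matrix $A$ and recall that an induced subgraph corresponds to a principal submatrix of $A$. Antiferromagnetism then reads as the assertion that the symmetric matrix $A$ has at most one positive eigenvalue (counted with multiplicity), so I only need to show that this spectral condition is inherited by principal submatrices. The paper itself points to the Cauchy interlacing theorem as the tool, which is exactly the right hammer.

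More concretely, I would first reduce to the case of deleting a single vertex. If $G$ is antiferromagnetic on vertex set $V$ and $v\in V$, let $A$ be the adjacency matrix of $G$ and $B$ the principal submatrix indexed by $V\setminus\{v\}$, which is the adjacency matrix of the induced subgraph $G[V\setminus\{v\}]$. Writing the eigenvalues of $A$ as $\lambda_1\geq\lambda_2\geq\cdots\geq\lambda_n$ and those of $B$ as $\mu_1\geq\mu_2\geq\cdots\geq\mu_{n-1}$, Cauchy interlacing gives $\lambda_i\geq\mu_i\geq\lambda_{i+1}$ for every $i$. By hypothesis $\lambda_2\leq 0$, so in particular $\mu_2\leq\lambda_2\leq 0$, which means $B$ has at most one positive eigenvalue. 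Since $B$ still has nonnegative entries, the induced subgraph $G[V\setminus\{v\}]$ is antiferromagnetic.

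Finally I would iterate vertex deletions: an arbitrary induced subgraph $G[U]$ is obtained from $G$ by deleting vertices of $V\setminus U$ one at a time, and the single-vertex-deletion step preserves antiferromagnetism, so by induction on $|V\setminus U|$ the induced subgraph $G[U]$ is antiferromagnetic. There is no real obstacle here; the only thing to be careful about is checking that nonnegativity of entries and symmetry are trivially preserved by passing to a principal submatrix, so the entire content of the proposition is the spectral half, handled by Cauchy interlacing. I would keep the write-up to a couple of lines, just invoking interlacing on a single vertex deletion and iterating.
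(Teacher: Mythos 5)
Your argument is correct and matches the paper's approach exactly: the paper itself simply cites the Cauchy interlacing theorem (in the sentence immediately preceding the proposition) and gives no further detail, so your write-up is essentially an expanded version of the same one-line proof. The single-vertex reduction and the observation $\mu_2 \le \lambda_2 \le 0$ are precisely the content the paper leaves implicit.
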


\paragraph{Discrete mixed volume.}
If a graph $H$ is $q$-colourable, then the \emph{$q$-chromatic symmetric polynomial}, due to Stanley~\cite{stanley1995symmetric}, is defined by
\[
    X_H(\bx)\defeq \sum_{\phi\in\Hom(H,K_q)}
    \prod_{v\in V(H)} x_{\phi(v)}.
\]
We generalise this definition by replacing $K_q$ by an arbitrary weighted graph. Let \(G\) be an $n$-vertex weighted graph on the vertex set \([n]\) and let \(\bx = (x_1,\dots,x_n)\) be an \(n\)-tuple of variables. A \emph{\(G\)-chromatic function of \(H\)} is an $n$-variable homogeneous polynomial
\[
    h_H(\bx;G) = h_H(x_1,\dots,x_n;G) \defeq \sum_{\phi\colon V(H)\to V(G)} \prod_{uv\in E(H)} G(\phi(u),\phi(v)) \prod_{v\in V(H)} x_{\phi(v)},
\]
which has degree $v(H)$.
Note that the coefficient $\prod_{uv\in E(H)} G(\phi(u),\phi(v))$ is nonzero if and only if $\phi$ is a homomorphism from $H$ to the support of $G$.
In particular, the \(K_q\)-chromatic function of $H$ is the $q$-chromatic symmetric polynomial of $H$. We however remark that \(h_H(\bx;G)\) is \emph{not} symmetric in general.
Indeed, $h_H(x_{\sigma(1)},\dots,x_{\sigma(n)};G) = h_H(x_1,\dots,x_n;G)$ holds for an automorphism $\sigma$ of $\supp(G)$ that preserves edge weights, but this may \emph{not} be true for an arbitrary permutation \(\sigma\) on \([n]\).

The \emph{$G$-volume} of $H$ is a multilinear generalisation of the $G$-chromatic function.
Let \(V(H) = [t]\) and let \(\bx_i=(x_{i,1},\dots,x_{i,n})\), \(i=1,2,\dots,t\) be \(n\)-tuples of variables. The \(G\)-volume of \(H\) is a $t$-variable real function on $(\RR^n)^t$ defined by
\[
    V_H(\bx_1,\bx_{2},\dots,\bx_t;G) \defeq \sum_{\phi\colon V(H)\to V(G)} \prod_{uv\in E(H)} G(\phi(u),\phi(v)) \prod_{u\in V(H)} x_{u,\phi(u)}.
\]
In particular, $V_H(\bx,\bx,\dots,\bx;G)=h_H(\bx;G)$. If each $\bx_{i}$ is the indicator vector for a vertex subset $U_i\subseteq V(G)$, then $V_H(\bx_{1}, \dots,\bx_{t};G)$ counts all the homomorphisms $\phi$ that embed the vertex $i\in V(H)$ into $U_i$.
Note that permuting $\bx_1,\bx_2,\dots,\bx_k$ may change the corresponding $G$-volume of $H$, although the $G$-volume of $H=K_t$ remains unchanged.


Resembling the definition of mixed volume in Euclidean geometry, 
the identity 
\[
    h_H(\lambda_1\bx_1+\dots+\lambda_r\bx_r; G) = \sum_{(i_1,\dots,i_t)\in [r]^t} V_H(\bx_{i_1},\dots,\bx_{i_t}) \lambda_{i_1}\dotsm \lambda_{i_t}
\]
holds for \(\bx_1,\dots,\bx_r\in \RR^n\) and \(\lambda_1,\dots,\lambda_r \in\RR\).
Considering \(\lambda_1,\dots,\lambda_r\) as variables, we see
\[
    \sum_{\sigma\in S_t} V_H(\bx_{\sigma(1)},\dots,\bx_{\sigma(t)};G) = \partial_{\lambda_1}\dotsm \partial_{\lambda_t} h_H(\lambda_1\bx_1 +\dots+ \lambda_t\bx_t; G),
\]
where \(S_t\) is the symmetric group on \([t]\). In particular, as \(V_{K_t}(\bx_1,\dots,\bx_t;G)\) is symmetric,
\begin{align}\label{eq:G-volume-Kt-pdiff-form}
    V_{K_t}(\bx_1,\dots,\bx_t;G) = \frac{1}{t!} \partial_{\lambda_1}\dotsm \partial_{\lambda_t} h_{K_t}(\lambda_1\bx_1 +\dots+ \lambda_t\bx_t; G).
\end{align}

\paragraph{Lorentzian polynomials.}
Let \(f\) be a polynomial on \(n\) variables \(x_1,\dots,x_n\). The Hessian \(\Hess f\) of $f$ is the \(n\times n\) matrix where \((\Hess f)_{i,j} = \partial_i \partial_j f\) for \(1\le i,j\le n\). The \emph{support} \(\supp(f)\) of \(f\) is the set of tuples \((a_1,\dots,a_n)\in \NN_0^n\) such that \(x_1^{a_1}\dotsm x_n^{a_n}\) has a nonzero coefficient in \(f\).

Following Br\"{a}nd\'{e}n and Huh~\cite{branden2020lorentzian}, Lorentzian polynomials $f$ are defined recursively by using partial derivatives $\partial_i f$ and a combinatorial property of the support of $f$, the so-called \emph{\Mc-convexity}.
A set of vectors \(S\subseteq \NN_0^n\) is \emph{\Mc-convex} if the following \emph{exchange property} holds: for any vectors \(\ba=(a_1,\dots,a_n)\) and \(\bb=(b_1,\dots,b_n)\) in \(S\), whenever \(a_i>b_i\) for some \(i\in [n]\), there is \(j\in [n]\) such that \(a_j<b_j\) and \(\ba-\be_i+\be_j\in S\). Here \(\be_i\) and \(\be_j\) are the \(i\)-th and \(j\)-th standard unit vectors in \(\NN_0^n\), respectively.
Roughly speaking, \Mc-convexity of $S$ defines a base of a discrete polymatroid, a multiset analogue of a matroid. We refer the reader to~\cite{herzog2002discrete} for more discussions about polymatroids.

Although there is no harm in saying that all homogeneous linear polynomials with nonnegative coefficients are Lorentzian, it only causes extra technicalities to carry on. Thus, we restrict ourselves to those homogeneous polynomials of degree at least two. 

A homogeneous polynomial $f$ with nonnegative coefficients of degree $d\geq 2$ is said to be \emph{Lorentzian} if: 
\begin{enumerate}
    \item for $d=2$, the Hessian $\Hess f$ is antiferromagnetic;
    \item for $d>2$, each partial derivative $\partial_i f$ is Lorentzian and $\supp(f)$ is \Mc-convex.
\end{enumerate}
We remark that, while not stated explicitly above, a quadratic Lorentzian polynomial \(f\) also has \Mc-convex support.
We refer to~\cite[Theorem~5.3]{choe2004homogeneous} and \cite[Theorem~3.2]{branden2007polynomials} for the proof of this fact.

\begin{proposition}\label{prop:M-convex_support}
   Let $f$ be a quadratic homogeneous polynomial. Then \(\supp(f)\) is \Mc-convex if \(\Hess f\) is antiferromagnetic.
   In particular, the support of an antiferromagnetic graph $G$ is again antiferromagnetic.
\end{proposition}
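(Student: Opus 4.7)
The plan is to verify the M-convex exchange property for $\supp(f) \subseteq \NN_0^n$ directly, invoking antiferromagnetism of $\Hess f$ to derive a contradiction whenever the naive exchange fails. Since $f$ is quadratic, every element of $\supp(f)$ has the form $\be_p + \be_q$ (with $p=q$ allowed, giving $2\be_p$), corresponding to a monomial $x_p x_q$ with positive coefficient in $f$, equivalently a positive entry of $\Hess f$ at position $(p,q)$.

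Given $\ba = \be_p + \be_q$ and $\bb = \be_r + \be_s$ in $\supp(f)$ and an index $i \in \{p,q\}$ with $a_i > b_i$, I split on the overlap pattern of the multisets $\{p,q\}$ and $\{r,s\}$. When the two multisets share an index, a suitable exchange index $j$ (chosen from the coordinates where $\bb$ exceeds $\ba$) can be identified by inspection: the shift $\ba - \be_i + \be_j$ either equals $\bb$ or lies in $\supp(f)$ for a direct reason. The genuinely nontrivial cases are the \emph{disjoint} configurations, namely $\{p,q\} \cap \{r,s\} = \emptyset$, including the subcases $\ba = 2\be_p$ or $\bb = 2\be_r$. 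In each such disjoint case, if no valid exchange index $j$ exists, then a prescribed off-diagonal (or diagonal) entry of $\Hess f$ must vanish; the $2 \times 2$, $3 \times 3$, or $4 \times 4$ principal submatrix of $\Hess f$ indexed by the indices appearing in $\{p,q,r,s\}$ then becomes block-diagonal with two blocks, each carrying a positive eigenvalue. Hence this principal submatrix has at least two positive eigenvalues, contradicting antiferromagnetism of $\Hess f$ via Cauchy interlacing (\Cref{prop:induced}).

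For the ``in particular'' part, apply the above to the quadratic form $f(\bx) \defeq \bx^{\top} A_G \bx$ with Hessian $\Hess f = 2 A_G$, which is antiferromagnetic by hypothesis; M-convexity of $\supp(f)$ then expresses exactly the stated structural property of $\supp(G)$. The main obstacle is not conceptual but combinatorial: enumerating all overlap patterns between $\{p,q\}$ and $\{r,s\}$, and, in each disjoint subcase, pinpointing the correct principal submatrix together with its two positive-eigenvalue blocks, requires a careful case analysis with no deep input beyond Cauchy interlacing.
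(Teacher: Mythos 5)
Your plan departs from the paper, which does not prove the main claim directly but cites Choe--Oxley--Sokal--Wagner and Br\"and\'en for it, and handles the ``in particular'' part via the Lorentzian correspondence and Lemma~3.11 of Br\"and\'en--Huh. A from-scratch exchange-axiom verification is a legitimate alternative, but as outlined it has a genuine gap in the disjoint cases. Take $\ba = \be_p + \be_q$, $\bb = \be_r + \be_s$ with $p,q,r,s$ distinct (or the mixed case $\bb = 2\be_r$), write $A = \Hess f$, and suppose the exchange fails for $i = p$. This only forces $A_{qr} = A_{qs} = 0$. The sole principal submatrix that then becomes block-diagonal is the $3\times3$ one on $\{q,r,s\}$, namely $(A_{qq}) \oplus \begin{psmallmatrix} A_{rr} & A_{rs} \\ A_{rs} & A_{ss} \end{psmallmatrix}$, and the $1\times1$ block need \emph{not} carry a positive eigenvalue: applying the Lorentzian (reverse) Cauchy--Schwarz inequality $(\bx^\top A\by)^2 \ge (\bx^\top A\bx)(\by^\top A\by)$ with $\bx = \be_q$ and $\by = \be_r + \be_s$ shows $A_{qq} \le 0$, hence $A_{qq} = 0$, so this submatrix has at most one positive eigenvalue and interlacing yields no contradiction. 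The $4\times4$ submatrix on $\{p,q,r,s\}$ is not block-diagonal either, because nothing forces $A_{pr}$ or $A_{ps}$ to vanish (one can only assume the exchange fails for \emph{one} choice of $i$, not both). The argument can be salvaged --- e.g.\ exhibit a two-dimensional subspace such as $\mathrm{span}\bigl(\be_p + t\be_q - \alpha(\be_r+\be_s),\ \be_r+\be_s\bigr)$, with $\alpha$ chosen to orthogonalize against $A$ and $t$ large, on which $A$ is positive definite --- but that step is the hyperbolic Cauchy--Schwarz inequality, not ``block-diagonal plus Cauchy interlacing''. (By contrast, your block-diagonal argument does close the loop-vs-loop and loop-vs-edge-with-$\ba=2\be_p$ cases, since there $A_{pp}>0$ is given.)

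There is also a gap in the ``in particular'' step. Applying the main claim to $Q_G$ gives that $\supp(Q_{\supp(G)}) = \supp(Q_G)$ is \Mc-convex, but \Mc-convexity of the monomial support is not the same as antiferromagnetism of $\supp(G)$; you still need the converse direction, namely that a $\{0,1\}$-coefficient quadratic polynomial with \Mc-convex support is Lorentzian. That implication is precisely what the paper invokes Lemma~3.11 of Br\"and\'en--Huh for, and your proposal does not supply it.
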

To see why the `in-particular' part holds, recall that an \(n\)-vertex weighted graph or its adjacency matrix, denoted by \(G\), naturally corresponds to the quadratic $n$-variable polynomial \(Q_G(\bx) \defeq \frac{1}{2} \bx^T G \bx\),
whose Hessian is exactly $G$.
Then \(G\) is antiferromagnetic if and only if \(Q_G\) is Lorentzian. 
Let $G'=\supp(G)$ for an antiferromagnetic graph $G$. Then \(\supp(Q_{G'}) = \supp(Q_G)\) is \Mc-convex, so \(Q_{G'}\) is Lorentzian by \cite[Lemma~3.11]{branden2020lorentzian}, whence \(G'\) is also antiferromagnetic.

For a graph $G$, a \emph{blow-up} of $G$ means an $\HH$-blow-up of $G$ for $\HH$ that consists of independent sets. 
That is, we replace each $v_i\in V(G)$ by its `clones' $v_i^{(j)}$, $j=1,2,\dots,r_i$ for some positive integer $r_i$ and put an edge between $v_i^{(j)}v_k^{(\ell)}$ if and only if $v_iv_k\in E(G)$.
We extend this definition to weighted graphs $G$ possibly with loops by putting the weight $G(v_i,v_k)$ to each pair $v_i^{(j)}v_k^{(j)}$.
In particular, if $v_i$ is a looped vertex, its clones \(v_i^{(1)},\dots,v_i^{(r_i)}\) form a complete graph $K_{r_i}$ plus $r_i$ loops, whereas clones of a non-looped vertex form an independent set. 
The following lemma, which generalises~\Cref{prop:induced}, states that a nonnegative linear change of variables preserves the Lorentzian property. 
It is a direct consequence of \cite[Theorem~2.10]{branden2020lorentzian} and its proof therein, so we omit the proof.

\begin{lemma}\label{lem:G-vol-Lor-preserved-by-G-blowup-indsubgp}
    Let \(G'\) be a blow-up or an induced subgraph of \(G\). Then the following holds:
    \begin{enumerate}
        \item if \(h_H(\blank;G)\) has \Mc-convex support, then so does \(h_H(\blank;G')\);
        \label{case:G-vol-Lor-preserved-by-G-blowup-indsubgp_M-conv}
        \item if \(h_H(\blank;G)\) is Lorentzian, then so is \(h_H(\blank;G')\).
        \label{case:G-vol-Lor-preserved-by-G-blowup-indsubgp_Lor}
    \end{enumerate}
\end{lemma}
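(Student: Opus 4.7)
The plan is to realise $h_H(\blank;G')$, in both cases, as the image of $h_H(\blank;G)$ under a substitution of variables by nonnegative linear forms, and then invoke \cite[Theorem~2.10]{branden2020lorentzian}, which guarantees that both the Lorentzian property and \Mc-convexity of support are preserved under such pullbacks.

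First I would handle the induced subgraph case. If $G' = G[V']$ with $V'\subseteq V(G)$, substituting $x_v = 0$ for every $v \in V(G)\setminus V'$ in $h_H(\bx;G)$ kills precisely the monomials indexed by homomorphisms $\phi\colon V(H)\to V(G)$ whose image meets $V(G)\setminus V'$. The surviving terms correspond to maps $\phi\colon V(H)\to V'$ with edge weights inherited from $G$, so the result coincides (after relabelling variables) with $h_H(\bx';G')$.

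Next, for a blow-up $G'$ obtained by replacing each $v\in V(G)$ with clones $v^{(1)},\dots,v^{(r_v)}$, I would substitute $x_v \mapsto \sum_{j=1}^{r_v} y_{v^{(j)}}$. Expanding the product, each map $\psi\colon V(H)\to V(G')$ corresponds bijectively to a pair consisting of a map $\phi\colon V(H)\to V(G)$ and clone indices $(j_u)_{u\in V(H)}$ with $\psi(u) = \phi(u)^{(j_u)}$. The crux is to verify that
\[
    \prod_{uv\in E(H)} G'(\psi(u),\psi(v)) \;=\; \prod_{uv\in E(H)} G(\phi(u),\phi(v))
\]
for every such $\psi$. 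This reduces to an edge-by-edge check of three cases: when $\phi(u)\ne\phi(v)$, equality is immediate from the definition of the blow-up; when $\phi(u)=\phi(v)=w$ with $w$ looped in $G$, both weights equal $G(w,w)$ since the corresponding clones form $K_{r_w}$ with loops; and when $\phi(u)=\phi(v)=w$ with $w$ not looped, both sides vanish since clones of a non-looped vertex form an independent set. Hence the substitution sends $h_H(\bx;G)$ to $h_H(\by;G')$.

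Both substitutions are linear with nonnegative coefficients, so \cite[Theorem~2.10]{branden2020lorentzian} (together with its proof) immediately yields the preservation of \Mc-convexity of support in~(1) and of the Lorentzian property in~(2). No single step presents a genuine obstacle; the only subtlety worth flagging is the weight-matching identity in the blow-up case, which must be verified separately for looped and non-looped vertices as above, and which depends crucially on the convention that clones of a non-looped vertex form an independent set. Beyond that, the argument is essentially bookkeeping.
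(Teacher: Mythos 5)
Your proposal is correct and takes essentially the same route as the paper, which omits the proof but explicitly attributes it to \cite[Theorem~2.10]{branden2020lorentzian} and its proof (i.e., preservation of the Lorentzian property and of \Mc-convex support under nonnegative linear substitutions). You have fleshed out the two substitutions — variable deletion for induced subgraphs and $x_v \mapsto \sum_j y_{v^{(j)}}$ for blow-ups — and the edge-weight verification in the blow-up case, including the looped/non-looped distinction, is exactly the bookkeeping the paper leaves implicit.
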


\section{Lorentzian \texorpdfstring{$G$}{G}-chromatic functions}\label{sec:G_chromatic}

Our first step to prove~\Cref{thm:main} is to show that $h_{K_t}(\bx;G)$ is Lorentzian whenever $G$ is antiferromagnetic.
\begin{theorem}\label{thm:AFM-hom-Lor}
    Let \(G\) be an antiferromagnetic graph and let \(t\ge2\). Then \(h_{K_t}(\bx;G)\) is Lorentzian.
\end{theorem}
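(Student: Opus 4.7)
The plan is to prove~\Cref{thm:AFM-hom-Lor} by induction on $t\ge 2$. In the base case $t=2$, the polynomial $h_{K_2}(\bx;G)=\bx^{T}G\bx$ is a quadratic form whose Hessian is $2G$, which is antiferromagnetic by hypothesis; \Cref{prop:M-convex_support} then gives \Mc-convexity of the support, so $h_{K_2}(\blank;G)$ is Lorentzian by definition.

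For the inductive step, assume $h_{K_{t-1}}(\blank;G)$ is Lorentzian whenever $G$ is antiferromagnetic. Differentiating the defining sum, singling out which of the $t$ vertices of $K_t$ is sent to $k$, and using the symmetry of $K_t$ yields the identity
\[
\partial_k h_{K_t}(\bx;G) \;=\; t\,h_{K_{t-1}}(\by;G), \qquad \text{where } y_j \defeq G(k,j)\,x_j.
\]
Since $y_j\mapsto G(k,j)\,x_j$ is a nonnegative diagonal substitution, hence a special case of the nonnegative linear changes of variables in~\cite[Theorem~2.10]{branden2020lorentzian}, the induction hypothesis immediately gives that $\partial_k h_{K_t}(\blank;G)$ is Lorentzian for every~$k$.

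It then remains to check \Mc-convexity of $\supp(h_{K_t}(\blank;G))$. The coefficient of $\bx^{\ba}$ equals $\binom{t}{\ba}\prod_{i<j}G(i,j)^{a_ia_j}\prod_iG(i,i)^{\binom{a_i}{2}}$, so the support depends only on $\supp(G)$, which is itself antiferromagnetic by~\Cref{prop:M-convex_support}. I may therefore assume $G$ is unweighted, so that $\ba\in\supp(h_{K_t}(\blank;G))$ if and only if $\{i:a_i>0\}$ induces a clique of $G$ and every $i$ with $a_i\ge 2$ carries a loop. My plan is to verify the exchange property for two such $\ba,\bb$ directly, leaning on the structural description of unweighted antiferromagnetic graphs: in the loopless situation the edge-bearing components are complete multipartite, so the size-$t$ multi-cliques are the bases of a partition-matroid-type structure and hence automatically \Mc-convex, while loops are handled by a short case analysis on whether the exchange coordinate was already present in~$\ba$.

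The main obstacle is precisely this \Mc-convexity step. The partial-derivative recursion disposes of the Lorentzian condition on $\partial_k h_{K_t}(\blank;G)$ in a soft way, but extracting \Mc-convexity of the full support requires producing an exchange coordinate $j$ that is simultaneously adjacent in $\supp(G)$ to every index appearing in $\ba-\be_k$ and looped whenever the exchange would create multiplicity~$\ge 2$. This forces one to use the spectral assumption on $G$ in a combinatorial form beyond what the inductive recursion itself provides.
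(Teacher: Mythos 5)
Your plan matches the paper's proof almost exactly: the same induction on $t$, the same identity $\partial_\nu h_{K_t}(\bx;G)=t\,h_{K_{t-1}}(\by;G)$ with $y_j=G(\nu,j)x_j$ combined with the affine-invariance result \cite[Theorem~2.10]{branden2020lorentzian}, and the same reduction of \Mc-convexity to the unweighted graph $\supp(G)$ (correctly noting, via \Cref{prop:M-convex_support}, that the support is again antiferromagnetic) and hence, via \Cref{thm:AFM-gph-equiv}, to the structural picture of apex loops over a complete multipartite graph.

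The one place you depart from the paper is the final exchange verification, which you leave as a sketch. You propose to check \Mc-convexity directly on the general structured graph. This is doable: whenever a looped vertex $\mu$ with $a_\mu<b_\mu$ exists it always serves as the exchange target, and otherwise a comparison of how many multipartite parts $\ba$ and $\bb$ occupy produces a vacant part into which the exchange can land. But the paper's route is cleaner: it first applies \Cref{lem:G-vol-Lor-preserved-by-G-blowup-indsubgp} to reduce the whole \Mc-convexity check to the single graph $G_0=K_q^\circ$, where the support is simply $\{\ba:\sum a_i=t,\ a_i\le 1\text{ for }i<q\}$ and the exchange axiom reduces to a three-case comparison of the $q$-th coordinates $a_q$ and $b_q$. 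That reduction is exactly what the blow-up/induced-subgraph lemma was set up for, and adopting it turns the case analysis you wave at into something genuinely short; as written, your final step is a plausible plan but not yet a complete argument.
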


By the recursive definition of Lorentzian polynomials, there are two facts to check: first, $\partial_i h_{K_t}(\bx;G)$ is Lorentzian for every $i=1,\dots,n$, where $n=v(G)$; second, the support of $h_{K_t}(\bx;G)$ is \Mc-convex.
The first part is rather straightforward to verify. Intuitively, the partial derivative $\partial_i h_{K_t}(\bx;G)$ corresponds to the number of (weighted) copies of $K_t$ that uses the vertex $i\in V(G)$. For unweighted $G$, this explanation is essentially the proof itself, which is presented in our extended abstract for FPSAC~2025~\cite{lee2025counting-FPSAC}. For weighted graphs $G$, the notation might look a little complicated but the conceptual essence of the argument remains the same.

Second, to prove that $\supp(h_{K_t}(\bx;G))$ is \Mc-convex, we give a structural characterisation of antiferromagnetic graphs $G$, which is given in~\Cref{thm:AFM-gph-equiv}. This can be seen as a generalisation of \cite[Corollary~5.4]{choe2004homogeneous}, which states that a loopless graph without isolated vertices is antiferromagnetic if and only if it is a complete multipartite graph. The only difference is that `apex' loops, which connect to all the vertices, may appear.
Let \(K_q^{\circ}\) denote the complete graph on \(q\) vertices with exactly one vertex looped; recall that this corresponds to a semiproper colouring.

\begin{theorem}\label{thm:AFM-gph-equiv}
    Let $G$ be a (unweighted) graph without isolated vertices. Then the following are equivalent:
    \begin{enumerate}
        \item\label{cond:(thm:AFM-gph-equiv)-G-AFM} \(G\) is antiferromagnetic;
        \item\label{cond:(thm:AFM-gph-equiv)-PG-Lor} \(Q_G\) is Lorentzian;
        \item\label{cond:(thm:AFM-gph-equiv)-E(G)-Mconv} \(\supp(Q_G)\) is \Mc-convex;
        \item\label{cond:(thm:AFM-gph-equiv)-G-str} there exist disjoint vertex sets \(V_1\) and \(V_2\) such that $V_1\cup V_2=V(G)$, $V_1$ induces a complete multipartite graph, and $V_2$ consists of looped vertices that connect to all the vertices in $G$.

        That is, \(G\) is obtained by blowing up \(K_q\) or \(K_q^\circ\).
    \end{enumerate}
\end{theorem}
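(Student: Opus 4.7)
The plan is to prove the cycle $(1)\Leftrightarrow(2)\Rightarrow(3)\Rightarrow(4)\Rightarrow(1)$. The equivalence $(1)\Leftrightarrow(2)$ is immediate from definitions: since $Q_G(\bx)=\tfrac12\bx^T G\bx$ has Hessian equal to the adjacency matrix of $G$, a quadratic polynomial is Lorentzian exactly when its Hessian is antiferromagnetic. The implication $(2)\Rightarrow(3)$ is the content of~\Cref{prop:M-convex_support}, so it remains to establish $(3)\Rightarrow(4)$ and $(4)\Rightarrow(1)$.

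For $(4)\Rightarrow(1)$, I would write the blow-up structure explicitly. If $G$ is obtained by blowing up $F\in\{K_q,K_q^\circ\}$ with part sizes $r_1,\dots,r_q$, then the adjacency matrix factors as $A_G = S A_F S^T$, where $S$ is the $n\times q$ partition indicator matrix. The non-zero eigenvalues of $SA_FS^T$ coincide with those of $A_F S^T S = A_F D$ for $D = \operatorname{diag}(r_1,\dots,r_q)$, and hence with those of the symmetric matrix $D^{1/2} A_F D^{1/2}$; since $D^{1/2}$ is non-singular, Sylvester's law of inertia ensures this matrix has the same inertia as $A_F$. It therefore suffices to verify that both $K_q$ and $K_q^\circ$ have exactly one positive eigenvalue. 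The spectrum of $K_q$ is $\{q-1,-1,\dots,-1\}$, while the adjacency matrix $J-I+\be_1\be_1^T$ of $K_q^\circ$ leaves $\operatorname{span}(\mathbf{1},\be_1)$ invariant with restriction of signature $(1,1)$, and acts as $-I$ on the orthogonal complement.

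The structural implication $(3)\Rightarrow(4)$ is the main work. Let $V_2$ be the set of looped vertices and $V_1 = V(G)\setminus V_2$. I would first apply the M-convex exchange property to pairs $(2\be_u,2\be_v)$ with $u,v\in V_2$ to force $V_2$ to be a clique, and to $(2\be_w,\be_u+\be_v)$ with $w\in V_2$ and $uv$ a non-loop edge to force $wu,wv\in E(G)$; combined with the non-isolation hypothesis, this yields the apex property that every vertex of $V_2$ is adjacent to every vertex of $V_1$. The remaining claim is that $G[V_1]$ is complete multipartite, i.e., contains no induced ``edge plus isolated vertex''. Suppose for contradiction $u,v,w\in V_1$ satisfy $uv\in E(G)$ but $uw,vw\notin E(G)$. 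Non-isolation of $w$ provides a neighbour $x\ne u,v$, and applying the exchange axiom to $\ba = \be_w+\be_x$ and $\bb = \be_u+\be_v$ at coordinate $x$ forces some $j\in\{u,v\}$ with $\be_w+\be_j\in\supp(Q_G)$, contradicting $wu,vw\notin E(G)$. If $G[V_1]$ is edgeless, then $V_1$ realises the $1$-partite case and $(4)$ holds trivially.

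The main obstacle lies in $(3)\Rightarrow(4)$, and the subtle point is handling a vertex of $V_1$ that is isolated within $G[V_1]$ but attached only to apex vertices in $V_2$. The single exchange argument above is uniform in whether the witness $x$ lies in $V_1$ or in $V_2$, so this potentially delicate case is covered automatically by the same contradiction; what required care is choosing the pair $(\ba,\bb)$ so that the exchange axiom tests exactly the pair of non-edges $wu,wv$ that must be absent.
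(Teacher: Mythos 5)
Your proposal is correct, and it reaches the theorem by a genuinely different route than the paper's. You close the cycle $(1)\Leftrightarrow(2)\Rightarrow(3)\Rightarrow(4)\Rightarrow(1)$ and prove the two nontrivial implications directly, whereas the paper establishes $(1)\Leftrightarrow(2)\Leftrightarrow(3)$ by citing Br\"and\'en--Huh Lemma~3.11 (for the direction $(3)\Rightarrow(2)$, which your cycle renders unnecessary) and then proves $(1)\Leftrightarrow(4)$ separately. The chief difference is in how the structure $(4)$ is extracted: the paper derives $(1)\Rightarrow(4)$ by feeding \Cref{prop:induced} a short list of forbidden induced subgraphs that fail to be antiferromagnetic (\twoedges, \paththree, \triangleaddedge, \twoloops, and then the looped variants \loopplusedge, \loopedpathtwo, \loopplusloopededge, \twoloopspathtwo\ to handle a non-apex looped vertex), whereas you prove $(3)\Rightarrow(4)$ purely combinatorially from the exchange axiom applied to the pairs $(2\be_u,2\be_v)$, $(\be_u+\be_v,2\be_w)$ and $(\be_w+\be_x,\be_u+\be_v)$. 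Your argument has the advantage of needing only the polymatroid axiom rather than any spectral fact about small graphs, and your observation that the witness $x$ may lie in $V_1$ or $V_2$ indifferently is exactly what replaces the paper's separate case analysis for looped vertices. For $(4)\Rightarrow(1)$ the paper invokes \Cref{lem:G-vol-Lor-preserved-by-G-blowup-indsubgp} (itself resting on Br\"and\'en--Huh Theorem~2.10), while you give the elementary linear-algebra computation: write $A_G=SA_FS^T$, pass to $A_FD$ with $D=S^TS$, and use similarity to $D^{1/2}A_FD^{1/2}$ together with Sylvester's law to pull the inertia back to $A_F$. This makes the argument self-contained at the cost of a short spectral calculation for $K_q$ and $K_q^\circ$. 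One small wrinkle in your write-up: to extract \emph{both} $wu,wv\in E(G)$ from the apex step you should apply the exchange axiom with $\ba=\be_u+\be_v$, $\bb=2\be_w$ at $i=u$ and again at $i=v$ (the ordering $\ba=2\be_w$, $\bb=\be_u+\be_v$ at $i=w$ yields only one of the two edges); this is a matter of phrasing, not of substance, since for the apex property you only ever need one vertex at a time.
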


\begin{proof}
    It was already mentioned in~\Cref{sec:prelim} that 
    \ref{cond:(thm:AFM-gph-equiv)-G-AFM},
    \ref{cond:(thm:AFM-gph-equiv)-PG-Lor},
    and \ref{cond:(thm:AFM-gph-equiv)-E(G)-Mconv} are equivalent by \cite[Lemma~3.11]{branden2020lorentzian}.
    
    \noindent \(\text{\ref{cond:(thm:AFM-gph-equiv)-G-AFM}}\implies \text{\ref{cond:(thm:AFM-gph-equiv)-G-str}}\). If $G$ is antiferromagnetic, then so are all its induced subgraphs by~\Cref{prop:induced}. Thus, the graphs \twoedges, \paththree, \triangleaddedge, and {\twoloops} are forbidden as an induced subgraph. It follows that non-looped vertices form a complete multipartite graph and looped vertices are pairwise adjacent.

    Suppose \(G\) has \loopplusvtx\, as an induced subgraph. As \(G\) has no isolated vertex, $G$ must have \loopplusedge, \loopedpathtwo, \loopplusloopededge, or {\twoloopspathtwo} as an induced subgraph. However, none of these are antiferromagnetic, which contradicts antiferromagnetism of $G$. Thus, each looped vertex connects to all the non-looped vertices. Together with the above paragraph, this characterises the structure of $G$ as given in \ref{cond:(thm:AFM-gph-equiv)-G-str}.
    
    \noindent \(\text{\ref{cond:(thm:AFM-gph-equiv)-G-str}}\implies\text{\ref{cond:(thm:AFM-gph-equiv)-G-AFM}}\). By direct calculation, $K_q^\circ$ is antiferromagnetic. By~\Cref{lem:G-vol-Lor-preserved-by-G-blowup-indsubgp}, so is~$G$.
\end{proof}

\begin{proof}[Proof of {\Cref{thm:AFM-hom-Lor}}]
    Write \(V(G)=[n]\) and \(\bx=(x_1,\dots,x_n)\). We use induction on \(t\), where the base case follows from \(h_{K_2}(\bx;G) = Q_G(\bx)\).
    Firstly, we claim that \(\partial_\nu h_{K_t}(\bx;G)\) is Lorentzian for all \(1\le\nu\le n\). To see this, note that 
    \begin{align*}
        \partial_\nu h_{K_t}(\bx;G)
        &= \sum_{\phi\colon [t]\to [n]}
            \prod_{\{i,j\}\in \binom{[t]}{2}} G(\phi(i),\phi(j))
            \biggl( \frac{\partial}{\partial x_\nu}
                \prod_{s\in [t]} x_{\phi(s)} \biggr)
        \\&= \sum_{\phi\colon [t]\to [n]}
            \prod_{\{i,j\}\in \binom{[t]}{2}} G(\phi(i),\phi(j))
            \sum_{k\in [t]} \biggl( \one(\phi(k)=\nu)
            \prod_{s\in [t]\setminus\{k\}} x_{\phi(s)} \biggr).
    \end{align*}
    Rearranging the order of the sum by fixing $k\in [t]$ with $\phi(k)=\nu$ gives
    \[
        \partial_\nu h_{K_t}(\bx;G)
        = \sum_{k\in [t]}
            \,\sum_{\phi \colon[t]\to [n]}
            \one(\phi(k)=\nu)
            \prod_{\{i,j\}\in \binom{[t]}{2}} G(\phi(i),\phi(j))
            \prod_{s\in [t]\setminus \{k\}} x_{\phi(s)}.
    \]
    By symmetry of $K_t$, fixing $\phi(k)=\nu$ and summing $\prod_{\{i,j\}\in \binom{[t]}{2}} G(\phi(i),\phi(j))\prod_{s\in [t]\setminus \{k\}} x_{\phi(s)}$ for all such $\phi$ does not depend on $k$.
    For the sake of simplicity, letting $k=t$ gives
    \begin{align*}
        & \sum_{\phi \colon[t]\to [n]}
            \one(\phi(t)=\nu)
            \prod_{\{i,j\}\in \binom{[t]}{2}} G(\phi(i),\phi(j))
            \prod_{s\in [t-1]} x_{\phi(s)}
        \\ ={} & \sum_{\psi\in [t-1]\to [n]}
            \prod_{\{i,j\}\in \binom{[t-1]}{2}} G(\psi(i),\psi(j))
            \prod_{s\in [t-1]} G(\psi(s),\nu)\, x_{\psi(s)}
        \\ ={} & h_{K_{t-1}}(y_1,\dots,y_n;G),
    \end{align*}
    where \(y_i\defeq G(i,\nu)\, x_i\) for \(1\le i\le n\). Therefore,
    \[
        \partial_\nu h_{K_t}(\bx;G) = t\cdot h_{K_{t-1}}(y_1,\dots,y_n;G).
    \]
    By \cite[Theorem~2.10]{branden2020lorentzian}, Lorentzian polynomials remain Lorentzian under affine transformations, i.e.,  $(x_1,\dots,x_n)\mapsto (a_1x_1,\dots,a_nx_n)$ for $a_i\geq 0$, $i=1,2,\dots,n$.
    Together with the induction hypothesis, this fact implies that \(h_{K_{t-1}}(y_1,\dots,y_n;G)\) is Lorentzian, whence \(\partial_\nu h_{K_t}(\bx;G)\) is also.

   Next, we claim that \(\supp(h_{K_t}(\bx;G))\) is \Mc-convex. Let $G_0\defeq\supp(G)$.
   Then 
    \begin{align*}
        \supp(h_{K_t}(\bx;G)) = \supp(h_{K_t}(\bx;G_0)),
    \end{align*}
    as changing all the positive edge weights of $G$ to $1$ turns $h_{K_t}(\bx;G)$ into $h_{K_t}(\bx;G_0)$. 
   By~\Cref{prop:M-convex_support}, \(G_0\) is also antiferromagnetic. \Cref{thm:AFM-gph-equiv}~\ref{cond:(thm:AFM-gph-equiv)-G-str} then implies that \(G_0\) is obtained from \(K_q^\circ\) for some \(q\ge1\) by blowing-up and taking an induced subgraph.
   
    It then suffices to show that \(\supp(h_{K_t}(\bx;K_q^\circ))\) is \Mc-convex, since \Mc-covexity of \(\supp(h_{K_t}(\bx;K_q^\circ))\) implies \Mc-convexity of $\supp(h_{K_t}(\bx;G_0))$ by~\Cref{lem:G-vol-Lor-preserved-by-G-blowup-indsubgp}. Write \(V(K_q^\circ) = \{w_1,\dots,w_q\}\) where \(w_q\) is the looped vertex. Let \(\ba=(a_1,\dots,a_q)\) and \(\bb=(b_1,\dots,b_q)\) be in \(\supp(h_{K_t}(\bx;K_q^\circ))\), and \(a_i>b_i\) for some \(i\in [q]\). Note \(a_j,b_j\in\{0,1\}\) for \(j\in [q-1]\). There are three cases: if \(a_q<b_q\), then \(i\neq q\) and \(\ba-\be_i+\be_q\in \supp(h_{K_t}(\bx;K_q^\circ))\). If \(a_q=b_q\), then \(i\in [q-1]\) and \(a_1+\dots+a_{q-1} = b_1+\dots+b_{q-1}\), so there is \(j\in [q-1]\setminus\{i\}\) such that \(a_j=0<1=b_j\). If \(a_q>b_q\), then \(a_1+\dots+a_{q-1} < b_1+\dots+b_{q-1}\), so again there is such \(j\in [q-1]\). In the two latter cases, \(\ba-\be_i+\be_j\in \supp(h_{K_t}(\bx;K_q^\circ))\). Therefore, \(\supp(h_{K_t}(\bx;K_q^\circ))\) is \Mc-convex.
\end{proof}

Having seen the reduction in the last few paragraphs of the proof, one may wonder whether \Mc-convexity of $h_H(\bx;G)$ always reduces to the \Mc-convexity of $h_H(\bx;K_q^\circ)$ or even $h_H(\bx;K_q)$ for some $q$. We show that this is indeed the case.

\begin{proposition}\label{prop:reduction}
    Let \(H\) be a connected graph on \(t\ge2\) vertices. If \(h_H(\blank;K_q)\) has \Mc-convex support for some \(q\ge t\), then so does \(h_H(\blank;G)\) for all antiferromagnetic graphs \(G\).
\end{proposition}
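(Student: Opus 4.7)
The plan is to mirror the structural reduction in the proof of \Cref{thm:AFM-hom-Lor}, with its final direct \Mc-convexity verification for $K_q^\circ$ replaced by an appeal to the hypothesis on $K_q$. My first step would be the routine reduction to the unweighted case: since positive edge weights leave the support of $h_H(\bx;G)$ unchanged, $\supp(h_H(\bx;G)) = \supp(h_H(\bx;G_0))$ for $G_0 \defeq \supp(G)$, and by \Cref{prop:M-convex_support} $G_0$ is itself antiferromagnetic. The connectedness of $H$ together with $t \ge 2$ forbids any homomorphism from landing at an isolated vertex of $G_0$, so such vertices can be deleted without affecting the support. By \Cref{thm:AFM-gph-equiv}\ref{cond:(thm:AFM-gph-equiv)-G-str}, the resulting graph is then a blow-up of $K_r$ or $K_r^\circ$ for some $r \ge 1$, and \Cref{lem:G-vol-Lor-preserved-by-G-blowup-indsubgp}\ref{case:G-vol-Lor-preserved-by-G-blowup-indsubgp_M-conv} would reduce the task to proving \Mc-convex support for $h_H(\blank; K_r)$ and $h_H(\blank; K_r^\circ)$ for every $r \ge 1$.

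For $K_r$ with $r \le q$, $K_r$ is an induced subgraph of $K_q$, so \Cref{lem:G-vol-Lor-preserved-by-G-blowup-indsubgp} combined with the hypothesis closes the case. For $r > q$, my plan is to exploit two features of $\supp(h_H(\blank; K_r))$: it is $S_r$-invariant (by vertex-transitivity of $K_r$) and each of its elements has at most $t \le q$ nonzero coordinates. Given $\ba, \bb$ in this support with $a_i > b_i$, a suitable permutation $\pi \in S_r$ places $\supp(\ba) \cup \supp(\bb)$ inside $[q]$ whenever $\abs{\supp(\ba) \cup \supp(\bb)} \le q$, reducing the required exchange directly to the hypothesis for $K_q$.

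For $K_r^\circ$, I would use the decomposition
\[
    h_H(\bx; K_r^\circ) = \sum_{S \subseteq V(H)} x_r^{\abs{S}}\, h_{H-S}(x_1,\ldots,x_{r-1}; K_{r-1})
\]
obtained by stratifying homomorphisms according to $S = \phi^{-1}(w_r)$, where $w_r$ is the looped vertex of $K_r^\circ$. The support is then layered by $a_r$: \Mc-convexity within each layer should follow from the $K_{r-1}$-case applied to the induced subgraphs $H - S$, while inter-layer exchanges correspond to transferring one vertex between $S$ and an ordinary color class.

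The hardest part will be the overlap regime $q < \abs{\supp(\ba) \cup \supp(\bb)} \le 2t$ for $K_r$ with $r > q$, together with the inter-layer exchanges for $K_r^\circ$. I expect that both can be handled by first upgrading the hypothesis from ``some $q \ge t$'' to ``$q \ge 2t$'' via a direct exchange argument that uses the $S_q$-symmetry of $\supp(h_H(\blank; K_q))$ and the stability of symmetric \Mc-convex supports under padding by zero coordinates, after which the permutation reduction to $[q]$ always succeeds.
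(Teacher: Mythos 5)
Your structural reduction is on the right track: pass to $\supp(G)$, delete isolated vertices (you are right to notice this, the paper glosses over it), invoke Theorem~\ref{thm:AFM-gph-equiv} and Lemma~\ref{lem:G-vol-Lor-preserved-by-G-blowup-indsubgp} to reduce to blow-ups of $K_r$ or $K_r^\circ$. And the $K_r$, $r\le q$ case is correct. But the two cases you flag as ``the hardest part'' are where the proof actually lives, and your plan for them does not go through.

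For $K_r$ with $r>q$, the ``upgrade $q$ to $2t$'' step is not a routine padding lemma --- it is equivalent in difficulty to the statement you are trying to prove. If you could deduce M-convexity of $\supp(h_H(\blank;K_{q'}))$ for $q'>q$ from M-convexity at $q$ by a soft symmetry/padding argument, you could iterate to get it for all $r$, and the whole proposition would be a two-line corollary; it is not. The insight you are missing, and which makes the ``overlap regime'' disappear, is this: given $\ba,\bb\in\supp(h_H(\blank;K_r))$ with $a_i>b_i$, if there is a colour $j$ with $a_j=0<b_j$, then $\ba-\be_i+\be_j$ is automatically in the support (recolouring one vertex to a colour unused in $\ba$ is always proper), so the exchange is trivial. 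Hence you may assume $\supp(\bb)\subseteq\supp(\ba)$, and since $\lvert\supp(\ba)\rvert\le t\le q$, a single permutation places everything in $[q]$ --- no case with $\lvert\supp(\ba)\cup\supp(\bb)\rvert>q$ ever needs to be handled, and no upgrade of the hypothesis is required. The paper expresses this as ``we may ignore those colours which are not used by $c_1$.''

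For $K_r^\circ$, your layering by $a_r$ is not fully worked out, and the within-layer sets are unions of supports over all $S$ with $\lvert S\rvert=a_r$, not a single $h_{H-S}$ support, so the claimed reduction is not immediate; the inter-layer exchanges are also the substantive case. The paper sidesteps the layering entirely: after the same ``ignore unused colours'' step it also observes that if the looped colour is scarcer in $c_1$ than in $c_2$, the exchange towards the looped colour is trivial, so one may assume $k\defeq\lvert c_1^{-1}(m)\rvert\ge\lvert c_2^{-1}(m)\rvert$. It then converts $c_1$ and $c_2$ into proper $t$-colourings by splitting the looped colour class of $c_1$ into $k$ fresh singleton colours (possible precisely because the used-colours count gives $m+k-1\le t$), applies M-convexity of $\supp(h_H(\blank;K_t))$ (inherited from $K_q$ since $t\le q$), and notes the resulting exchange never touches the fresh colours, so it descends back to a valid semiproper recolouring. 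Also note that the paper reduces to $K_m^\circ$ alone rather than to $K_r$ and $K_r^\circ$ separately, since $K_r$ is an induced subgraph of $K_{r+1}^\circ$; this removes the need for your separate $K_r$ branch.
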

\begin{proof}
    Let \(G\) be an $n$-vertex antiferromagnetic graph.
    By~\Cref{prop:M-convex_support}, \(\supp(G)\) is an unweighted antiferromagnetic graph. whence \Cref{thm:AFM-gph-equiv}~\ref{cond:(thm:AFM-gph-equiv)-G-str} gives that $\supp(G)$ is an induced subgraph of a blow-up of $K_m^\circ$. Thus, \Cref{lem:G-vol-Lor-preserved-by-G-blowup-indsubgp}~\ref{case:G-vol-Lor-preserved-by-G-blowup-indsubgp_M-conv} allows us to assume that $G=K_m^\circ$ for some $m$.

    We identify each homomorphism from $H$ to $K_{m}^\circ$ as a \emph{semiproper} $m$-colouring, where the colour $m$ can be used with no constraints while no adjacent pairs in $H$ get monochromatic with the other colours.
    Let $c_1$ and $c_2$ be two semiproper $m$-colourings of $H$. The \Mc-convexity then follows if we can `recolour' $c_1$ by changing the colour $r_1=c_1(v)$ of a vertex $v$ into $r_2$, where $r_1$ is more popular in $c_1$ than in $c_2$ and $r_2$ is more popular in $c_2$ than in $c_1$.
    For brevity, we say that this alteration of $c_1$ is a \emph{recolouring of $c_1$ towards  $c_2$}.

    If $c_2$ uses a colour \(i\) that never appears in $c_1$, then the recolouring of $c_1$ towards $c_2$ is constructed by using \(i\). In addition, a colour \(i\) with \(c_1^{-1}(i)=c_2^{-1}(i)=\emptyset\) does not affect the recolouring of \(c_1\) towards \(c_2\). Thus, we may ignore those colours which are not used by \(c_1\) and assume \(c_1^{-1}(i)\neq\emptyset\) for all \(i\in [m]\). Moreover, the `looped' colour $m$ must be used in $c_1$ as much as in $c_2$, as otherwise, the colour $m$ provides an option to recolour $c_1$. Let $k\defeq \abs{c_1^{-1}(m)}$.

    By~\Cref{lem:G-vol-Lor-preserved-by-G-blowup-indsubgp}~\ref{case:G-vol-Lor-preserved-by-G-blowup-indsubgp_M-conv}, \(\supp(h_H(\bx;K_t))\) is \Mc-convex. Turn $c_1$ into a (proper) $t$-colouring $c_1'$ by converting \(m\) into a non-looped colour and replacing each previous use of $m$ by a new distinct colour in $\{m,m+1,\dots,m+k-1\}$. Note this is possible as \(m-1\le \abs{c_1^{-1}(1)} + \dots + \abs{c_1^{-1}(m-1)} = t-k\) so that \(m+k-1\le t\). Using the same set of new colours \(\{m,m+1,\dots,m+k-1\}\), it is possible to make $c_2$ into a $t$-colouring $c_2'$ by assigning the new distinct colours to those vertices which was previously coloured by \(m\), as $\abs{c_2^{-1}(m)}\le k$.
    Then by \Mc-convexity of \(\supp(h_H(\bx;K_t))\), there is a recolouring of $c_1'$ towards $c_2'$.

    The recoloured vertex in $H$ by this recolouring does not use the new colours in \(\{m,m+1,\dots,m+k-1\}\), as these cannot be more popular in $c_2'$ than in $c_1'$. Thus, turning the new colours into the colour $m$ gives a semiproper colouring, which is a recolouring of $c_1$ towards $c_2$.
\end{proof}

This result may be potentially useful in finding other graphs $H$ than the complete graphs that give a Lorentzian $G$-chromatic function $h_H(\bx;G)$ for some particular antiferromagnetic graphs $G$.
We shall discuss more along these lines and propose open problems in the concluding remarks.

\section{Alexandrov--Fenchel-type inequalities for \texorpdfstring{$G$}{G}-volumes}
\label{sec:AF-ineq_for_G-vol}

Proposition~4.5 in \cite{branden2020lorentzian} states that one can obtain an Alexandrov--Fenchel-type inequality from any Lorentzian polynomial.

\begin{proposition}[Proposition~4.5 in {\cite{branden2020lorentzian}}]
\label{prop:Lor-AF-ineq}
    Let \(f\) be a homogeneous polynomial of degree \(d\) in \(n\) variables. Let \(F_f\colon (\RR^n)^d\to \RR\) be defined by
    \[
        F_f(v_1,\dots,v_d) \defeq \frac{1}{d!} \partial_{x_1}\dotsm \partial_{x_d} f(x_1v_1+\dots+x_dv_d).
    \]
    If \(f\) is Lorentzian, then for any \(v_1\in \RR^n\) and \(v_2,\dots,v_d\in (\RR_{\ge0})^n\),
    \[
        F_f(v_1,v_2,v_3,\dots,v_d)^2 \ge F_f(v_1,v_1,v_3,\dots,v_d) \cdot F_f(v_2,v_2,v_3,\dots,v_d).
    \]
\end{proposition}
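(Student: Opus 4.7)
The plan is to reduce the general case to the quadratic case by differentiating $f$ in the nonnegative directions $v_3, \dots, v_d$. First, for any $v \in \RR_{\ge 0}^n$, the directional derivative $\partial_v \defeq \sum_{i} v_i \partial_i$ preserves the Lorentzian property when applied to a Lorentzian polynomial of degree at least $3$: each $\partial_i f$ is Lorentzian by definition, and nonnegative linear combinations of Lorentzian polynomials of the same degree remain Lorentzian (a standard fact from~\cite{branden2020lorentzian}, with the zero polynomial treated as vacuously Lorentzian). Iterating this, set
\[
    h(x) \defeq \partial_{v_3} \partial_{v_4} \dotsm \partial_{v_d}\, f(x),
\]
so that $h$ is a quadratic Lorentzian polynomial, and its Hessian $\Hess h$ is an antiferromagnetic matrix with nonnegative entries. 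When $d=2$, simply take $h \defeq f$.

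Next, I would show that for any $u, w \in \RR^n$,
\[
    F_f(u, w, v_3, \dots, v_d) \;=\; \frac{1}{d!}\, u^T (\Hess h)\, w.
\]
To verify this, substitute $y \defeq x_1 v_1 + \dots + x_d v_d$; the chain rule gives $\partial_{x_d} \dotsm \partial_{x_3} f(y) = h(y)$, and the remaining $\partial_{x_1} \partial_{x_2}$ applied to $h(y)$ yields $u^T (\Hess h)\, w$ because $h$ is quadratic, hence has constant Hessian. Specialising this identity to $(u,w) \in \{(v_1,v_2),(v_1,v_1),(v_2,v_2)\}$ reduces the desired inequality to the classical hyperbolicity inequality
\[
    \bigl(v_1^T (\Hess h)\, v_2\bigr)^2 \;\ge\; \bigl(v_1^T (\Hess h)\, v_1\bigr)\bigl(v_2^T (\Hess h)\, v_2\bigr).
\]

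Writing $H \defeq \Hess h$, this follows from a two-dimensional signature argument. If $v_2^T H v_2 = 0$, then the right-hand side vanishes and the inequality is trivial. If $v_2^T H v_2 > 0$, the restriction of the bilinear form associated with $H$ to $\operatorname{span}(v_1, v_2)$ acquires at least one positive direction from $v_2$; since $H$ has at most one positive eigenvalue globally, Sylvester's law of inertia forces the restricted Gram determinant $(v_1^T H v_1)(v_2^T H v_2) - (v_1^T H v_2)^2$ to be nonpositive, which is exactly the claim.

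The main technical point is Step~1, namely that the cone of Lorentzian polynomials of fixed degree is closed under nonnegative linear combinations, so that iterated nonnegative directional differentiation lands us in the quadratic Lorentzian class. This is the one place where substantial input from the theory of~\cite{branden2020lorentzian} is used; given this closure property, everything else reduces cleanly to Sylvester's law of inertia on a two-dimensional subspace.
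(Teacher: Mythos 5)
The paper does not supply its own proof of this proposition; it is cited directly from Br\"and\'en--Huh. That said, your overall strategy --- reduce to a quadratic Lorentzian $h\defeq\partial_{v_3}\dotsm\partial_{v_d}f$ by directional differentiation, identify $F_f(u,w,v_3,\dots,v_d)$ with $\frac{1}{d!}\,u^T(\Hess h)\,w$, and then read off the inequality from the signature of the resulting $2\times2$ Gram matrix --- is the standard one and matches how the source proves it.

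There is, however, a genuine flaw in how you justify the first step. The asserted ``standard fact'' that nonnegative linear combinations of Lorentzian polynomials of the same degree remain Lorentzian is false: the Lorentzian cone is not convex. A minimal counterexample is $x_1^2$ and $x_2^2$, both Lorentzian, whose sum $x_1^2+x_2^2$ has support $\{(2,0),(0,2)\}$ failing the exchange axiom (and whose Hessian has two positive eigenvalues). What you actually need is the narrower fact that for a \emph{fixed} Lorentzian $f$ and $v\in\RR_{\ge0}^n$, the directional derivative $\sum_i v_i\partial_i f$ is Lorentzian. This does hold, but the correct route is via closure under nonnegative linear substitutions --- the same fact the paper invokes in~\Cref{lem:G-vol-Lor-preserved-by-G-blowup-indsubgp}: introduce an auxiliary variable $t$, set $g(t,\bx)\defeq f(\bx+tv)$, note that $g$ is Lorentzian, take $\partial_t g$ (Lorentzian by the recursive definition), and restrict to $t=0$. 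This uses the structure of partial derivatives of a \emph{single} Lorentzian polynomial, not any convexity of the cone, so the argument as written has a gap at its crux.

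Two smaller points. The case split in the final signature step should begin by observing that $v_2^T(\Hess h)v_2\ge0$, which follows because $h$ has nonnegative coefficients (so $\Hess h$ has nonnegative entries) and $v_2\ge0$; without this, the case $v_2^T(\Hess h)v_2<0$ is not excluded and the two cases you consider do not exhaust the possibilities. And the fact you are invoking is the bound on the positive index of inertia of a restriction (an interlacing-type statement), not Sylvester's law of inertia proper --- a naming issue only, but worth fixing.
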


For example, $f$ can be the volume of the Minkowski sum $x_1K_1+\cdots+x_dK_d$ of convex bodies $K_1,\dots, K_d$, a Lorentzian polynomial given in~\cite[Theorem~4.1]{branden2020lorentzian}. Then~\Cref{prop:Lor-AF-ineq} recovers the classical Alexandrov--Fenchel inequality for mixed volumes.

If $f=h_{K_t}(\bx;G)$ is the $G$-chromatic function, then $F_f$ corresponds to the $G$-volume $V_{K_t}(\blank;G)$.
By~\Cref{thm:AFM-hom-Lor}, $f$ is Lorentzian, so~\Cref{prop:Lor-AF-ineq} yields an Alexandrov--Fenchel-type inequality for the \(G\)-volume of a complete graph.  That is, for \(\ba_1,\ba_2,\dots,\ba_t\in (\RR_{\ge0})^n\),
\begin{equation}\label{eq:AF}
    V_{K_t}(\ba_1,\ba_2,\ba_3\dots,\ba_t;G)^2
    \ge V_{K_t}(\ba_1,\ba_1,\ba_3,\dots,\ba_t;G)
        \cdot V_{K_t}(\ba_2,\ba_2,\ba_3,\dots,\ba_t;G).
\end{equation}
Applying~\eqref{eq:AF} iteratively gives the following inequality.
\begin{corollary}\label{cor:G-vol}
    Let \(G\) be an $n$-vertex antiferromagnetic graph and let \(\ba,\bb\in (\RR_{\ge0})^n\).  Then
    \[
         V_{K_t}(\bb,\ba,\dots,\ba;G)\cdot V_{K_t}(\ba,\bb,\dots,\bb;G)
        \ge 
        V_{K_t}(\ba,\ba,\dots,\ba;G)\cdot V_{K_t}(\bb,\bb,\dots,\bb;G).
    \]
\end{corollary}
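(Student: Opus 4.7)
The plan is to introduce the one-variable sequence $f(k) \defeq V_{K_t}(\underbrace{\bb,\dots,\bb}_{k}, \underbrace{\ba,\dots,\ba}_{t-k}; G)$ for $0 \le k \le t$ and to show that $(f(0),f(1),\dots,f(t))$ is a log-concave sequence of nonnegative reals. The desired inequality is precisely $f(1)\, f(t-1) \ge f(0)\, f(t)$, which is a standard consequence of log-concavity. Since $V_{K_t}(\,\cdot\,;G)$ is symmetric in all $t$ arguments (as noted before \eqref{eq:G-volume-Kt-pdiff-form}), the definition of $f(k)$ depends only on the multiset of inputs, so the notation is unambiguous.

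The key step is to establish $f(k)^2 \ge f(k-1)\, f(k+1)$ for every $1 \le k \le t-1$. To this end, I would apply \eqref{eq:AF} with $\ba_1 = \bb$, $\ba_2 = \ba$, and the remaining $t-2$ arguments $\ba_3,\dots,\ba_t$ chosen as $k-1$ copies of $\bb$ together with $t-k-1$ copies of $\ba$. Counting occurrences and invoking the symmetry of $V_{K_t}(\,\cdot\,;G)$, the left-hand side of \eqref{eq:AF} becomes $f(k)^2$, while the two factors on the right-hand side become $f(k+1)$ and $f(k-1)$, respectively.

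To finish, I would invoke the elementary fact that a log-concave sequence of nonnegative reals has support equal to a (possibly empty) subinterval of $\{0,1,\dots,t\}$: if $f(0)=0$ or $f(t)=0$, the right-hand side of the corollary vanishes and the claim is trivial; otherwise every $f(k)$ is strictly positive, so the ratios $f(k)/f(k-1)$ form a non-increasing sequence by log-concavity, and comparing $f(1)/f(0) \ge f(t)/f(t-1)$ rearranges to the corollary. Given \Cref{thm:AFM-hom-Lor} and \Cref{prop:Lor-AF-ineq}, the proof is mostly bookkeeping; the only thing to be careful about is matching the rearranged arguments of $V_{K_t}$ in the repeated application of \eqref{eq:AF}, which is made seamless by the symmetry for complete graphs.
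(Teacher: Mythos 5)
Your strategy matches the paper's: derive $f(k)^2 \ge f(k-1)\,f(k+1)$ from \eqref{eq:AF} with the parameter choice you describe, and chain these to obtain $f(1)\,f(t-1)\ge f(0)\,f(t)$. The bookkeeping of arguments and the use of the symmetry of $V_{K_t}$ are both correct, so the log-concavity of $(f(0),\dots,f(t))$ is established exactly as the paper intends.

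The gap is in the last step. The ``elementary fact'' you cite --- that a nonnegative log-concave sequence has support equal to a subinterval --- is \emph{false} if log-concave only means $f(k)^2\ge f(k-1)\,f(k+1)$ for interior $k$. The sequence $(1,0,0,1)$ satisfies these inequalities (both middle checks read $0\ge0$) and is nonnegative, yet its support $\{0,3\}$ is not an interval, and indeed $f(1)\,f(2)=0 < 1 = f(0)\,f(3)$. So log-concavity alone cannot close the case $f(0),f(t)>0$: your inference that ``every $f(k)$ is strictly positive'' does not follow from what you have proved, and the telescoping division by $f(k)$'s is not justified.

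The repair needs more of the Lorentzian machinery than \eqref{eq:AF} alone. Since $h_{K_t}(\bx;G)$ is Lorentzian (\Cref{thm:AFM-hom-Lor}) and $\ba,\bb\in(\RR_{\ge0})^n$, the bivariate restriction $p(\lambda,\mu)\defeq h_{K_t}(\lambda\ba+\mu\bb;G)=\sum_{k=0}^t\binom{t}{k}f(k)\,\lambda^{t-k}\mu^k$ is again Lorentzian by the nonnegative linear-substitution stability of \cite[Theorem~2.10]{branden2020lorentzian}. \Mc-convexity of $\supp(p)\subseteq\NN_0^2$ then says precisely that $\{k:f(k)>0\}$ is an interval, which excludes the $(1,0,0,1)$-type pathology and legitimises the division in your telescoping argument. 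With this added ingredient your proof is complete and coincides with what the paper means by applying \eqref{eq:AF} ``iteratively.''
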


We are now ready to prove our main result of this section,
which implies~\Cref{thm:main} by setting \(\ba=\bb=(1,1,\dots,1)\).
In what follows, we enumerate the vertices of $K_t\times K_2$ in the lexicographic order $(1,1),\dots,(t,1),(1,2),\dots,(t,2)$, where \(([t]\times\{1\})\sqcup ([t]\times\{2\})\) is the bipartition and $(i,1)$ and $(i,2)$ are not adjacent for $1\leq i\leq t$.

\begin{theorem}\label{thm:G-vol-ineq-KtxK2}
    Let \(G\) be an $n$-vertex antiferromagnetic graph and let \(t\ge2\). Then for \(\ba,\bb\in (\RR_{\ge0})^n\),
    \begin{equation}\label{eq:G-vol-ineq-KtxK2}
        V_{K_t}(\ba,\dots,\ba; G) \cdot V_{K_t}(\bb,\dots,\bb; G) \le V_{K_t\times K_2}(\underbrace{\ba,\dots,\ba}_t,\underbrace{\bb,\dots,\bb}_t; G).
    \end{equation}
\end{theorem}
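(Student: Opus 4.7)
The plan is to establish the inequality by induction on $t$, after peeling off one vertex from each $G$-volume. The key observation is that a single application of Corollary~\ref{cor:G-vol} at the end can close the gap, provided the peeling lands on the right quantity.

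First, I would split off $u \defeq \phi_1(t)$ and $v \defeq \phi_2(t)$ in the definition of $V_{K_t\times K_2}$. The edges incident to $(t,1)$ and $(t,2)$ can be absorbed into modified weight vectors $\ba^{(v)}_w \defeq a_w\,G(v,w)$ and $\bb^{(u)}_w \defeq b_w\,G(u,w)$ (both still in $\RR^n_{\ge 0}$), yielding the recursion
\[
    V_{K_t\times K_2}(\ba^t,\bb^t;G) = \sum_{u,v\in V(G)} a_u\, b_v \cdot V_{K_{t-1}\times K_2}\!\left(\ba^{(v),\,t-1},\bb^{(u),\,t-1};G\right).
\]
An analogous peeling of $V_{K_t}$ at its asymmetric slot gives
\[
    V_{K_t}(\ba,\bb,\dots,\bb;G) = \sum_u a_u\, h_{K_{t-1}}(\bb^{(u)};G), \qquad V_{K_t}(\bb,\ba,\dots,\ba;G) = \sum_v b_v\, h_{K_{t-1}}(\ba^{(v)};G).
\]

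The induction is then straightforward. The base case $t=1$ is trivial since $K_1\times K_2$ is edgeless on two vertices and both sides of~\eqref{eq:G-vol-ineq-KtxK2} equal $(\sum_w a_w)(\sum_w b_w)$. For $t\ge 2$, applying the induction hypothesis to the modified weights $\ba^{(v)},\bb^{(u)}$ gives
\[
    V_{K_{t-1}\times K_2}\!\left(\ba^{(v),\,t-1},\bb^{(u),\,t-1};G\right) \ge h_{K_{t-1}}(\ba^{(v)};G)\cdot h_{K_{t-1}}(\bb^{(u)};G),
\]
and substituting into the recursion makes the outer sum over $(u,v)$ factorise into precisely the two $V_{K_t}$-peelings above:
\[
    V_{K_t\times K_2}(\ba^t,\bb^t;G) \ge V_{K_t}(\bb,\ba,\dots,\ba;G)\cdot V_{K_t}(\ba,\bb,\dots,\bb;G).
\]
Invoking Corollary~\ref{cor:G-vol} bounds this product below by $V_{K_t}(\ba,\dots,\ba;G)\cdot V_{K_t}(\bb,\dots,\bb;G)$, completing the induction.

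The main conceptual obstacle is a potential mismatch between the tools: Corollary~\ref{cor:G-vol} only compares the \emph{diagonal} product $V_{K_t}(\ba^t)\cdot V_{K_t}(\bb^t)$ to the \emph{once-swapped} product $V_{K_t}(\bb,\ba,\dots,\ba)\cdot V_{K_t}(\ba,\bb,\dots,\bb)$, and it is not at all clear \emph{a priori} that this comparatively weak Alexandrov--Fenchel-type inequality suffices to dominate $V_{K_t\times K_2}$. The whole argument hinges on the fortunate fact that the peeling recursion for $V_{K_t\times K_2}$, combined with the induction hypothesis, naturally produces exactly the once-swapped product, so that a \emph{single} use of Corollary~\ref{cor:G-vol} at the very end of the induction is enough to close the gap.
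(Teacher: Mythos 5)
Your proof is correct and follows essentially the same approach as the paper's: induction on $t$, peeling off the vertices $(t,1)$ and $(t,2)$ and absorbing the incident edge weights into modified weight vectors, applying the induction hypothesis, and closing with a single application of Corollary~\ref{cor:G-vol}. The only cosmetic differences are that you take the base case to be $t=1$ instead of $t=2$, and you keep $a_u b_v$ as an explicit prefactor in the recursion rather than distributing it across the $t-1$ modified coordinates via $(t-1)$-th roots as the paper does.
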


The proof proceeds by induction on \(t\). As a warm-up, it would be helpful to see the proof of the following proposition, which uses the inductive technique in a simpler setting.

\begin{proposition}\label{prop:G-vol-ineq-KtxK2_idea}
    Let \(G\) be an unweighted loopless graph and \(t\ge2\). Suppose \eqref{eq:G-vol-ineq-KtxK2} holds for \(t\) replaced by \(t-1\) and all \(\ba,\bb\in \{0,1\}^n\). Then \eqref{eq:G-vol-ineq-KtxK2} holds for \(t\) and \(\ba=\bb=(1,\dots,1)\), i.e.,
    \begin{align}\label{eq:bipartite-swapping-Kt}
       \hom(K_t,G)^2 \le \hom(K_t\times K_2,G).
    \end{align}
\end{proposition}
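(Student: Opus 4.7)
The plan is to condition on the images of one apex vertex of $K_t$ (respectively, both apex vertices of $K_t\times K_2$) and reduce both sides of \eqref{eq:bipartite-swapping-Kt} to sums of $G$-volume terms on which the inductive hypothesis applies. First I will single out the vertex $t\in V(K_t)$: since $t$ is adjacent to every other vertex of $K_t$, fixing $\phi(t)=v$ for a homomorphism $\phi\colon K_t\to G$ forces $\phi(i)\in N_G(v)$ for all $i\in[t-1]$, and the restriction of $\phi$ to $[t-1]$ is an arbitrary homomorphism $K_{t-1}\to G$ with image in $N_G(v)$. This gives
\[
\hom(K_t,G) \;=\; \sum_{v\in V(G)} V_{K_{t-1}}(\one_{N_G(v)},\dots,\one_{N_G(v)};G),
\]
where $\one_U\in\{0,1\}^n$ denotes the indicator vector of $U\subseteq V(G)$.

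The analogous decomposition for $K_t\times K_2$ conditions on the images of the two apex vertices $(t,1)$ and $(t,2)$: setting $\phi(t,1)=u$ and $\phi(t,2)=w$ forces $(i,1)\mapsto N_G(w)$ and $(i,2)\mapsto N_G(u)$ for $i\in[t-1]$, using that $(t,1)\sim(j,2)$ and $(t,2)\sim(j,1)$ in $K_t\times K_2$ whenever $j\neq t$. This yields
\[
\hom(K_t\times K_2,G) \;=\; \sum_{u,w\in V(G)} V_{K_{t-1}\times K_2}\bigl(\underbrace{\one_{N_G(w)},\dots,\one_{N_G(w)}}_{t-1},\underbrace{\one_{N_G(u)},\dots,\one_{N_G(u)}}_{t-1};G\bigr).
\]
With these two expressions in hand, I will apply the inductive hypothesis \eqref{eq:G-vol-ineq-KtxK2} with $\ba=\one_{N_G(w)}$ and $\bb=\one_{N_G(u)}$ (both in $\{0,1\}^n$) to each summand and sum over $(u,w)\in V(G)^2$. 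The right-hand side of the summed inequality becomes $\hom(K_t\times K_2,G)$, while the left-hand side factors as $\bigl(\sum_w V_{K_{t-1}}(\one_{N_G(w)},\dots;G)\bigr)\bigl(\sum_u V_{K_{t-1}}(\one_{N_G(u)},\dots;G)\bigr) = \hom(K_t,G)^2$, which is exactly the desired inequality.

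There is no real obstacle beyond bookkeeping; the key structural point is that the inductive hypothesis must cover arbitrary $\{0,1\}$-valued inputs, not merely the all-ones vector, because indicator vectors of neighborhoods $N_G(v)$ are precisely what apex-conditioning produces. This is exactly why \Cref{thm:G-vol-ineq-KtxK2} is formulated for general $\ba,\bb\in(\RR_{\ge0})^n$; in the weighted setting treated by the full induction, one would instead use the $v$-th row of the adjacency matrix of $G$ in place of $\one_{N_G(v)}$, and the rest of the argument would go through mutatis mutandis.
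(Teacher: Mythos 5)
Your proof is correct and matches the paper's own argument: both condition on the images of the apex vertices $(t,1)$ and $(t,2)$ (resp.\ $t$) to rewrite each side as a double (resp.\ single) sum of $G$-volumes of $K_{t-1}\times K_2$ (resp.\ $K_{t-1}$) at neighborhood-indicator inputs, then apply the hypothesis termwise and factor. One minor inaccuracy in your closing aside: in the weighted version one does not simply replace $\one_{N_G(v)}$ by the $v$-th row of the adjacency matrix---the updated vectors $\ba^{(s)}$, $\bb^{(r)}$ in the paper also fold in the original coordinates of $\ba$, $\bb$ together with the scalars $b_s^{1/(t-1)}$ and $a_r^{1/(t-1)}$---but that remark plays no role in the present proof.
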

\begin{proof}
    The proof relies on two simple facts: \(K_t\) minus a vertex is \(K_{t-1}\), and \(K_t\times K_2\) minus two vertices \((t,1)\) and \((t,2)\) is \(K_{t-1}\times K_2\).
    For \(r\in G\), let \(N(r)\) be its set of neighbours. Taking a homomorphic copy of \(K_t\times K_2\) in \(G\) can be done in two steps. First, choose vertices \(r,s\in G\) to embed \((t,1)\) and \((t,2)\), respectively. Next, take a homomorphic copy of \(K_{t-1}\times K_2\) so that the vertices in \([t-1]\times\{1\}\) and \([t-1]\times\{2\}\) are mapped into \(N(s)\) and \(N(r)\), respectively. Applying the assumption gives
    \begin{align*}
        V_{K_t\times K_2}(\one,\dots,\one; G)
        &= \sum_{r,s\in V(G)} V_{K_{t-1}\times K_2}(\underbrace{\one_{N(s)},\dots,\one_{N(s)}}_{t-1},
            \underbrace{\one_{N(r)},\dots,\one_{N(r)}}_{t-1} ; G)
        \\&\ge \sum_{r,s\in V(G)} V_{K_{t-1}}(\one_{N(s)},\dots,\one_{N(s)} ; G)
            \cdot V_{K_{t-1}}(\one_{N(r)},\dots,\one_{N(r)} ; G)
        \\&= \sum_{r\in V(G)} V_{K_{t-1}}(\one_{N(r)},\dots,\one_{N(r)} ; G)
            \sum_{s\in V(G)} V_{K_{t-1}}(\one_{N(s)},\dots,\one_{N(s)} ; G).
    \end{align*}
    The term \(V_{K_{t-1}}(\one_{N(r)},\dots,\one_{N(r)})\) counts the number of the (labelled) \(K_{t-1}\) in the neighbour of \(r\). Summing this over all \(r\in V(G)\) gives the number of the labelled \(K_t\) in \(G\), i.e.,
    \[
        \sum_{r\in V(G)} V_{K_{t-1}}(\one_{N(r)},\dots,\one_{N(r)} ; G)
        = V_{K_t}(\one,\dots,\one; G).
    \]
    Hence, $V_{K_t\times K_2}(\one,\dots,\one; G)\geq V_{K_t}(\one,\dots,\one; G)^2$, as desired.
\end{proof}

The proof of~\Cref{thm:G-vol-ineq-KtxK2} uses the same idea, though `updating' the vertex weights $\ba$ and $\bb$ complicates the notations.

\begin{proof}[Proof of~\Cref{thm:G-vol-ineq-KtxK2}]
    Write
    \[
         V_{K_t\times K_2}(\ba;\bb;G) \defeq V_{K_t\times K_2}(\underbrace{\ba,\dots,\ba}_t, \underbrace{\bb,\dots,\bb}_t;G).
    \]
    
    We use induction on \(t\). By \Cref{cor:G-vol} and the fact that \(K_2\times K_2 \cong K_2\sqcup K_2\), we have
    \[
        V_{K_2}(\ba,\ba;G)\cdot V_{K_2}(\bb,\bb;G)\le V_{K_2}(\ba,\bb;G)^2 = V_{K_2\times K_2}(\ba,\ba,\bb,\bb;G),
    \]
    which verifies the base case \(t=2\).
    
    Suppose \(t\ge 3\). Let \(V(G)=[n]\), \(\ba=(a_1,\dots,a_n)\), and \(\bb=(b_1,\dots,b_n)\).
    \begin{figure}[!htb]
        \centering
        \includegraphics{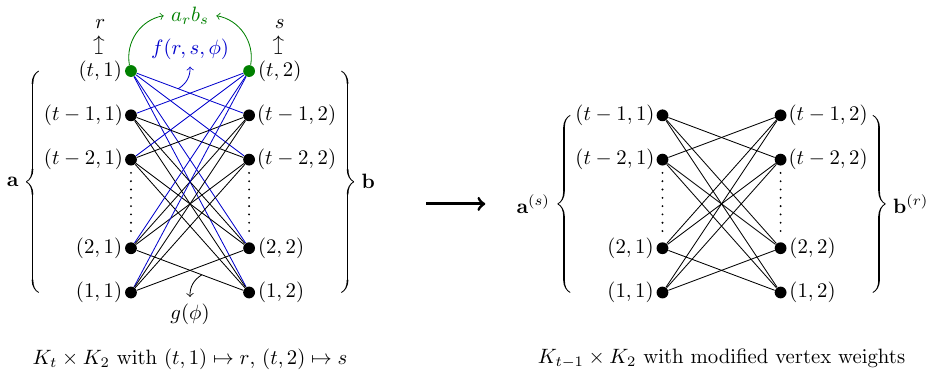}
        \caption{Illustration of the embedding strategy}
        \label{fig:(thm:G-vol-ineq-KtxK2)-pf_description}
    \end{figure}
    We compute the $G$-volume $V_{K_t\times K_2}(\ba;\bb;G)$ by counting (weighted) homomorphisms \(\phi\) from \(K_t\times K_2\) to \(G\) recursively as follows: first, fix \(r=\phi(t,1)\) and \(s=\phi(t,2)\); next, find a (weighted) homomorphic copy of \(K_{t-1}\times K_2\) such that the vertices in \([t-1]\times\{1\}\) and \([t-1]\times\{2\}\) are adjacent to \(s\) and \(r\), respectively.
    See~\Cref{fig:(thm:G-vol-ineq-KtxK2)-pf_description} for an illustration of this embedding. To formalise this two-step embedding process,
    we write
    \begin{equation}\label{eq:G-volume-KtK2}
        V_{K_t\times K_2}(\ba;\bb; G)
        = \sum_{r,s\in V(G)} a_rb_s
            \sum_{\phi\in \Hom(K_{t-1}\times K_2, G)} f(r,s,\phi)\, g(\phi),
    \end{equation}
    where
    \begin{align*}
        f(r,s,\phi) &= \prod_{i\in [t-1]} G(\phi(i,1),s)\, G(r,\phi(i,2)) \quad\text{and} \\
        g(\phi)
        &=  \prod_{i\in [t-1]} a_{\phi(i,1)}
            \prod_{j\in [t-1]} b_{\phi(j,2)}
            \prod_{i,j\in [k], i\neq j} G(\phi(i,1),\phi(j,2)).
    \end{align*}
    Here $f(r,s,\phi)$ encodes the edge weights coming from the fixed vertices $r$ and $s$ and $g(\phi)$ is the weight of the homomorphism $\phi$ from $K_{t-1}\times K_2$.
    We claim that each $a_rb_s\sum_\phi f(r,s,\phi)g(\phi)$ can be rewritten as a \(G\)-volume of \(K_{t-1}\times K_2\) with modified vertex weights, which fully formalises our embedding strategy.

    To see how the weight modification works, for \(r,s\in V(G)\), let \(\ba^{(s)} = (\alpha_\nu^{(s)} a_\nu:1\le \nu\le n)\) and \(\bb^{(r)} = (\beta_\nu^{(r)} b_\nu:1\le \nu\le n)\) be defined by
    \[
        \alpha_\nu^{(s)}\defeq b_s^{1/(t-1)} \, G(\nu,s)
        \quad\text{and}\quad
        \beta_\nu^{(r)}\defeq a_r^{1/(t-1)} \, G(r,\nu).
    \]
    Then
    \[
        \prod_{i\in [t-1]} \alpha_{\phi(i,1)}^{(s)}
        = b_s\, \prod_{i\in [t-1]} G(\phi(i,1),s)
        \quad\text{and}\quad
        \prod_{i\in [t-1]} \beta_{\phi(i,2)}^{(r)}
        = a_r\, \prod_{i\in [t-1]} G(r,\phi(i,2)),
    \]
    which gives
    \[
        \prod_{i\in [t-1]} \alpha_{\phi(i,1)}^{(s)} \beta_{\phi(i,2)}^{(r)}
        = a_r b_s f(r,s,\phi).
    \]
    Therefore,
    \[
        a_r b_s f(r,s,\phi) g(\phi)
        = \prod_{i\in [t-1]} \bigl( \alpha_{\phi(i,1)}^{(s)} a_{\phi(i,1)} \bigr)
                \bigl( \beta_{\phi(i,2)}^{(r)} b_{\phi(i,2)} \bigr),
    \]
    and hence, 
    \begin{align*}
        V_{K_t\times K_2}(\ba;\bb; G)
        &= \sum_{r,s\in V(G)}\, \sum_{\phi\in \Hom(K_{t-1}\times K_2, G)} a_r b_s f(r,s,\phi) g(\phi)
        \\&= \sum_{r,s\in V(G)} V_{K_{t-1}\times K_2}(\ba^{(s)};\bb^{(r)}; G),
    \end{align*}
    as claimed.
    
    \medskip
    
    By the induction hypothesis,
    \begin{align*}
        V_{K_t\times K_2}(\ba;\bb; G)
        &\ge \sum_{r,s\in V(G)} V_{K_{t-1}}(\ba^{(s)},\dots,\ba^{(s)}; G)
            \cdot V_{K_{t-1}}(\bb^{(r)},\dots,\bb^{(r)}; G)
        \\&= \sum_{s\in V(G)} V_{K_{t-1}}(\ba^{(s)},\dots,\ba^{(s)}; G)
            \cdot \sum_{r\in V(G)} V_{K_{t-1}}(\bb^{(r)},\dots,\bb^{(r)}; G).
    \end{align*}
    Simplifying the first factor in the RHS yields
    \begin{align*}
        &\hphantom{{}={}}\!%
        \sum_{s\in V(G)} V_{K_{t-1}}(\ba^{(s)},\dots,\ba^{(s)}; G)
        \\&= \sum_{s\in V(G)}
                \,\sum_{\phi\in \Hom(K_{t-1},G)}
                \,\prod_{\{i,j\}\in \binom{[t-1]}{2}} G(\phi(i),\phi(j))
                \prod_{i\in [t-1]} \alpha_{\phi(i)}^{(s)} a_{\phi(i)}
        \\&= \sum_{s\in V(G)}
                \,\sum_{\phi\in \Hom(K_{t-1},G)}
                \,\prod_{\{i,j\}\in \binom{[t-1]}{2}} G(\phi(i),\phi(j))
                \prod_{i\in [t-1]} b_s^{1/(t-1)} G(s,\phi(i)) a_{\phi(i)}
        \\&= \sum_{\phi'\in \Hom(K_t,G)}
                \,\prod_{\{i,j\}\in \binom{[t]}{2}} G(\psi(i),\psi(j))
                \,\Biggl( \prod_{i\in [t-1]} a_{\psi(i)} \Biggr)
                b_{\psi(t)}
        \\&= V_{K_t}(\ba,\dots,\ba, \bb ; G),
    \end{align*}
    and symmetrically, it also holds that
    \[
        \sum_{r\in V(G)} V_{K_{t-1}}(\bb^{(r)},\dots,\bb^{(r)}; G)
        = V_{K_t}(\bb,\dots,\bb, \ba ; G).
    \]
    Therefore,
    \begin{equation}\label{eq:G-vol-int-ineq} 
        V_{K_t\times K_2}(\ba;\bb; G)
        \ge V_{K_t}(\ba,\dots,\ba, \bb ; G)
            \cdot V_{K_t}(\bb,\dots,\bb, \ba ; G),
    \end{equation}
    which, together with \Cref{cor:G-vol}, concludes the proof.
\end{proof}

\section{\texorpdfstring{$\HH$}{H}-blow-up of bipartite graphs}\label{sec:BS}
Our remaining goal in what follows is to prove  \Cref{thm:main2,thm:main}, both of which obtain inequalities of the form
\begin{equation}\label{eq:bipartite-swapping}
    \hom(H,G)^2 \le \hom(H\times K_2,G),
\end{equation}
for various graphs $H$ and some antiferromagnetic graphs $G$. In his pioneering work~\cite{zhao2010number,zhao2011bipartite}, Zhao proved the inequality~\eqref{eq:bipartite-swapping} for any graphs $H$ paired with $G=\indepg$ or $G=K_q$, where $q=q(H)$ is large enough, by using his `bipartite swapping trick'.
Analogously, we say that \(H\) is \emph{bipartite swapping in \(G\)} if~\eqref{eq:bipartite-swapping} holds.

Our key idea in proving~\Cref{thm:main2,thm:main} is to extend the techniques in the previous section to replace $K_t$ by other graphs $H$.
Let \(H\) be a $t$-vertex graph.
As done for $H=K_t$ in the previous section, we set \(V(H)=[t]\) and enumerate the vertices of \(H\times K_2\) in the lexicographic order $(1,1),\dots,(t,1),(1,2),\dots,(t,2)$, where \(([t]\times\{1\})\sqcup ([t]\times\{2\})\) is the bipartition coming from the tensor product, and $(i,1)$ and $(j,2)$ are adjacent if and only if \(ij\in E(H)\). We say that \(([t]\times\{1\})\sqcup ([t]\times\{2\})\) is the \emph{canonical bipartition} and $(1,1),\dots,(t,1),(1,2),\dots,(t,2)$ is the \emph{canonical vertex labelling} of \(H\times K_2\) with respect to the vertex labelling of $H$.

Let $H'$ be a bipartite graph with a fixed bipartition $V_1\sqcup V_2$.
For an unweighted graph \(G\) and \(A,B\subseteq V(G)\), let \(\bHom(H', G[A,B])\) denote the set of `bipartite' homomorphisms from \(H'\) to \(G\). That is, those $\phi\in\Hom(H',G)$ which maps $V_1$ to $A$ and $V_2$ to $B$, respectively.\footnote{We abuse the notation $G[A,B]$ slightly, since $G[A,B]$ is not necessarily a bipartite subgraph of $G$, particularly if \(A\) and \(B\) are not disjoint. 
It would be better to think $G[A,B]$ as a new bipartite graph on the bipartition $A'\cup B'$, where $A'$ and $B'$ are disjoint copies of $A$ and $B$, respectively, where $a\in A'$ and $b\in B'$ are adjacent if the corresponding vertices in $G$ are adjacent. Then \(\bhom\) counts the number of bipartite homomorphisms from $H'$ to $G[A,B]$, respecting the order of the bipartitions.}
Writing $\bhom(H', G[A,B])\defeq \abs{\bHom(H', G[A,B])}$ and considering the canonical bipartition of $H\times K_2$, we say that
\(H\) is \emph{cross-bipartite swapping in \(G\)} if for all \(A,B\subseteq V(G)\),
\begin{equation}\label{eq:cross-bipartite-swapping}
    \hom(H,G[A]) \hom(H,G[B])
    \le \bhom(H\times K_2, G[A,B]).
\end{equation}
We remark that every bipartite graph is bipartite swapping in any graph \(G\), but it is not necessarily cross-bipartite swapping.
For instance, if \(E(H)\) is nonempty and \(A\) and \(B\) are disjoint subsets of \(V(G)\) where \(G[A]\) and \(G[B]\) contain homomorphic copies of \(H\) and there are no edges between \(A\) and \(B\), then \(\bhom(H\times K_2, G[A,B])=\bhom(H,G[A,B])^2 =0\) while \(\hom(H,G[A]) \hom(H,G[B]) > 0\).

The inequality~\eqref{eq:cross-bipartite-swapping} can be paraphrased as
\[
    V_{H}(\one_A,\dots,\one_A; G) \cdot V_{H}(\one_B,\dots,\one_B; G)
    \le V_{H\times K_2}(\underbrace{\one_A,\dots,\one_A}_t,\underbrace{\one_B,\dots,\one_B}_t; G),
\]
which is exactly \eqref{eq:G-vol-ineq-KtxK2} with \(\ba\defeq \one_A\) and \(\bb\defeq \one_B\).
Then \Cref{prop:G-vol-ineq-KtxK2_idea} shows that, if \(K_{t-1}\) is cross-bipartite swapping in $G$, then $K_t$ is bipartite swapping in \(G\) provided that \(G\) is unweighted and loopless.
Thus, in a sense,~\eqref{eq:cross-bipartite-swapping} represents a weaker  log-concavity than the Lorentzian property of $K_t$ shown in the previous sections.

The proof of~\Cref{prop:G-vol-ineq-KtxK2_idea} relies on the fact that \(K_t\) consists of \(K_{t-1}\) and a vertex connected to all the vertices in \(K_{t-1}\). This relation between \(K_{t-1}\) and \(K_t\) motivates us to define \(\HH\)-blow-ups. For convenience, we recall that, for a class $\HH$ of graph containing \(K_1\) and a graph \(F\) on the vertex set \(\{v_1,\dots,v_k\}\), an \emph{\(\HH\)-blow-up of \(F\)} is a graph obtained by replacing each \(v_i\) by some \(H_i\in\HH\) and placing a complete bipartite graph between \(V(H_i)\) and \(V(H_j)\) whenever \(v_iv_j\in E(F)\).
\Cref{fig:H-blow-up} describes two examples. The first one is when \(F\) is the single edge graph, whose vertices are blown-up by \(H_1=K_1\) and \(H_2=K_4\), respectively, so that the \(\HH\)-blow-up gives \(K_5\). In general, \(K_t\) is a \(\{K_1,K_{t-1}\}\)-blow-up of an edge. The second example shows the $\HH$-blow-up of a 2-edge path \(F\) with a 2-edge path \(H_1\), \(H_2=K_2\), and \(H_3=K_3\).

\begin{figure}[!htb]
    \centering
    \includegraphics{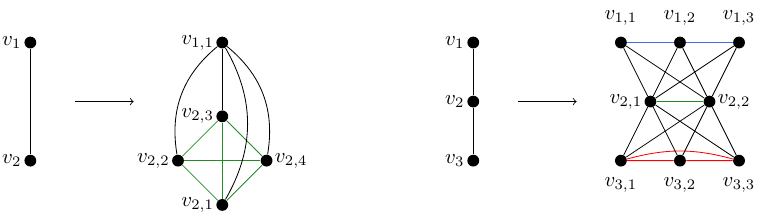}
    \caption{Examples of \(\HH\)-blow-ups}
    \label{fig:H-blow-up}
\end{figure}

The main theorem of this section for unweighted $G$ generalises~\Cref{prop:G-vol-ineq-KtxK2_idea} by using this terminology. 

\begin{theorem}\label{thm:BS-H-blow-up}
    Let \(G\) be a graph. Let \(F\) be a bipartite graph and \(\HH\) be the class of cross-bipartite swapping graphs in \(G\). Then every \(\HH\)-blow-up of \(F\) is bipartite swapping in \(G\).
\end{theorem}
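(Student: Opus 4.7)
The plan is to expand $\hom(H,G)^2$ directly using the bipartition of $F$, apply cross-bipartite swapping to each block $H_i\in\HH$ exactly once (first on one side of the bipartition, then on the other), and match the resulting expression with $\hom(H\times K_2,G)$ via a one-sided involution. Fix a bipartition $U_1\sqcup U_2$ of $V(F)$. A homomorphism $\phi\colon H\to G$ decomposes uniquely as a tuple $(\phi_1,\dots,\phi_k)$ with each $\phi_i\colon V(H_i)\to V(G)$ a homomorphism on $H_i$ and, for every $v_iv_j\in E(F)$, the images $\phi_i(V(H_i))$ and $\phi_j(V(H_j))$ completely joined in $G$; I will denote this latter joint constraint by $\phi_i\sim\phi_j$. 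The key consequence of bipartiteness is that, once $\{\phi_i\}_{i\in U_1}$ is fixed, the constraints on $(\phi_j)_{j\in U_2}$ decouple across $j\in U_2$, and symmetrically with $U_1$ and $U_2$ interchanged.

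Expanding $\hom(H,G)^2$ as a sum over pairs of such tuples $(\phi_i,\phi_i')_{i=1}^k$, first fix $(\phi_i,\phi_i')_{i\in U_1}$ and set
\[
X_j\defeq\bigcap_{v_i\in N_F(v_j)}N_G(\phi_i(V(H_i))),\qquad Y_j\defeq\bigcap_{v_i\in N_F(v_j)}N_G(\phi_i'(V(H_i))).
\]
The inner sum factorises across $j\in U_2$ into $\prod_{j\in U_2}\hom(H_j,G[X_j])\hom(H_j,G[Y_j])$, which by cross-bipartite swapping of each $H_j\in\HH$ is at most $\prod_{j\in U_2}\bhom(H_j\times K_2,G[X_j,Y_j])$. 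Re-expanding these bipartite-hom counts replaces each pair of individual homomorphisms on $H_j$ by a bipartite-hom pair $(\chi_j^{(1)},\chi_j^{(2)})\in\bHom(H_j\times K_2,G)$ carrying the same image constraints. I then repeat the procedure on the $U_1$-side, obtaining bipartite-hom pairs $(\zeta_i^{(1)},\zeta_i^{(2)})$ for $i\in U_1$, yielding an upper bound on $\hom(H,G)^2$ by a sum over all such bipartite-hom tuples with the \emph{parallel} compatibility $\zeta_i^{(1)}\sim\chi_j^{(1)}$ and $\zeta_i^{(2)}\sim\chi_j^{(2)}$ imposed for every $v_iv_j\in E(F)$.

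Comparing with $\hom(H\times K_2,G)$, an entirely analogous expansion over the same bipartite-hom tuples shows that what the tensor product actually imposes is the \emph{crossed} compatibility $\zeta_i^{(1)}\sim\chi_j^{(2)}$ and $\zeta_i^{(2)}\sim\chi_j^{(1)}$. To reconcile the two, apply the involution $(\chi_j^{(1)},\chi_j^{(2)})\mapsto(\chi_j^{(2)},\chi_j^{(1)})$ for every $j\in U_2$: by the $K_2$-symmetry of the tensor product this is a bijection on $\bHom(H_j\times K_2,G)$, and it converts parallel compatibility on the $U_2$-side into the desired crossed one, so the upper bound on $\hom(H,G)^2$ equals $\hom(H\times K_2,G)$. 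The main difficulty is therefore not conceptual but bookkeeping: verifying that the common-neighbour sets $X_j,Y_j$ (and their $U_1$-analogues) faithfully transport the block-to-block compatibility through each invocation of cross-bipartite swapping, and that the one-sided swap on $U_2$ lands exactly on the compatibility pattern defining $\hom(H\times K_2,G)$.
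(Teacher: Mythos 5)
Your proposal is correct and takes essentially the same route as the paper: the batched application of cross-bipartite swapping first over $U_2$ and then over $U_1$ amounts to iterating \Cref{lem:2-lifts_hom-comparison} one vertex of $F$ at a time, sweeping through one bipartition class and then the other, and your one-sided involution $(\chi_j^{(1)},\chi_j^{(2)})\mapsto(\chi_j^{(2)},\chi_j^{(1)})$ on $U_2$ is precisely the isomorphism $\swapped{H}{V(F)}\cong H\times K_2$ of \Cref{lem:2-lift-iso}. The only structural difference is that the paper states the monotonicity step for arbitrary $F$ (conditioning on a full homomorphism of the complement block $R$) and invokes bipartiteness only at the final identification, whereas you use bipartiteness from the start to decouple the sums over each side of $V(F)$.
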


In the subsequent section, we prove that  paths,  even cycles, and complete multipartite graphs are cross-bipartite swapping in every complete graph, which together with~\Cref{thm:BS-H-blow-up} results in \Cref{thm:main2}.

Throughout this section, let \(F\) be a graph on the vertex set \(\{v_1,\dots,v_m\}\) and let \(\HH\) be a graph class containing \(K_1\).
The graph \(H\) denotes an \(\HH\)-blow-up of \(F\) obtained by replacing each \(v_i\) by \(H_i\in\HH\).
For \(1\le i\le m\), let \(V_i\subseteq V(H)\) be the vertex set of \(H_i\) so that \(V(H) = \bigsqcup_{i=1}^m V_i\).

For each \(U\subseteq V(F)\), we define the graph \(\swapped{H}{U}\), whose examples are given in \Cref{fig:2-lifts}, as follows. We start by taking two vertex-disjoint copies of \(H\), each of which is induced on \(\bigsqcup_{i=1}^m W_i\) and \(\bigsqcup_{i=1}^m W'_i\), respectively, where \(W_i\) and \(W'_i\) are copies of \(V_i\), $i=1,2,\dots,m$.
The graph $\swapped{H}{U}$ is then obtained by replacing the copy of \(H_j\sqcup H_j\) induced on \(W_j\sqcup W'_j\) by \(H_j\times K_2\) for each $j$ such that \(v_j\in U\).

\begin{figure}[!htb]
    \centering
    \includegraphics{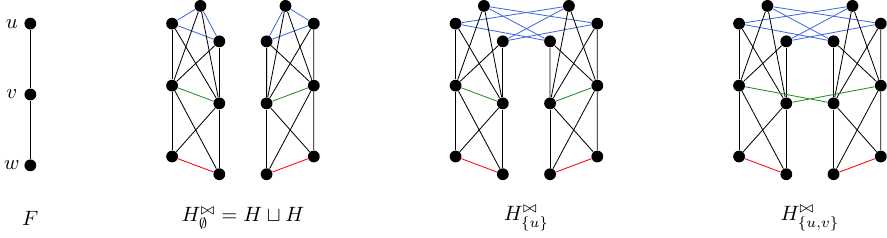}
    \caption{Examples of \(\swapped{H}{U}\)'s}
    \label{fig:2-lifts}
\end{figure}

We show that, if \(\HH\) consists of the cross-bipartite swapping graphs, then the homomorphism counts of the \(\swapped{H}{U}\)'s in \(G\) get larger as we extend $U$. 

\begin{lemma}\label{lem:2-lifts_hom-comparison}
    Let \(G\) be a graph and let \(F\) be a bipartite graph. 
    If \(H\) be an \(\HH\)-blow-up of \(F\), where \(\HH\) consists of cross-bipartite swapping graphs in \(G\), then for \(U\subseteq V(F)\) and \(u\in V(F)\setminus U\),
    \[
        \hom(\swapped{H}{U},G) \le \hom(\swapped{H}{U\cup\{u\}},G).
    \]
\end{lemma}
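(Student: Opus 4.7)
The plan is to stratify homomorphisms from $\swapped{H}{U}$ (resp.\ $\swapped{H}{U\cup\{u\}}$) to $G$ by their restriction to the common outer induced subgraph on $V(\swapped{H}{U})\setminus(W_k\cup W'_k)$, where $u=v_k$, and then compare the number of extensions fiberwise. Write $N^+\defeq \{i:v_iv_k\in E(F)\}$. The first observation I would verify is that $\swapped{H}{U}$ and $\swapped{H}{U\cup\{u\}}$ differ only on $W_k\cup W'_k$: in the former this set induces $H_k\sqcup H_k$, and in the latter $H_k\times K_2$ with canonical bipartition $W_k\sqcup W'_k$; the outer subgraph and all edges between outer vertices and $W_k\cup W'_k$ coincide. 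Moreover these latter edges consist precisely of the complete bipartite connections from $W_k$ to $\bigcup_{i\in N^+}W_i$ in the top copy and from $W'_k$ to $\bigcup_{i\in N^+}W'_i$ in the bottom copy, with no cross-copy edges into $W_k\cup W'_k$.

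Fix a homomorphism $\phi$ of the common outer subgraph to $G$ and consider the allowed image sets
\[
    A\defeq \bigcap_{i\in N^+}\,\bigcap_{w\in W_i} N_G(\phi(w)),\qquad
    B\defeq \bigcap_{i\in N^+}\,\bigcap_{w\in W'_i} N_G(\phi(w)).
\]
Any extension of $\phi$ must map $W_k$ into $A$ and $W'_k$ into $B$, and conversely any such assignment respecting the internal edges of $W_k\cup W'_k$ yields a valid homomorphism. Hence the number of extensions of $\phi$ equals $\hom(H_k,G[A])\cdot \hom(H_k,G[B])$ in the case of $\swapped{H}{U}$ and $\bhom(H_k\times K_2,G[A,B])$ in the case of $\swapped{H}{U\cup\{u\}}$, where in the second expression the canonical bipartition of $H_k\times K_2$ is identified with $W_k\sqcup W'_k$.

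Finally I would invoke the assumption that $H_k\in\HH$ is cross-bipartite swapping in $G$, which applied to these $A$ and $B$ yields
\[
    \hom(H_k,G[A])\cdot \hom(H_k,G[B])\le \bhom(H_k\times K_2,G[A,B])
\]
on every fiber. Summing over all $\phi$ then gives $\hom(\swapped{H}{U},G)\le \hom(\swapped{H}{U\cup\{u\}},G)$. The only real care point is bookkeeping: one has to ensure that the canonical bipartition of $H_k\times K_2$ used to define $\bhom$ coincides with the bipartition $W_k\sqcup W'_k$ coming from the two copies in the construction of $\swapped{H}{U\cup\{u\}}$, so that the fiberwise extension count is genuinely a bipartite hom count into $G[A,B]$. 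Once this identification is in place, the inequality on each fiber is a direct invocation of the hypothesis on $H_k$, so I do not foresee any substantive obstacle beyond this alignment.
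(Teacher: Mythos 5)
Your proposal is correct and follows essentially the same approach as the paper: fix the restriction $\phi$ to the subgraph $R$ induced on $\bigsqcup_{i\neq k}(W_i\sqcup W'_i)$, observe that extensions to $W_k$ and $W'_k$ land in the common neighbourhoods $A=N(\phi(W))$ and $B=N(\phi(W'))$ respectively (where $W=\bigsqcup_{i\in N^+}W_i$ and $W'=\bigsqcup_{i\in N^+}W'_i$), and apply the cross-bipartite swapping hypothesis to $H_k$ fiberwise before summing over $\phi$. Your bookkeeping remark about identifying the canonical bipartition of $H_k\times K_2$ with $W_k\sqcup W'_k$ is exactly the alignment the paper's construction builds in.
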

\begin{proof}
    The idea is to generalise the proof of \Cref{prop:G-vol-ineq-KtxK2_idea}.
    Let \(u=v_1\) by relabelling $V(F)$ if necessary. Then the two graphs \(\swapped{H}{U}\) and \(\swapped{H}{U\cup\{v_1\}}\) on the same vertex set \(\bigsqcup_{i=1}^m (W_i\sqcup W'_i)\) have the same set of edges except those contained in \(W_1\sqcup W'_1\).
    In particular, the subgraph $R$ of $\swapped{H}{U}$ induced on \(\bigsqcup_{i>1} (W_i\sqcup W'_i)\) is also an induced subgraph of $\swapped{H}{U\cup\{v_1\}}$ on the same vertex set.
    
    Let \(W\defeq \bigsqcup_{v_i\in N_F(v_1)} W_i\) and \(W'\defeq \bigsqcup_{v_i\in N_F(v_1)} W'_i\). Denote by \(N(A)\) the set of common neighbours of \(A\subseteq V(G)\).
    Then fixing a homomorphism $\phi\in\Hom(R,G)$ and extending it by finding two homomorphic copies of $H_1$, one in $G[N(\phi(W))]$ and the other in $G[N(\phi(W'))]$, gives an $\swapped{H}{U}$-homomorphism. Conversely, any $\swapped{H}{U}$-homomorphism in $G$ can be obtained by such a way. Analogously, extending $\phi\in\Hom(R,G)$ by finding a homomorphic copy of $H_1\times K_2$, each side of which is in $G[N(\phi(W)]$ and $G[N(\phi(W')]$, gives an $\swapped{H}{U\cup\{v_1\}}$-homomorphism and this procedure characterises all $\swapped{H}{U\cup\{v_1\}}$-homomorphisms.
    Therefore, 
    \begin{align*}
        \hom(\swapped{H}{U},G)
        &= \sum_{\phi\in \Hom(R,G)} \hom\bigl( H_1, G[N(\phi(W)] \bigr)
            \cdot \hom\bigl( H_1, G[N(\phi(W'))] \bigr)
        \\&\le \sum_{\phi\in \Hom(R,G)} \bhom\bigl( H_1\times K_2, G[N(\phi(W)), N(\phi(W'))] \bigr)
        \\&= \hom(\swapped{H}{U\cup \{v_1\}}, G),
    \end{align*}
    where the inequality follows from \(H_1\) being cross-bipartite swapping in \(G\).
\end{proof}

The following lemma, which may be a standard fact for those graphs of the form $H\times K_2$, is the reason why we need bipartiteness of $F$. We include the proof for completeness. 

\begin{lemma}\label{lem:2-lift-iso}
    For a bipartite graph \(F\) and a nonempty graph class \(\HH\), let \(H\) be an \(\HH\)-blow-up of \(F\). Then
    \[
        H\times K_2 \cong \swapped{H}{V(F)}.
    \]
\end{lemma}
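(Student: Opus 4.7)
The plan is to exhibit an explicit graph isomorphism $\sigma \colon V(H \times K_2) \to V(\swapped{H}{V(F)})$ by using a proper 2-coloring of $F$ to ``twist'' the two copies of $H$ sitting inside $\swapped{H}{V(F)}$. Fix a proper 2-coloring $\chi \colon V(F) \to \{1,2\}$, which exists since $F$ is bipartite. After identifying each $W_i$ and $W'_i$ with $V_i = V(H_i)$ via the canonical bijections, I define $\sigma(v,c)$ to be the copy of $v$ in $W_i$ when $\chi(v_i)=c$, and the copy of $v$ in $W'_i$ otherwise, for $v \in V_i$ and $c\in\{1,2\}$. This is manifestly a bijection, so the task reduces to verifying adjacency preservation in both directions.

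The key observation on the $\swapped{H}{V(F)}$ side is that, because $U=V(F)$, the subgraph induced on each $W_j \sqcup W'_j$ is exactly $H_j \times K_2$ with bipartition $(W_j, W'_j)$; in particular there are no edges inside $W_j$ nor inside $W'_j$. Between $W_i, W_j$ and between $W'_i, W'_j$ for $i \neq j$ there is a complete bipartite graph if and only if $v_iv_j \in E(F)$, and there are no edges at all between $W_i$ and $W'_j$ for $i \neq j$. On the $H \times K_2$ side, the edges are exactly the pairs $(v,1)(w,2)$ with $vw \in E(H)$, which split into intra-blob edges (with $v,w \in V_i$ and $vw \in E(H_i)$) and inter-blob edges (with $v \in V_i$, $w \in V_j$, $i \neq j$, $v_iv_j \in E(F)$).

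Equipped with these descriptions, the verification is a short case analysis. An intra-blob edge $(v,1)(w,2)$ with $v,w \in V_i$ is mapped, regardless of $\chi(v_i)$, to a pair straddling $W_i \sqcup W'_i$, hence to an edge of $H_i \times K_2 \subseteq \swapped{H}{V(F)}$. An inter-blob edge with $v \in V_i$, $w \in V_j$, $v_iv_j \in E(F)$ is mapped to a pair lying either entirely in the unprimed copy or entirely in the primed copy: this is precisely where bipartiteness of $F$ enters, forcing $\chi(v_i) \neq \chi(v_j)$, so $\sigma(v,1)$ and $\sigma(w,2)$ end up in the same copy and form a complete-bipartite edge between $W_i$ and $W_j$ (or between $W'_i$ and $W'_j$). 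The non-adjacency check is symmetric; the only delicate case is that a non-edge in $H \times K_2$ across blobs $V_i, V_j$ with $v_iv_j \notin E(F)$ may map to a $W_i\times W'_j$ pair with $i \neq j$, but such pairs are exactly the ones with no edge in $\swapped{H}{V(F)}$. I do not anticipate any genuine obstacle: the proof is a bookkeeping exercise whose conceptual heart is the single observation that the proper 2-coloring $\chi$ swaps the roles of the two copies of $H$ precisely along the edges of $F$.
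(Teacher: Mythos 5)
Your proposal is correct and follows essentially the same route as the paper: both define a bijection that fixes the two copies of a vertex $v\in V_i$ when $v_i$ lies on one side of the bipartition of $F$ and swaps them when $v_i$ lies on the other side, then verify adjacency preservation by a short case analysis using that $F$ is properly 2-coloured. The only cosmetic difference is how the check is concluded: you verify preservation of non-adjacency directly, while the paper observes that a bijective homomorphism between graphs with equal edge counts is automatically an isomorphism, saving the non-adjacency half of the case analysis.
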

\begin{proof}
    \Cref{fig:2-lifts_iso}, which shows examples of \(H\times K_2\) and \(\swapped{H}{V(F)}\), may explain the entire proof; from \(H\times K_2\), switching the positions of the black and the white vertices in the middle layer, which are the copies of \(x_2\) in $H$, gives \(\swapped{H}{V(F)}\). The task is to formalise this isomorphism.

    \begin{figure}[!htb]
        \centering
        \includegraphics{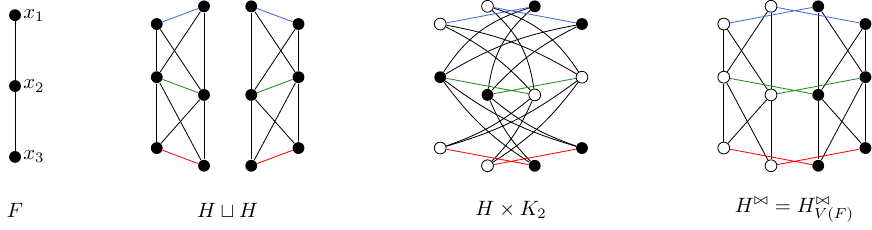}
        \caption{Description of \(H\times K_2 \cong \swapped{H}{V(F)}\)}
        \label{fig:2-lifts_iso}
    \end{figure}
    
    For brevity, write $\swapped{H}{}\defeq\swapped{H}{V(F)}$. Let \(V(F)=A\sqcup B\) be a bipartition of \(F\) (in~\Cref{fig:2-lifts_iso}, \(A=\{x_1,x_3\}\) and \(B=\{x_2\}\)) and enumerate its vertices by $x_1,x_2,\dots,x_m$. 
    Let $V(H) = \bigsqcup_{j=1}^m V_j$, where $V_j$ induces the copy of $H_j$ in $H$ that `blowed up' $x_j\in V(F)$.
    Then $H\times K_2$ is the graph on $W=V(H)\times [2]$ with the canonical bipartition $W_1\sqcup W_2$, i.e., $W_i=\{(v,i):v\in V(H)\}$, $i=1,2$.
    We may assume that the graph $\swapped{H}{}$ is also defined on $W$, where two types of edges exist: first, the edges of the form $\{(v,1),(u,2)\}$, where $uv\in E(H_j)$, i.e., those edges in the copy of $H_j\times K_2$ (coloured in~\Cref{fig:2-lifts_iso}); second, the edges of the form $\{(v,i),(u,i)\}$, $i=1,2$, for $v\in V_j$ and $u\in V_k$, whenever $x_jx_k\in E(F)$, i.e., those `blown-up' edges from $F$-edges.

    Let $\varphi\colon W\rightarrow W$ be the bijection that fixes each $(v,i)$, $i=1,2$, for $v\in V_j$, $x_j\in A$, and swaps $(v,1)$ and $(v,2)$ for $v\in V_j$, $x_j\in B$. As described in~\Cref{fig:2-lifts_iso}, we claim that this is an isomorphism between $H\times K_2$ and $\swapped{H}{}$. Indeed, each edge of the form $\{(v,1),(u,2)\}\in E(H\times K_2)$ with $v\in V_j$, $x_j\in A$, and $u\in V_k$, $x_k\in B$, maps to $\{(v,1),(u,1)\}\in E(\swapped{H}{})$. The other edges of the form $\{(v,1),(u,2)\}\in E(H\times K_2)$ with $u,v\in V_{j}$ such that $uv\in E(H_j)$ remain fixed if $x_j\in A$ or map to $\{(v,2),(u,1)\}\in E(\swapped{H}{})$ otherwise. Thus, $\varphi$ is a bijective homomorphism from $H\times K_2$ to $\swapped{H}{}$. As the two graphs have the same number of edges, $\varphi$ must be an isomorphism.
\end{proof}

\begin{proof}[Proof of \Cref{thm:BS-H-blow-up}]
    Let \(H\) be an \(\HH\)-blow-up of \(F\).
    Applying \Cref{lem:2-lifts_hom-comparison} repeatedly, we have \(\hom(\swapped{H}{\emptyset},G)\le \hom(\swapped{H}{V(F)},G)\). Since \(\hom(\swapped{H}{\emptyset},G) = \hom(H\sqcup H,G) = \hom(H,G)^2\) and \(H\times K_2 \cong \swapped{H}{V(F)}\) by \Cref{lem:2-lift-iso}, the conclusion follows.
\end{proof}

As mentioned already, in the subsequent section, we shall show that complete multipartite graphs, even cycles, and paths are cross-bipartite swapping in $K_q$, $q\geq 2$, to complete the proof of~\Cref{thm:main2}.
The strategy to prove~\Cref{thm:main} is more or less the same. \Cref{thm:G-vol-ineq-KtxK2} shows that complete graphs are cross-bipartite swapping in any antiferromagnetic graphs $G$, even in a stronger `weighted' sense. It is hence enough to obtain an analogue of~\Cref{thm:BS-H-blow-up}. To state this, we say that a $t$-vertex graph \(H\) is \emph{weighted cross-bipartite swapping in a weighted graph \(G\)} if for all \(\ba,\bb\colon V(G)\to \RR_{\ge0}\),
\[
     V_{H}(\ba,\dots,\ba; G) \cdot V_{H}(\bb,\dots,\bb; G)
     \le V_{H\times K_2}(\underbrace{\ba,\dots,\ba}_t,\underbrace{\bb,\dots,\bb}_t; G).
\]
A variant of~\Cref{thm:BS-H-blow-up} for weighted graphs $G$ is stated as follows.
\begin{theorem}\label{thm:BS-H-blow-up_weighted}
    Let \(G\) be a weighted graph. Let \(F\) be a bipartite graph and \(\HH\) be the class of weighted cross-bipartite swapping graphs in \(G\). Then every \(\HH\)-blow-up of \(F\) is bipartite swapping in \(G\).
\end{theorem}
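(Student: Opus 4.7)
The plan is to mirror the proof of Theorem \ref{thm:BS-H-blow-up}, keeping the overall structure of peeling off vertices of $F$ one at a time via the graphs $\swapped{H}{U}$, but now propagating weighted vertex weights through the induction, in the same spirit as how Theorem \ref{thm:G-vol-ineq-KtxK2} generalised Proposition \ref{prop:G-vol-ineq-KtxK2_idea}.

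Concretely, for each $\ba,\bb\in (\RR_{\ge0})^n$ with $n=v(G)$, I would introduce a natural $G$-volume $V_{\swapped{H}{U}}(\ba;\bb;G)$ of $\swapped{H}{U}$: weight each vertex lying in some $W_i$ (the ``first copy'' of $V_i$) by $\ba$ and each vertex lying in $W'_i$ by $\bb$, and multiply by the usual edge-weight product. For $U=\emptyset$ this factors as $V_H(\ba,\dots,\ba;G)\cdot V_H(\bb,\dots,\bb;G)$, and for $U=V(F)$ it equals $V_{H\times K_2}(\ba,\dots,\ba,\bb,\dots,\bb;G)$ thanks to Lemma \ref{lem:2-lift-iso}, whose proof uses only the bipartiteness of $F$ and is valid in the weighted setting verbatim.

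Next I would prove the weighted analogue of Lemma \ref{lem:2-lifts_hom-comparison}: for $u=v_1\in V(F)\setminus U$,
\[
    V_{\swapped{H}{U}}(\ba;\bb;G)\le V_{\swapped{H}{U\cup\{v_1\}}}(\ba;\bb;G).
\]
The argument fixes a homomorphism $\phi$ of the subgraph $R$ of $\swapped{H}{U}$ induced on $\bigsqcup_{i>1}(W_i\sqcup W'_i)$, which is also an induced subgraph of $\swapped{H}{U\cup\{v_1\}}$. Setting $W\defeq\bigsqcup_{v_j\in N_F(v_1)} W_j$ and $W'\defeq\bigsqcup_{v_j\in N_F(v_1)} W'_j$, the two copies of $H_1$ must be embedded so that each image is joined (with the relevant edge weights) to every vertex of $\phi(W)$ and $\phi(W')$, respectively. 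Encoding these edge-weight products as modified vertex weights
\[
    \alpha_\nu^{(\phi)}\defeq a_\nu\prod_{w\in W} G(\nu,\phi(w)),\qquad \beta_\nu^{(\phi)}\defeq b_\nu\prod_{w'\in W'} G(\nu,\phi(w')),
\]
(exactly as in the proof of Theorem \ref{thm:G-vol-ineq-KtxK2}) the extension on the $U$-side contributes $V_{H_1}(\ba^{(\phi)},\dots;G)\cdot V_{H_1}(\bb^{(\phi)},\dots;G)$, while on the $U\cup\{v_1\}$-side it contributes $V_{H_1\times K_2}(\ba^{(\phi)},\dots,\bb^{(\phi)},\dots;G)$. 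The weighted cross-bipartite swapping property of $H_1\in\HH$ gives the pointwise inequality, and summing over $\phi\in\Hom(R,G)$ (weighted by the $G$-weights of $R$) yields the claim.

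Iterating this monotonicity from $U=\emptyset$ to $U=V(F)$ and invoking Lemma \ref{lem:2-lift-iso} gives $V_H(\ba,\dots,\ba;G)\cdot V_H(\bb,\dots,\bb;G)\le V_{H\times K_2}(\ba,\dots,\ba,\bb,\dots,\bb;G)$; specialising to $\ba=\bb=(1,\dots,1)$ delivers $\hom(H,G)^2\le\hom(H\times K_2,G)$. The only real obstacle is the bookkeeping for the weight modification: one must check that the product of $\ba,\bb$-weights and $G$-edge weights incident to the fixed $\phi(W),\phi(W')$ cleanly factors as the modified $G$-volumes of $H_1$ (resp.\ $H_1\times K_2$) in the weighted vectors $\ba^{(\phi)},\bb^{(\phi)}$. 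Since $H_1$ is arbitrary rather than $K_{t-1}$, this is slightly more general than the calculation in Theorem \ref{thm:G-vol-ineq-KtxK2}, but conceptually identical; no new ingredients beyond the weighted cross-bipartite swapping hypothesis on $\HH$ should be needed.
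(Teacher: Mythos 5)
Your proposal is correct and follows exactly the strategy the paper gestures at (omit the details, mirror the unweighted proof with the weight-modification bookkeeping of \Cref{thm:G-vol-ineq-KtxK2}): peel off a vertex of $F$ at a time via $\swapped{H}{U}$, absorb the $G$-edge weights incident to the fixed homomorphism $\phi$ of $R$ into modified vertex weights $\ba^{(\phi)},\bb^{(\phi)}$, and invoke the weighted cross-bipartite swapping hypothesis on $H_1$. One small imprecision is harmless but worth flagging: the isomorphism $\varphi$ of \Cref{lem:2-lift-iso} swaps $(v,1)\leftrightarrow(v,2)$ on the $B$-side of the bipartition of $F$, so for general $\ba\neq\bb$ one has $V_{\swapped{H}{V(F)}}(\ba;\bb;G)=V_{H\times K_2}(\mathbf c;G)$ with the slots occupied by $\ba$ and $\bb$ permuted according to $\varphi$, \emph{not} literally $V_{H\times K_2}(\ba,\dots,\ba,\bb,\dots,\bb;G)$; since you specialise to $\ba=\bb=\one$ at the very end, this discrepancy vanishes and the conclusion $\hom(H,G)^2\le\hom(H\times K_2,G)$ is unaffected.
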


\Cref{thm:G-vol-ineq-KtxK2} states that all complete graphs are weighted cross-bipartite swapping in the antiferromagnetic graphs, so \Cref{thm:BS-H-blow-up_weighted} implies \Cref{thm:main}.
The proof of~\Cref{thm:BS-H-blow-up_weighted} is verbatim the same as that of \Cref{thm:BS-H-blow-up} except the part that `updates' vertex weights supported in $N(v)$ using edge weights incident to the vertex $v$, as was done in the proof of \Cref{thm:G-vol-ineq-KtxK2}, so we omit it here.

\section{Cross-bipartite swapping graphs}\label{sec:SBS-in-Kq}
For \(A\subseteq V(K_q)\), \(\hom(H,K_q[A])\) depends only on the size of $A$.
In contrast, $\bhom(H\times K_2, K_q[A,B])$ counts the number of list colourings of $H\times K_2$, where the two parts in $H\times K_2$ receive the colour lists $A$ and $B$, respectively, so it may depend on how $A$ and $B$ intersect. 
Our first lemma states that, to prove the cross-bipartite swapping property~\eqref{eq:cross-bipartite-swapping}, we may assume $A\subseteq B$.

\begin{lemma}\label{lem:SBS-in-Kq_simp-condition}
    A graph \(H\) is cross-bipartite swapping in \(K_q\) if and only if for all nonempty \(A,B\subseteq V(K_q)\) with \(A\subseteq B\),
    \begin{equation}\label{eq:SBS-in-Kq}
        \hom(H,K_q[A]) \hom(H,K_q[B]) \le \bhom(H\times K_2, K_q[A,B]).
    \end{equation}
\end{lemma}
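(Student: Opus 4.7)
The forward implication is immediate: cross-bipartite swapping in $K_q$ requires~\eqref{eq:cross-bipartite-swapping} for all $A, B \subseteq V(K_q)$, so in particular~\eqref{eq:SBS-in-Kq} holds whenever $A \subseteq B$.

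For the converse, I would first observe that the left-hand side of~\eqref{eq:cross-bipartite-swapping} depends only on $\abs{A}$ and $\abs{B}$, whereas the right-hand side also sees $\abs{A \cap B}$. The plan is to establish the monotonicity that, with $\abs{A}$ and $\abs{B}$ fixed, the count $\bhom(H \times K_2, K_q[A, B])$ can only decrease as $\abs{A \cap B}$ grows. Combined with the symmetry $\bhom(H \times K_2, K_q[A, B]) = \bhom(H \times K_2, K_q[B, A])$ coming from the involution exchanging the two copies of $K_2$, this shows that the right-hand side is smallest precisely when one of $A, B$ is contained in the other, reducing the general inequality to the assumed case.

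The key step is a swapping construction. Given $x \in A \setminus B$ and $y \in B \setminus A$, I would let $\sigma$ be the transposition of $V(K_q)$ exchanging $x$ and $y$, and set $A^* \defeq A$ and $B^* \defeq \sigma(B) = (B \setminus \{y\}) \cup \{x\}$. Then $\abs{A^*} = \abs{A}$, $\abs{B^*} = \abs{B}$, and $\abs{A^* \cap B^*} = \abs{A \cap B} + 1$. I would show that $(\phi_1, \phi_2) \mapsto (\phi_1, \sigma \circ \phi_2)$ is an injection from $\bHom(H \times K_2, K_q[A^*, B^*])$ into $\bHom(H \times K_2, K_q[A, B])$. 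Iterating this swap reduces $\abs{A \setminus B}$ by one each time, and after $\min(\abs{A \setminus B}, \abs{B \setminus A})$ iterations we arrive at a pair with one set contained in the other, at which point the hypothesis~\eqref{eq:SBS-in-Kq} together with the above symmetry finishes the proof.

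The only thing that actually requires care is the edge condition for this injection: for each $uv \in E(H)$, the constraint $\phi_1(u) \ne \phi_2(v)$ has to imply $\phi_1(u) \ne \sigma(\phi_2(v))$. The only nontrivial case is $\phi_2(v) = x$, where $\sigma(\phi_2(v)) = y$ and one leans on $\phi_1(u) \in A$ together with $y \notin A$; the case $\phi_2(v) = y$ does not arise since $y \notin B^*$. Beyond this brief case analysis and the corresponding check for the edge $(v,1)(u,2)$, I do not anticipate any real obstacle.
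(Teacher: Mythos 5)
Your argument is correct and is essentially the same as the paper's: both reduce to the nested case by permuting colours on one side of the bipartition and verifying propriety via disjointness of the moved colour from the other list. The only cosmetic difference is that the paper performs the recolouring in a single step (fixing a bijection $A'\setminus A \to A\setminus A'$ and applying it all at once to the $V_1$-side colouring) while you iterate one transposition at a time on the $V_2$-side.
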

\begin{proof}
    It is enough to show the `if' part, so suppose \eqref{eq:SBS-in-Kq} holds whenever \(A\subseteq B\). 
    We may assume $\abs{A}\leq \abs{B}$ without loss of generality while $A\setminus B$ is nonempty.  
    Let $A'\subseteq B$ be a subset of size $\abs{A}$ containing $A\cap B$ and let \(f\colon A'\setminus A\to A\setminus A'\) be a bijection.

    With the canonical bipartition \(V_1\sqcup V_2\) of \(H\times K_2\), 
    we claim that $\bhom(H\times K_2,K_q[A',B])$ is at most $\bhom(H\times K_2,K_q[A,B])$. If this is true, then
    \begin{align}\label{eq:N(H;a,b)_reduction}
        \hom(H, K_q[A]) \cdot \hom(H,K_q[B])
        &= \hom(H, K_q[A']) \cdot \hom(H,K_q[B]) \nonumber
        \\&\le \bhom(H\times K_2, K_q[A', B]) \nonumber
        \\&\le \bhom(H\times K_2, K_q[A,B]),
    \end{align}
    which completes the proof.
    
    To prove the claim, the idea is to recolour the vertices in \(H\times K_2\) coloured by \(A'\) using the corresponding colours in \(A\) along \(f\).
    Let $\phi$ be the list $q$-colouring of $H$ that colours $V_1$ and $V_2$ with colours in $A'$ and $B$, respectively. Then one can recolour $\phi$ by replacing each use of a colour $a\in A'\setminus A$ by $f(a)\in A\setminus A'$. As $A\setminus A'$ is disjoint from $B$, the new colouring is also a proper $q$-colouring and furthermore, every $\phi$ maps to a unique colouring in $\bHom(H\times K_2,K_q[A,B])$.
\end{proof}

As $\hom(H,K_q[A])=\hom(H,K_{\abs{A}})$, which only depends on $a\defeq\abs{A}$. To stress this fact, we simply write
\[
    N(H;a) \defeq \hom(H,K_a).
\]
By~\Cref{lem:SBS-in-Kq_simp-condition}, 
when counting $\bhom(H',K_q[A,B])$ for a bipartite graph \(H'=H\times K_2\) with a fixed bipartition \(V_1\sqcup V_2\), we may assume $A\subseteq B$ or $B\subseteq A$.
If $A\subseteq B$, then $\bhom(H',K_q[A,B])$ also depends only on $a\defeq\abs{A}$ and $b\defeq\abs{B}$, $a\leq b$, so we write 
\[
    N(H'; a,b) \defeq \bhom(H', K_b[A,B])
\]
for simplicity. 
Symmetrically, we also write $N(H';b,a)\defeq\bhom(H',K_b[B,A])$ for $a\leq b$. If $H'$ admits an automorphism that swaps $V_1$ and $V_2$, then $N(H';b,a)=N(H';a,b)$. We also extend this definition to arbitrary bipartite graphs $H'$ that are not necessarily of the form $H\times K_2$, while specifying the bipartition of $H'$ to avoid ambiguity.

Throughout this section, \(a\) and \(b\) are always positive integers.
By \Cref{lem:SBS-in-Kq_simp-condition}, \(H\) is cross-bipartite swapping in  $K_q$ if and only if for \(1\le a\le b\le q\),
\begin{equation}\label{eq:SBS-goal}
    N(H;a) \, N(H;b)\le N(H\times K_2; a,b).
\end{equation}
When showing \eqref{eq:SBS-goal} for various graphs \(H\), the following elementary observation will be useful.

\begin{proposition}\label{prop:sum-prod-red} Let $(a_n)$, $(b_n)$, $(c_n)$, and $(d_n)$ be real sequences.
    \begin{enumerate}
        \item\label{case:(prop:sum-prod-red)-1} If \(a_k b_\ell \le c_k d_\ell\) for all \(0\le k\le m_1\) and \(0\le \ell\le m_2\), then
        \[
            \sum_{k=0}^{m_1} a_k \sum_{\ell=0}^{m_2} b_\ell \le \sum_{k=0}^{m_1} c_k \sum_{\ell=0}^{m_2} d_\ell.
        \]
        \item\label{case:(prop:sum-prod-red)-2} If \(a_k b_\ell + a_\ell b_k \le c_k d_\ell + c_\ell d_k\) for all \(0\le k\le \ell\le m\), then
        \[
            \sum_{k=0}^{m} a_k \sum_{\ell=0}^{m} b_\ell \le \sum_{k=0}^{m} c_k \sum_{\ell=0}^{m} d_\ell.
        \]
    \end{enumerate}
\end{proposition}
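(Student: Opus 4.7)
The plan is to reduce both inequalities to straightforward double-sum rearrangements. For part \ref{case:(prop:sum-prod-red)-1}, I would expand the product of the two sums on the left as a single double sum and apply the hypothesis termwise. Concretely,
\[
    \sum_{k=0}^{m_1} a_k \sum_{\ell=0}^{m_2} b_\ell = \sum_{k=0}^{m_1}\sum_{\ell=0}^{m_2} a_k b_\ell \le \sum_{k=0}^{m_1}\sum_{\ell=0}^{m_2} c_k d_\ell = \sum_{k=0}^{m_1} c_k \sum_{\ell=0}^{m_2} d_\ell,
\]
and there is nothing more to do.

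For part \ref{case:(prop:sum-prod-red)-2}, the difficulty (such as it is) is that the hypothesis is only symmetric in the index pair, not termwise. I would therefore split the product $\sum_k a_k \sum_\ell b_\ell = \sum_{0\le k,\ell \le m} a_k b_\ell$ according to whether $k<\ell$, $k=\ell$, or $k>\ell$, and then pair up the two off-diagonal regions by relabelling:
\[
    \sum_{k=0}^{m} a_k \sum_{\ell=0}^{m} b_\ell = \sum_{k=0}^{m} a_k b_k + \sum_{0\le k<\ell\le m} \bigl(a_k b_\ell + a_\ell b_k\bigr).
\]
The same decomposition applies to the product $\sum_k c_k \sum_\ell d_\ell$. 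The hypothesis with $k=\ell$ reads $2 a_k b_k \le 2 c_k d_k$, which handles the diagonal contribution, while the hypothesis with $k<\ell$ handles the off-diagonal pairs. Summing these termwise inequalities yields the desired bound.

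I do not expect any real obstacle; both parts are bookkeeping. The only point worth flagging is that the asymmetric pair $(k,\ell)$ with $k\neq \ell$ must be grouped into the symmetric sum $a_k b_\ell + a_\ell b_k$ before the hypothesis is applied, which is why the statement of part \ref{case:(prop:sum-prod-red)-2} restricts to $k\le \ell$. The lemma is elementary; its role is to serve as a uniform tool in the subsequent verifications of~\eqref{eq:SBS-goal} for paths, even cycles, and complete multipartite graphs.
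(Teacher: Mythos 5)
Your proof is correct, and it is exactly the elementary double-sum decomposition the authors have in mind; the paper states the proposition as an ``elementary observation'' without proof, so your argument supplies the intended justification. Nothing to fix.
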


\subsection{Paths and even cycles}
In this subsection, we show the paths and the even cycles are cross-bipartite swapping in $K_q$.
These graphs $H$ are bipartite, so $H\sqcup H\cong H\times K_2$, which reduces \eqref{eq:SBS-goal} to
\begin{equation}\label{eq:SBS-bip-goal}
    N(H;a) \, N(H;b) \le N(H; a,b) \, N(H; b,a).
\end{equation}
Note that if $a=b$, then \eqref{eq:SBS-bip-goal} is an equality, and if \(a=1\), then the LHS vanishes unless \(H\) has no edges. Thus, throughout this subsection, we show \eqref{eq:SBS-bip-goal} for $2\le a < b$. We shall use \(d\) for a positive integer to parametrise lengths of paths and cycles.

For the paths and the even cycles, we count $N(H;a,b)$ and $N(H;b,a)$ by fixing the homomorphic embedding to the $a$-colour side first and then determining the number of choices left on the $b$-colour side. This is done by the following lemma.

\begin{lemma}\label{lem:bip-cross-col}
    Let \(H\) be a bipartite graph on the bipartition \(V_1\sqcup V_2\), where every vertex in \(V_2\) has degree either \(1\) or \(2\). Let \(2\le a\le b\). Let \(M(\ell)\) be the number of the colourings \(\phi\colon V_1\to [a]\) such that there are exactly \(\ell\) vertices \(v\) in \(V_2\) such that \(\abs{\phi(N(v))}=2\). Then
    \[
        N(H;a,b) = \sum_{\ell=0}^{\abs{V_2}}T(a,b,\ell),
    \]
    where $T(a,b,\ell) =  M(\ell) (b-1)^{\abs{V_2}-\ell} (b-2)^\ell$.
\end{lemma}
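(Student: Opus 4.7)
The plan is to count $N(H;a,b) = \bhom(H, K_b[A,B])$ with $|A|=a$, $|B|=b$, and $A\subseteq B$ by a direct enumeration, extending colourings of $V_1$ to $V_2$ one vertex at a time. Since $H$ is bipartite, $V_1$ is independent, so any map $\phi\colon V_1 \to A = [a]$ is automatically a valid list colouring on the $V_1$-side. It then remains to count the number of extensions $\psi\colon V_2 \to B = [b]$ that make $\phi\cup\psi$ a homomorphism.

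Next, I would observe that the extensions factor over $V_2$: because $V_2$ is independent in $H$, the colour assigned to each $v \in V_2$ is constrained only by $\phi$ restricted to $N(v)\subseteq V_1$, and these constraints are independent across different $v\in V_2$. The number of admissible colours from $[b]$ at a vertex $v$ is exactly $b - \abs{\phi(N(v))}$, since $v$ must avoid the colours already used by its neighbours (and these colours lie in $[a]\subseteq [b]$). The degree hypothesis on $V_2$ gives $\abs{\phi(N(v))}\in\{1,2\}$: it equals $1$ precisely when $v$ has degree $1$ or when $v$ has degree $2$ but its two neighbours receive the same colour under $\phi$, and it equals $2$ precisely when $v$ has degree $2$ and its two neighbours receive distinct colours. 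Consequently, if $\ell$ denotes the number of $v\in V_2$ with $\abs{\phi(N(v))}=2$, the number of extensions of $\phi$ is exactly $(b-2)^{\ell}(b-1)^{\abs{V_2}-\ell}$.

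Finally, I would group the sum
\[
    N(H;a,b) \;=\; \sum_{\phi\colon V_1 \to [a]} \; \prod_{v\in V_2} \bigl(b - \abs{\phi(N(v))}\bigr)
\]
by the value of $\ell=\ell(\phi)$. Since $M(\ell)$ is, by definition, the number of colourings $\phi$ achieving a given value of $\ell$, this grouping yields
\[
    N(H;a,b) \;=\; \sum_{\ell=0}^{\abs{V_2}} M(\ell)\,(b-1)^{\abs{V_2}-\ell}\,(b-2)^{\ell} \;=\; \sum_{\ell=0}^{\abs{V_2}} T(a,b,\ell),
\]
as claimed. This is a routine bookkeeping argument once the factorisation is set up; there is no real obstacle, and the degree-$\{1,2\}$ hypothesis is used precisely to guarantee that only the two factors $(b-1)$ and $(b-2)$ appear.
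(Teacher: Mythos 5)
Your proof is correct and takes essentially the same approach as the paper: fix a colouring $\phi\colon V_1\to[a]$, observe that the extensions to $V_2$ factor over the (independent) vertices of $V_2$ with $b-\abs{\phi(N(v))}$ choices each, and group by the number $\ell$ of vertices with $\abs{\phi(N(v))}=2$. Your write-up is somewhat more explicit than the paper's terse version (e.g.\ naming the extension $\psi$ separately, and noting why $\phi(N(v))\subseteq[a]\subseteq[b]$ is what makes the count $b-\abs{\phi(N(v))}$ correct under $A\subseteq B$), but the underlying argument is the same.
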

\begin{proof}
    It is enough to count the possible choices for \(\phi(v)\), \(v\in V_2\), amongst the \(b\) colours avoiding \(\phi(N(v))\). If \(\abs{\phi(N(v))}=1\), there are \(b-1\) choices; otherwise if \(\abs{\phi(N(v))}=2\), \(b-2\) choices exist. Hence, the number of the colourings in this case is \(M(\ell) (b-1)^{\abs{V_2}-\ell} (b-2)^\ell\); summing this up for distinct values of $\ell$ concludes the proof.
\end{proof}

\subsubsection*{Odd–length paths}
\begin{figure}[!htb]
    \centering
    \includegraphics{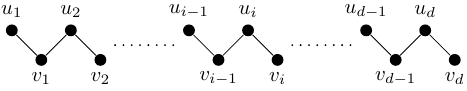}
    \caption{The path \(P\) of odd length \(2d-1\).}
    \label{fig:odd-length-path}
\end{figure}

Let \(P=u_1v_1u_2v_2\cdots u_{d}v_{d}\) be a path of length \(2d-1\) with the bipartition  
\(\{u_{i} : 1\le i\le d\} \sqcup \{v_{j} : 1\le j\le d\}\) (see \Cref{fig:odd-length-path}) and let $2\leq a\le b$.
For a vertex-colouring \(\phi\) counted by \(N(P; a,b)\), let
\[
    L(\phi) \defeq \{2\le i\le d : \phi(u_{i}) \neq \phi(u_{i-1})\}.
\]
This allows us to count those $\phi$ with $\abs{L(\phi)}=\ell$, i.e., $T(a,b,\ell)$ in~\Cref{lem:bip-cross-col}. There are \(\binom{d-1}{\ell}\) ways to choose $L(\phi)$ as an \(\ell\)-subset of \([2,d]\).
After choosing $\phi(u_1)$ amongst $a$ colours, if $i\geq 2$ satisfies $i\notin L(\phi)$, then $\phi(u_i)$ is determined by $\phi(u_{i-1})$. Otherwise, if $i\in L(\phi)$, there are $a-1$ colour choices for $\phi(u_i)$ avoiding $\phi(u_{i-1})$.
Hence, $M(\ell)$ in \Cref{lem:bip-cross-col} is $\binom{d-1}{\ell} a(a-1)^{\ell}$, so
\[
    T(a,b,\ell)=\binom{d-1}{\ell} a(a-1)^{\ell} (b-1)^{d-\ell} (b-2)^\ell
\]
and therefore,
\[
    N(P;a,b)
    =\sum_{\ell=0}^{d-1} T(a,b,\ell)
    =\sum_{\ell=0}^{d-1}
        \binom{d-1}{\ell}
            a(a-1)^{\ell}
            (b-1)^{d-\ell}
            (b-2)^{\ell}.
\]

Our goal~\eqref{eq:SBS-bip-goal} is to show
\[
    N(P; a) \, N(P; b) \le N(P; a,b) \, N(P; b,a) = N(P; a,b)^2
    \qquad \text{for \(2\le a < b\)},
\]
where $N(P;b,a)=N(P;a,b)$ follows from the symmetry of $P$ swapping the two parts of the bipartition.
\Cref{prop:sum-prod-red}~\ref{case:(prop:sum-prod-red)-1} then reduces the problem to show that for all \(0\le k,\ell\le d-1\),
\[
    T(a,a,k) \, T(b,b,\ell) \le T(a,b,k) \, T(a,b,\ell).
\]
That is,
\(
    (a-1)^{d-k-\ell} (a-2)^k b \le  (b-1)^{d-k-\ell} (b-2)^k a
\),
which follows from
\begin{equation}\label{eq:path-SBS-in-Kq_simp}
    \frac{\text{LHS}}{\text{RHS}}
    =\biggl(\frac{a-1}{b-1}\biggr)^{d-k-\ell} \biggl(\frac{a-2}{b-2}\biggr)^k \frac{b}{a}
    \le \biggl(\frac{a-1}{b-1}\biggr)^{d-\ell} \frac{b}{a}
    \le 1.
\end{equation}

\subsubsection*{Even-length paths}
\begin{figure}[!htb]
    \centering
    \includegraphics{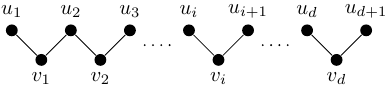}
    \caption{The path \(P\) of even length \(2d\).}
    \label{fig:even-length-path}
\end{figure}

Let \(P=u_1v_1u_2v_2\cdots u_d v_d u_{d+1}\) be the path of length \(2d\) with the bipartition  
\(\{u_i : 1\le i\le d+1\} \sqcup \{v_i : 1\le i\le d\}\) (see \Cref{fig:even-length-path}) and let \(2\le a \le b\). The two parts are no longer symmetric, which makes $N(P;a,b)$ and $N(P;b,a)$ not identical in general and requires some extra lines of calculation.
Again, for a vertex-colouring \(\phi\) counted by \(N(P;a,b)\), let
\[
    L(\phi) \defeq \{2\le i\le d+1 : \phi(u_i)\neq \phi(u_{i-1})\}.
\]
Then the number $T_1(a,b,\ell)$ of those \(\phi\) with \(\abs{L(\phi)}=\ell\) is equal to
\[
    T_1(a,b,\ell)
    =
    \binom{d}{\ell} a(a-1)^\ell (b-1)^{d-\ell} (b-2)^\ell.
\]
We use $T_1(a,b,\ell)$ instead of $T(a,b,\ell)$ to distinguish it from the terms that arise from $N(P;b,a)$ later.
Indeed, there are \(\binom{d}{\ell}\) ways to choose \(L(\phi)\).
After choosing $\phi(u_1)$ amongst $a$ colours, if $i\geq 2$ satisfies $i\notin L(\phi)$, then $\phi(u_i)$ is determined by $\phi(u_{i-1})$. Otherwise, if $i\in L(\phi)$, there are $a-1$ colour choices for $\phi(u_i)$ avoiding $\phi(u_{i-1})$.
Thus, $M(\ell)=\binom{d}{\ell} a(a-1)^{\ell}$ in \Cref{lem:bip-cross-col}, so
\begin{equation}\label{eq:num-colorings-even-len-path_1}
    N(P; a,b)
    = \sum_{\ell=0}^d T_1(a,b,\ell)
    = \sum_{\ell=0}^d \binom{d}{\ell} a(a-1)^\ell (b-1)^{d-\ell} (b-2)^\ell.
\end{equation}

For a vertex-colouring \(\psi\) counted by \(N(P;b,a)\), also write $L(\psi) \defeq \{2\le i\le d : \psi(v_{i})\neq \psi(v_{i-1})\}$.
This is to $a$-colour $V_2$ so that we can apply~\Cref{lem:bip-cross-col} with bipartition $V_2\sqcup V_1$.
Then the number $T_2(a,b;\ell)$ of those \(\psi\) with \(\abs{L(\psi)}=\ell\) is equal to
\[
    T_2(a,b,\ell) \defeq \binom{d-1}{\ell} a(a-1)^\ell (b-1)^{d-\ell+1} (b-2)^\ell,
\]
since $M(\ell)$ in \Cref{lem:bip-cross-col} is $\binom{d-1}{\ell} a(a-1)^{\ell}$. Indeed, \(\binom{d-1}{\ell}\) counts choices for $L(\psi)$, there are $a$ colour choices for $\psi(v_1)$ and $a-1$ colour choices for $\psi(v_i)$ for $i\in L(\psi)$ avoiding $\psi(u_{i-1})$.
Hence,
\begin{equation}\label{eq:num-colorings-even-len-path_2}
    N(P; b,a)
    = \sum_{\ell=0}^{d-1} T_2(a,b,\ell)
    = \sum_{\ell=0}^{d-1} \binom{d-1}{\ell} a(a-1)^\ell (b-1)^{d-\ell+1} (b-2)^\ell.
\end{equation}

Our goal~\eqref{eq:SBS-bip-goal} is to show
\[
    N(P; a) \, N(P; b) \le N(P; a,b) \, N(P; b,a)
    \qquad\text{for \(2\le a<b\)},
\]
which is equivalent to
\[
   \sum_{\ell=0}^{d} T_1(a,a,\ell)\sum_{\ell=0}^{d-1}T_2(b,b,\ell)
   \leq \sum_{\ell=0}^{d} T_1(a,b,\ell)\sum_{\ell=0}^{d-1}T_2(a,b,\ell)
\]
by \eqref{eq:num-colorings-even-len-path_1} and \eqref{eq:num-colorings-even-len-path_2}. \Cref{prop:sum-prod-red}~\ref{case:(prop:sum-prod-red)-1} then reduces the problem to show that for all \(0\le k\le d\) and \(0\le \ell\le d-1\),
\[
    T_1(a,a,k) \, T_2(b,b,\ell) \le T_1(a,b,k) \, T_2(a,b,\ell).
\]
This is simplified to
\(
    (a-1)^{d-k-\ell} (a-2)^k b \le (b-1)^{d-k-\ell} (b-2)^k a
\),
which follows from $\frac{a-2}{b-2}\leq \frac{a-1}{b-1}\leq \frac{a}{b}$, analogously to \eqref{eq:path-SBS-in-Kq_simp}.

\subsubsection*{Even cycles}
Let \(C\) be a cycle of length \(2d\), \(d\ge2\), with the bipartition \(\{u_i : 1\le i\le d\}\sqcup \{v_i : 1\le i\le d\}\). 
In this graph, we shall take the vertex index sum modulo $d$ and we assume that $u_iv_i$ and $v_iu_{i+1}$ are edges. Let \(2\le a \le b\). For a vertex-colouring \(\phi\) counted by \(N(C;a,b)\), define
\[
    L(\phi) \defeq \{1\le i\le d : \phi(u_{i}) \neq \phi(u_{i-1})\}.
\]
We again count those \(\phi\) with \(\abs{L(\phi)}=\ell\), denoted by $T(a,b,\ell)$, to use~\Cref{lem:bip-cross-col}.
There are \(\binom{d}{\ell}\) choices to choose $L(\phi)$.
Having fixed $L(\phi)$, consider the auxiliary $\ell$-cycle obtained by contracting the edge $u_iu_{i-1}$, \(i\in [d]\setminus L(\phi)\), of the $d$-cycle on $\{u_i:1\le i\le d\}$ with edges $\{u_iu_{i+1}:1\leq i\le d\}$.
Then determining \(\phi(u_i)\), \(1\le i\le d\), is equivalent to properly colouring an \(\ell\)-cycle with \(a\) colours. 
The number of proper $a$-colourings of an $\ell$-cycle is \((a-1)^\ell + (-1)^\ell(a-1)\), which follows from the deletion-contraction formula for chromatic polynomials. Thus, $M(\ell)=\binom{d}{\ell} \big( (a-1)^\ell + (-1)^\ell(a-1) \big)$, so~\Cref{lem:bip-cross-col} gives
\begin{align}\label{eq:T_in_cycles}
    T(a,b,\ell)
    =
    \binom{d}{\ell} \big( (a-1)^\ell + (-1)^\ell(a-1) \big) (b-1)^{d-\ell} (b-2)^\ell
\end{align}
and
\begin{equation}\label{eq:num-colorings-even-cycle}
    N(C;a,b)
    = \sum_{\ell=0}^d T(a,b,\ell)
    = \sum_{\ell=0}^d \binom{d}{\ell} \big( (a-1)^\ell + (-1)^\ell(a-1) \big) (b-1)^{d-\ell} (b-2)^\ell.
\end{equation}

Note that $N(C;a,b)=N(C;b,a)$ due to the part-swapping automorphism of the even cycle $C$. Our goal~\eqref{eq:SBS-bip-goal} is to show
\begin{equation}\label{eq:even-cycle-SBS-in-Kq_goal}
    N(C;a) \, N(C;b)\le N(C;a,b)\, N(C;b,a) = N(C;a,b)^2
    \qquad\text{for \(2\le a < b\)}.
\end{equation}
When \(a=2\), this follows from
\[
    N(C;2) \, N(C;b)
    = 2 \big( (b-1)^d + (-1)^d (b-1) \big)
    \le \big( 2(b-1)^d \big)^2
    \le N(C;2,b)^2.
\]
Now suppose \(a\ge3\). We collect three claims to apply~\Cref{prop:sum-prod-red}.

\begin{claim}\label{clm:even-cycle-SBS-in-Kq_red_case1-2}
    For \(0\le k,\ell\le d\), if $\ell$ is even or \((k,\ell)\neq (0,d)\), then
    \begin{equation}\label{eq:even-cycle-SBS-in-Kq_red_case1}
        T(a,a,k) \, T(b,b,\ell) \le T(a,b,k) \, T(a,b,\ell).
    \end{equation}
\end{claim}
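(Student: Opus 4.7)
The plan is to cancel the common factors in the ratio
\[
    \frac{T(a,a,k)\,T(b,b,\ell)}{T(a,b,k)\,T(a,b,\ell)}.
\]
Using~\eqref{eq:T_in_cycles}, the binomial coefficients, the factor $(a-1)^k+(-1)^k(a-1)$, and the terms $(b-1)^{d-\ell}(b-2)^\ell$ all cancel, reducing the claim to
\[
    (a-1)^{d-k}(a-2)^k\,f(b,\ell) \le (b-1)^{d-k}(b-2)^k\,f(a,\ell),
\]
where $f(x,\ell) \defeq (x-1)^\ell + (-1)^\ell(x-1)$ (with $f(x,0)=2$ by convention). The cases $\ell\in\{0,1\}$ are immediate, so the remaining analysis splits by the parity of $\ell\ge2$.

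For even $\ell$, I would use the factorisation $f(x,\ell) = (x-1)\bigl[(x-1)^{\ell-1}+1\bigr]$. A direct cross-multiplication gives $f(a,\ell)/f(b,\ell) \ge \bigl(\frac{a-1}{b-1}\bigr)^\ell$, which reduces the claim to
\[
    \biggl(\frac{a-1}{b-1}\biggr)^{d-k-\ell}\biggl(\frac{a-2}{b-2}\biggr)^k \le 1.
\]
If $d\ge k+\ell$, both factors lie in $(0,1]$; otherwise, rewriting with positive exponents and applying $\frac{b-1}{a-1}\le\frac{b-2}{a-2}$ (equivalent to $a\le b$) together with $\ell\le d$ suffices.

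For odd $\ell\ge3$, I would factorise $f(x,\ell) = (x-1)(x-2)\sum_{i=0}^{\ell-2}(x-1)^i$, which supplies an extra $(x-2)$ on each side, and use the bound $\tfrac{\sum_{i=0}^{\ell-2}(a-1)^i}{\sum_{i=0}^{\ell-2}(b-1)^i}\ge\bigl(\frac{a-1}{b-1}\bigr)^{\ell-2}$ (obtained by a weighted-average argument, since $(\tfrac{a-1}{b-1})^i$ is minimised at $i=\ell-2$) as a substitute for the bound on $f(a,\ell)/f(b,\ell)$ above. For $k\ge1$, dividing both sides by $(a-1)(b-1)(a-2)(b-2)$ and applying this bound reduces the claim to $\bigl(\frac{a-1}{b-1}\bigr)^{d-k-\ell+1}\bigl(\frac{a-2}{b-2}\bigr)^{k-1}\le1$, handled exactly as in the even case.

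The main obstacle will be the sub-case $k=0$, which must be treated directly since $(a-2)^{k-1}$ would be singular there. It boils down to showing $\bigl(\frac{a-1}{b-1}\bigr)^{d-\ell+1}\le\frac{a-2}{b-2}$. When $d=\ell$, this becomes $\frac{a-1}{b-1}\le\frac{a-2}{b-2}$, which is \emph{false} for $a<b$; this pinpoints precisely the excluded case $(k,\ell)=(0,d)$. For $\ell<d$, one has $d-\ell+1\ge2$, so it suffices to verify $\bigl(\frac{a-1}{b-1}\bigr)^{2}\le\frac{a-2}{b-2}$; a direct expansion shows this is equivalent to $(a-2)(b-2)\ge1$, which holds since $a\ge3$ from the preceding line of the proof of~\eqref{eq:even-cycle-SBS-in-Kq_goal}.
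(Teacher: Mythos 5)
Your proof is correct but takes a genuinely different route from the paper's. After cancelling to the reduced inequality $(a-1)^{d-k}(a-2)^k f(b,\ell) \le (b-1)^{d-k}(b-2)^k f(a,\ell)$, where $f(x,\ell)\defeq (x-1)^\ell+(-1)^\ell(x-1)$, the paper proceeds by repeatedly absorbing the factor $\bigl(\tfrac{a-2}{b-2}\bigr)^k$ via $\tfrac{a-2}{b-2}\le\tfrac{a-1}{b-1}$ and then invoking monotonicity of the rational functions $(x^\ell-x)/x^{\ell+1}$ and $(x^d-x)/(x^{d-1}(x-1))$. You instead factorise $f(x,\ell)$ by parity --- $(x-1)\bigl[(x-1)^{\ell-1}+1\bigr]$ for even $\ell$ and $(x-1)(x-2)\sum_{i=0}^{\ell-2}(x-1)^i$ for odd $\ell$ --- and bound $f(a,\ell)/f(b,\ell)$ by a geometric-series weighted-average argument, ultimately reducing the delicate $k=0$ boundary case to the clean polynomial inequality $(a-2)(b-2)\ge 1$. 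Both work; yours is arguably more self-contained since the paper's two monotonicity assertions are stated without proof, at the cost of a slightly finer case split. Your analysis of where the argument fails (precisely at $(k,\ell)=(0,d)$ with $d$ odd) correctly explains why the claim is stated with that exclusion. Two small nits: $f(x,0)=x$, not $2$, though this is harmless since you dispatch $\ell=0$ immediately; and when cancelling the common factor $f(a,k)$ you should note that for $k=1$ it vanishes, making both sides of the original inequality zero.
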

\begin{claimproof}
    By~\eqref{eq:T_in_cycles},
    \begin{equation}\label{eq:even-cycle-SBS-in-Kq_red_case1-2-eq1}
        \frac{\text{LHS of \eqref{eq:even-cycle-SBS-in-Kq_red_case1}}}{\text{RHS of \eqref{eq:even-cycle-SBS-in-Kq_red_case1}}}
        = \biggl(\frac{a-1}{b-1}\biggr)^{d-k} \biggl(\frac{a-2}{b-2}\biggr)^k
            \frac{(b-1)^\ell + (-1)^\ell (b-1)}{(a-1)^\ell + (-1)^\ell (a-1)}.
    \end{equation}
    Suppose first that \(\ell\) is even. Then by \(\frac{a-2}{b-2} \le \frac{a-1}{b-1}\), it suffices to show that
    \begin{equation}\label{eq:even-cycle-SBS-in-Kq_red_case1-2-eq2}
        \biggl(\frac{a-1}{b-1}\biggr)^{d}
            \frac{(b-1)^\ell + (b-1)}{(a-1)^\ell + (a-1)}\leq 1.
    \end{equation}
    If \(\ell=0\), then
    \(
        \eqref{eq:even-cycle-SBS-in-Kq_red_case1-2-eq2}
        \le \bigl(\frac{a-1}{b-1}\bigr)^{d} \frac{b}{a}
        \le \frac{a-1}{b-1} \frac{b}{a}
        \le 1
    \); if \(\ell>0\) is even, then
    \(
        \eqref{eq:even-cycle-SBS-in-Kq_red_case1-2-eq2}
        \le \bigl(\frac{a-1}{b-1}\bigr)^{d} \frac{(b-1)^\ell}{(a-1)^\ell}
        \le 1
    \).
    Second, suppose \(\ell\) is odd. If \(\ell<d\), then
    \[
        \eqref{eq:even-cycle-SBS-in-Kq_red_case1-2-eq1}
        \le \biggl(\frac{a-1}{b-1}\biggr)^{d}
                \frac{(b-1)^\ell - (b-1)}{(a-1)^\ell - (a-1)}
        \le \biggl(\frac{a-1}{b-1}\biggr)^{d} \frac{(b-1)^{\ell+1}}{(a-1)^{\ell+1}}
        \le 1,
    \]
    where the first inequality uses \(\frac{a-2}{b-2} \le \frac{a-1}{b-1}\) and the second inequality follows from the fact that \((x^\ell-x)/x^{\ell+1}\) is decreasing for \(x\ge 2\).
    Otherwise, if \(\ell=d\), then \(k>0\) by the assumption. In this case,
    \[
        \eqref{eq:even-cycle-SBS-in-Kq_red_case1-2-eq1}
        \le \biggl(\frac{a-1}{b-1}\biggr)^{d-1} \biggl(\frac{a-2}{b-2}\biggr)
                \frac{(b-1)^d - (b-1)}{(a-1)^d - (a-1)}
        \le 1,
    \]
    where the first inequality uses \(\frac{a-2}{b-2} \le \frac{a-1}{b-1}\) and the second inequality follows from the fact that \((x^d-x)/(x^{d-1}(x-1))\) is decreasing for \(x\ge2\).
\end{claimproof}

\begin{claim}\label{clm:even-cycle-SBS-in-Kq_red_case1}
    For \(0\le k,\ell\le d\), if \(d\) is even, then
    \[
        T(a,a,k) \, T(b,b,\ell) \le T(a,b,k) \, T(a,b,\ell).
    \]
\end{claim}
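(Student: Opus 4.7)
The plan is to derive this claim as a direct corollary of the preceding Claim~\ref{clm:even-cycle-SBS-in-Kq_red_case1-2}. That claim establishes the pointwise inequality $T(a,a,k)\,T(b,b,\ell) \le T(a,b,k)\,T(a,b,\ell)$ for every index pair $(k,\ell)\in\{0,1,\dots,d\}^2$ satisfying \emph{either} ``$\ell$ is even'' \emph{or} ``$(k,\ell)\neq(0,d)$''. The only index pair it leaves uncovered is therefore one with $\ell$ odd and simultaneously $(k,\ell)=(0,d)$, which forces $\ell=d$ to be odd.

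Under the hypothesis of the present claim that $d$ is even, this remaining case cannot arise. Indeed, if $\ell=d$ then $\ell$ is even, while if $\ell<d$ then $(k,\ell)\neq(0,d)$; in either situation the hypothesis of Claim~\ref{clm:even-cycle-SBS-in-Kq_red_case1-2} is met. Hence the desired inequality holds for every $(k,\ell)\in\{0,1,\dots,d\}^2$ with no additional calculation, and the proof is complete in one line.

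There is essentially no obstacle here: the hard analytical work --- controlling the parity-dependent sign $(-1)^\ell$ in the factor $(a-1)^\ell+(-1)^\ell(a-1)$, juggling the ratios $\tfrac{a-1}{b-1}$ versus $\tfrac{a-2}{b-2}$, and exploiting the monotonicity of $(x^\ell-x)/x^{\ell+1}$ and $(x^d-x)/(x^{d-1}(x-1))$ --- was already carried out in the proof of Claim~\ref{clm:even-cycle-SBS-in-Kq_red_case1-2}. The assumption that $d$ is even in the present statement is exactly what rules out the single exceptional index pair left uncovered by that analysis, so the claim reduces to an immediate appeal to its predecessor.
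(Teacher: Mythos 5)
Your proof is correct and follows essentially the same reasoning as the paper's: both show that when $d$ is even, the one index pair $(k,\ell)=(0,d)$ with $\ell$ odd excluded by Claim~\ref{clm:even-cycle-SBS-in-Kq_red_case1-2} cannot occur, so that claim covers all $(k,\ell)$. The only difference is cosmetic --- you locate the excluded case first and then rule it out, whereas the paper splits on the parity of $\ell$ --- but the logic is identical.
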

\begin{claimproof}
    If \(\ell\) is even, this follows from \Cref{clm:even-cycle-SBS-in-Kq_red_case1-2}; if \(\ell\) is odd, then \(\ell<d\) since \(d\) is even, so again the conclusion follows from \Cref{clm:even-cycle-SBS-in-Kq_red_case1-2}.
\end{claimproof}
It then remains to analyse the `boundary case' $(k,\ell)=(0,d)$ for odd $d$. This particular case must be handled by pairing it with $(k,\ell)=(d,0)$.

\begin{claim}\label{clm:even-cycle-SBS-in-Kq_red_case2}
    For odd \(d\),
    \begin{equation}\label{eq:even-cycle-SBS-in-Kq_red_case2}
        T(a,a,0) \, T(b,b,d) + T(a,a,d) \, T(b,b,0)  \le T(a,b,0) \, T(a,b,d) + T(a,b,d) \, T(a,b,0).
    \end{equation}
\end{claim}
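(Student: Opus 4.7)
The plan is to reduce the claimed inequality to a clean elementary one and then verify it in two short steps. For odd $d$, we have $T(x,y,0) = x(y-1)^d$ and $T(x,y,d) = (x-1)(y-2)^d\bigl((x-1)^{d-1}-1\bigr)$. Substituting these into \eqref{eq:even-cycle-SBS-in-Kq_red_case2}, whose right-hand side equals $2\,T(a,b,0)\,T(a,b,d)$, and cancelling the common factor $(a-1)(b-1)$ reduces the claim, after rearrangement, to
\[
    bBR(A-1) \le aS\bigl[A + B(A-2)\bigr],
\]
where $A\defeq (a-1)^{d-1}$, $B\defeq (b-1)^{d-1}$, $R\defeq (a-2)^d$, and $S\defeq (b-2)^d$. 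Since $a\ge 3$ and $d\ge 3$ (as $d$ is odd and at least $2$), one has $A\ge 4$, $B\ge A$, and $R,S>0$.

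To prove this reduced inequality I will use a two-step chain. First, $a\le b$ gives $(a-2)(b-1)\le (a-1)(b-2)$ and hence $\bigl((a-2)/(b-2)\bigr)^d \le \bigl((a-1)/(b-1)\bigr)^d$; multiplying by $b/a$ rewrites this as $\frac{bR}{aS}\le \frac{cA}{B}$, where $c\defeq \frac{b(a-1)}{a(b-1)}\le 1$, and then multiplying both sides by $aSB(A-1)$ yields $bBR(A-1) \le c\,aSA(A-1)$. Second, since $c\le 1$, $B\ge A$, and $A-2>0$,
\[
    cA(A-1) \le A(A-1) = A(A-2)+A \le B(A-2)+A,
\]
so multiplying by $aS$ gives $c\,aSA(A-1) \le aS\bigl[A+B(A-2)\bigr]$. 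Chaining the two steps establishes the reduced inequality.

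The main obstacle is the initial algebraic reduction: while entirely elementary, it requires careful bookkeeping to see that all cross-terms cancel into such a clean form. Once that is in hand, the two comparisons follow transparently from $a\le b$, $A\ge 4$, and $B\ge A$.
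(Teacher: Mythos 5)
Your proof is correct. After cancelling the common factor $(a-1)(b-1)$, both your argument and the paper's reduce the claim to the same elementary inequality
\[
    bBR(A-1)\le aS\bigl[A+B(A-2)\bigr],
\]
in your notation $A=(a-1)^{d-1}$, $B=(b-1)^{d-1}$, $R=(a-2)^d$, $S=(b-2)^d$. The routes diverge from there. The paper proves it by multiplying the chain
\[
    a\Bigl(1-\tfrac1B\Bigr)\le a\le (a+1)\Bigl(1-\tfrac1A\Bigr)\le \Bigl(2a-\tfrac{bR}{S}\Bigr)\Bigl(1-\tfrac1A\Bigr)
\]
through by $ABS$, where the middle step relies on $a(a-1)^2\le (a+1)\bigl((a-1)^2-1\bigr)$ for $a\ge3$ and the last on $a-\tfrac{(a-2)^d}{(b-2)^d}b\ge1$. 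You instead start from $\tfrac{a-2}{b-2}\le\tfrac{a-1}{b-1}$, raise it to the $d$-th power, absorb the constant $c=\tfrac{b(a-1)}{a(b-1)}\le1$, and finish with the comparison $cA(A-1)\le A(A-2)+A\le B(A-2)+A$. Both are valid elementary chains; yours sidesteps the somewhat ad hoc pivot on $a+1$ and is a little more transparent to check. As a small aside, the identity asserted in the paper's verification of its second auxiliary bound appears to be off: $a(b-2)^2-(a-2)^2b-(b-2)^2$ actually equals $ab^2-a^2b+4a-b^2-4$, which differs from the stated $ab(b-a-1)+3b(b-a)+4(a-1)$ by $4b(b-a)$; the bound itself still holds, and in any case your route avoids that step entirely.
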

\begin{claimproof}
    Since \(d\geq 2\) is odd, \(d\ge3\).
    We shall use the elementary bound
    \begin{equation}\label{eq:C_2d-SBS-red2-step2}
        a\le (a+1) \biggl( 1-\frac{1}{(a-1)^2} \biggr) \le (a+1) \biggl( 1-\frac{1}{(a-1)^{d-1}} \biggr),
    \end{equation}
    which follows from $a(a-1)^2 - (a+1)\big((a-1)^2-1\big) = a(3-a)\le 0$.
    We also use
    \begin{equation}\label{eq:C_2d-SBS-red2-step1}
        a - \frac{(a-2)^d}{(b-2)^d} b
        \ge a - \frac{(a-2)^2}{(b-2)^2} b
        \ge 1,
    \end{equation}
    which follows from
    $a(b-2)^2 - (a-2)^2 b  - (b-2)^2
        = ab(b-a-1) + 3b(b-a) + 4(a-1) \ge 0$.
    Therefore,
    \[
        a \biggl( 1-\frac{1}{(b-1)^{d-1}} \biggr)
        \le a
        \overset{\eqref{eq:C_2d-SBS-red2-step2}}{\le} (a+1) \biggl( 1-\frac{1}{(a-1)^{d-1}} \biggr)
        \overset{\eqref{eq:C_2d-SBS-red2-step1}}{\le} \biggl( 2a - \frac{(a-2)^d}{(b-2)^d} b \biggr) \biggl( 1-\frac{1}{(a-1)^{d-1}} \biggr).
    \]
    Thus,
    \[
        a(a-1)^d\big( (b-1)^d - (b-1) \big) (b-2)^d
        \le \big( (a-1)^d - (a-1) \big)\big( 2a(b-2)^d - (a-2)^d b \big) (b-1)^d,
    \]
    which proves \eqref{eq:even-cycle-SBS-in-Kq_red_case2}.
\end{claimproof}

We now substitute \eqref{eq:num-colorings-even-cycle} into every term of \eqref{eq:even-cycle-SBS-in-Kq_goal}. If \(d\) is even, then \Cref{prop:sum-prod-red}~\ref{case:(prop:sum-prod-red)-1} and \Cref{clm:even-cycle-SBS-in-Kq_red_case1} verify the inequality. Otherwise, if \(d\) is odd, we slightly modify the sum to apply \Cref{prop:sum-prod-red}~\ref{case:(prop:sum-prod-red)-1} by pairing two terms in the boundary into one, i.e.,
\begin{gather}
    T(a,a,k) \, T(b,b,\ell) \le T(a,b,k) \, T(a,b,\ell)
    \qquad\text{for } 0\le k,\ell\le d,\ (k,\ell)\neq (0,d), (d,0);
    \label{eq:even-cycle-SBS-in-Kq_red_d-even_case1} \\
    T(a,a,0) \, T(b,b,d) + T(a,a,d) \, T(b,b,0)  \le T(a,b,0) \, T(a,b,d) + T(a,b,d) \, T(a,b,0),
    \label{eq:even-cycle-SBS-in-Kq_red_d-even_case2}
\end{gather}
which follows from \Cref{clm:even-cycle-SBS-in-Kq_red_case1-2,clm:even-cycle-SBS-in-Kq_red_case2}, respectively. This proves \eqref{eq:even-cycle-SBS-in-Kq_goal}.

\subsection{Complete multipartite graphs}\label{sec:complete_multipartite}
In this subsection, we show that the complete \(k\)-partite graphs are cross-bipartite swapping in the complete graphs.
For nonnegative integers \(r_1,\dots,r_k\),
let \(K(r_1,\dots,r_k)\) be the multipartite graph with the part sizes \(r_1,\dots,r_k\).
In particular, $r_1=r_2=\cdots=r_k=1$ makes the complete graph $K_k$ and $K(r)$ denotes the $r$ isolated vertices. 

Let \(A, B\subseteq K_q\). By~\Cref{lem:SBS-in-Kq_simp-condition}, we may assume that \(A\subseteq B\) or \(B\subseteq A\), so the homomorphism counts depend only depend on $a\defeq|A|$ and $b\defeq|B|$ with $a\leq b$.
For brevity, write
\[
    N_k(r_1,\dots,r_k; a) \defeq N(K(r_1,\dots,r_k); a) = \hom(K(r_1,\dots,r_k), K_q[A]).
\]
Our goal~\eqref{eq:SBS-goal} is to show that for all positive integers \(a\le b\) and \(r_1,\dots,r_k\),
\begin{equation}\label{eq:SBS-goal-comp-mpart}
    N_k(r_1,\dots,r_k; a) \, N_k(r_1,\dots,r_k; b)
    \le N(K(r_1,\dots,r_k)\times K_2; a,b).
\end{equation}
To show \eqref{eq:SBS-goal-comp-mpart}, we use induction on \(k\).
For the induction step, the following lemma that resembles \Cref{cor:G-vol} will be useful. Denote by \(N_k^{(1)}(r_1,\dots,r_k; a,b)\) the number of homomorphic copies of \(K(r_1,\dots,r_k)\) in \(K_q\), where the first part of size \(r_1\) maps into \(A\) and the other parts map into \(B\).

\begin{lemma}\label{lem:comp-mpart_weak-AF}
    For positive integers \(a\le b\), \(k\ge2\), and \(r_1,\dots,r_k\),
    \[
        N_k(r_1,\dots,r_k; a) \, N_k(r_1,\dots,r_k; b)
        \le N_k^{(1)}(r_1,\dots,r_k; a,b) \, N_k^{(1)}(r_1,\dots,r_k; b,a).
    \]
\end{lemma}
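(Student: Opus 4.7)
The plan is to prove the lemma by induction on $k$, leveraging the sum-product reduction technique of \Cref{prop:sum-prod-red}.

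\textbf{Base case $k = 2$.} For $H = K_{r_1, r_2}$, I would decompose each proper colouring by the sizes $s_1, s_2$ of the colour sets used by the two parts. With $M_s \defeq \operatorname{Sur}(r_1, s_1) \operatorname{Sur}(r_2, s_2)$ and $P(a, s) \defeq \binom{a}{s_1}\binom{a-s_1}{s_2}$, $Q(a, b, s) \defeq \binom{a}{s_1}\binom{b-s_1}{s_2}$, $R(a, b, s) \defeq \binom{a}{s_2}\binom{b-s_2}{s_1}$, the four quantities take the form
\[
    N_2(r; a) = \sum_s P(a, s) M_s,\quad
    N_2^{(1)}(r; a, b) = \sum_s Q(a, b, s) M_s,\quad
    N_2^{(1)}(r; b, a) = \sum_s R(a, b, s) M_s,
\]
with the sum ranging over $s = (s_1, s_2) \in \mathbb{N}_0^2$. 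Item~(2) of \Cref{prop:sum-prod-red}, applied with the symmetric pairing $(s, s') \leftrightarrow (s', s)$, reduces the lemma to the pairwise inequality
\[
    P(a, k) P(b, \ell) + P(a, \ell) P(b, k) \leq Q(a, b, k) R(a, b, \ell) + Q(a, b, \ell) R(a, b, k),
\]
which I would verify by direct binomial algebra exploiting $b \geq a$.

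\textbf{Inductive step $k \geq 3$.} I would condition on the colour set $T$ used by part $k$ in each of the four counts. This yields, for each count, a decomposition of the form $\sum_T (\text{part-}k\text{ factor}) \cdot (N_{k-1}\text{-type quantity on the palette with } T \text{ removed})$. Since $A \subseteq B$ implies $A \setminus T \subseteq B \setminus T$ for any $T$, the inductive hypothesis applies to the inner $N_{k-1}$ factors, and re-assembly via \Cref{prop:sum-prod-red} with a suitable pairing over part-$k$ configurations should yield the $k$-case inequality.

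\textbf{Main obstacle.} The chief difficulty is the base-case binomial inequality. Termwise comparison fails---for instance, at $a = 3$, $b = 4$, $k = (1, 1)$, $\ell = (2, 2)$ one has $P(a, k) P(b, \ell) = 36 > 27 = Q(a, b, k) R(a, b, \ell)$---so the symmetric pairing is essential, and the analysis must genuinely exploit how the two paired terms balance under $b \geq a$. In the inductive step, additional care is needed to match part-$k$ configurations across the four quantities, since the palette constraint on part $k$ differs between them (it uses $B$ in $N_k^{(1)}(r; a, b)$ but $A$ in $N_k^{(1)}(r; b, a)$), and this asymmetry must be absorbed by the outer \Cref{prop:sum-prod-red} pairing. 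A possibly cleaner alternative would be to extend \Cref{thm:AFM-hom-Lor} by showing $h_{K(r_1, \ldots, r_k)}(\mathbf{x}; K_q)$ is Lorentzian and then invoke \Cref{prop:Lor-AF-ineq} as in the proof of \Cref{cor:G-vol}; however, that extension has its own technical obstacle, because the partial-derivative reduction introduces per-part palette constraints (a fixed vertex with colour $\nu$ forbids $\nu$ in other parts but not its own) that do not obviously remain within the complete multipartite family.
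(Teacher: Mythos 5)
Your approach is genuinely different from the paper's, and unfortunately both halves of it have real gaps.

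The paper does \emph{not} induct on $k$. Instead, it proves a stronger statement --- allowing the two sides to have different part-sizes $s_i\ge r_i$ --- and inducts on $b$ (\Cref{clm:(comp-mpart_weak-AF)-pf_reduction1}) to collapse to the diagonal case $b=a$. The point of that reduction is that once $b=a$, the cross-bipartite objects $N_k^{(1)}(\,\cdots;a,a)$ are just ordinary counts $N_k(\,\cdots;a)$ with asymmetric part-sizes, so the annoying $A$-vs-$B$ asymmetry disappears entirely. The rest is a chain of telescoping log-submodularity inequalities on the $r_i$ and on the palette size $a$, ultimately grounded in log-concavity of Stirling numbers via the $N_k\to N_{k-1}$ expansion of \Cref{clm:comp-mpart_Nk-into-N{k-1}} and \Cref{prop:sum-prod-red}.

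Your inductive step, by contrast, keeps the $A$-vs-$B$ asymmetry around and conditions on part $k$. This is where it breaks. Take $T\subseteq A$ for the part-$k$ colour set in $N_k(r;a)$ and $N_k^{(1)}(r;b,a)$ (where part $k$ lives in $A$), and $T'\subseteq B$ for the part-$k$ colour set in $N_k(r;b)$ and $N_k^{(1)}(r;a,b)$ (where part $k$ lives in $B$). After cancelling the surjection factors, the termwise inequality you would need from \Cref{prop:sum-prod-red}~\ref{case:(prop:sum-prod-red)-1} is
\[
  N_{k-1}(r';a-|T|)\,N_{k-1}(r';b-|T'|)
  \;\le\;
  N_{k-1}^{(1)}\bigl(r';\,a-|A\cap T'|,\,b-|T'|\bigr)\,
  N_{k-1}^{(1)}\bigl(r';\,b-|T|,\,a-|T|\bigr).
\]
The induction hypothesis applied to $(a',b')=(a-|T|,b-|T'|)$ gives the \emph{transpose pair} $N_{k-1}^{(1)}(r';a-|T|,b-|T'|)\cdot N_{k-1}^{(1)}(r';b-|T'|,a-|T|)$, but the right-hand side above is not a transpose pair: the first factor has palette size $a-|A\cap T'|$, not $a-|T|$, and the second has $(b-|T|,a-|T|)$, not $(b-|T'|,a-|T|)$. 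These agree only when $|T|=|T'|$ and $T'\subseteq A$; in general the discrepancy $|T'|-|A\cap T'| = |T'\cap(B\setminus A)|$ is positive, and there is no monotonicity of $N_{k-1}^{(1)}$ in its two slots that lets you bridge the gap (increasing one palette must keep the inclusion $A'\subseteq B'$, which is not preserved here). No pairing over $(T,T')$ that I can see repairs this, and you acknowledge the issue without resolving it.

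The base case is a separate unproved assertion. Your decomposition of the $N_2$-quantities by colour-set sizes is correct, and the reduction to the pairwise binomial inequality is the right shape, but ``direct binomial algebra exploiting $b\ge a$'' is not a proof --- and as you yourself observe, termwise comparison fails, so the cancellation really does need to be exhibited. Until both the base-case inequality and the inductive mismatch are resolved, the proposal does not establish the lemma.
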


Setting \(r_1=\dots=r_k=1\) in \Cref{lem:comp-mpart_weak-AF}, we obtain
\[
    V_{K_k}(\one_A,\dots,\one_A; K_q)
    \cdot V_{K_k}(\one_B,\dots,\one_B; K_q)
    \le
    V_{K_k}(\one_A, \underbrace{\one_B,\dots,\one_B}_{k-1}; K_q)
    \cdot V_{K_k}(\one_B, \underbrace{\one_A,\dots,\one_A}_{k-1}; K_q),
\]
which is exactly \Cref{cor:G-vol} with \(t=k\), \(G=K_q\), \(\ba=\one_A\), and \(\bb=\one_B\).
In fact, we use \Cref{lem:comp-mpart_weak-AF} in the induction step in the same way that \Cref{cor:G-vol} is used in the proof of \Cref{thm:G-vol-ineq-KtxK2}.

\begin{proof}[Proof of~\eqref{eq:SBS-goal-comp-mpart} using \Cref{lem:comp-mpart_weak-AF}]
As mentioned, we use induction on \(k\). The base case $k=1$ is clear as
\[
    N_1(r_1; a) \, N_1(r_1; b)
    = a^{r_1} b^{r_1}
    = N(K(r_1)\times K_2; a,b).
\]
Suppose \(k\ge2\) and \eqref{eq:SBS-goal-comp-mpart} holds for all \(a\le b\), \(r_1,\dots,r_k\), and \(k\) replaced by \(k-1\).

Let \(V_1\) and $V_1'$ be the vertex set of \(K(r_1,\dots,r_k)\times K_2\) of size \(r_1\), which correspond to two copies of the first part of $K(r_1,\dots,r_k)$ of size $r_1$.
Each homomorphism counted by \(N(K(r_1,\dots,r_k)\times K_2; a,b)\) can be obtained by fixing two maps \(\psi\colon V_1\to A\) and \(\psi'\colon V'_1\to B\) and finding a homomorphic copy of \(K(r_2,\dots,r_k)\times K_2\) in \(K_q\) by mapping the two parts of its canonical bipartition into \(A_{\psi'}\defeq A\setminus\psi'(V'_1)\) and \(B_{\psi}\defeq B\setminus\psi(V_1)\), respectively. Therefore,
\begin{align}
    N(K(r_1,\dots,r_k)\times K_2; a,b)
        \nonumber
    &= \sum_{\substack{\psi\colon V_1\to A \\ \psi'\colon V'_1\to B}}
        \bhom \bigl( K(r_2,\dots,r_k)\times K_2, \, K_q[A_{\psi'},B_{\psi}] \bigr)
        \nonumber
    \\
    &\ge \sum_{\substack{\psi \colon V_1\to A \\ \psi' \colon V'_1\to B}}
        N\bigl( K(r_2,\dots,r_k)\times K_2; \, \abs{ A_{\psi'} }, \abs{ B_{\psi} } \bigr).
        \label{eq:SBS-goal-comp-mpart_step1}
\end{align}
To see why the inequality holds, recall that in~\eqref{eq:N(H;a,b)_reduction}, we have shown that $\bhom(H\times K_2, K_q[A,B])$ decreases as we assume $A\subseteq B$, which corresponds to the definition of $N(H\times K_2;a,b)$.
By the induction hypothesis, \eqref{eq:SBS-goal-comp-mpart_step1} is at least
\begin{align}
    &\sum_{\substack{\psi\colon V_1\to A \\ \psi'\colon V'_1\to B}}
        N_{k-1}\bigl( r_2,\dots,r_k ; \abs{ B_{\psi} }\bigr)
            \cdot N_{k-1}\bigl( r_2,\dots,r_k ; \abs{ A_{\psi'} }\bigr)
        \nonumber
    \\ ={} & \sum_{\psi\colon V_1\to A} N_{k-1}\bigl( r_2,\dots,r_k ; \abs{ B_{\psi} } \bigr)
        \cdot \sum_{\psi'\colon V'_1\to B} N_{k-1}\bigl( r_2,\dots,r_k ; \abs{ A_{\psi'} } \bigr)
        \nonumber
    \\ ={} & \sum_{\psi\colon V_1\to A} \hom \bigl( K(r_2,\dots,r_k), K_q[B_{\psi}] \bigr)
        \cdot \sum_{\psi'\colon V'_1\to B} \hom \bigl( K(r_2,\dots,r_k), K_q[A_{\psi'}] \bigr).
        \label{eq:SBS-goal-comp-mpart_step2}
\end{align}
Since the first sum in \eqref{eq:SBS-goal-comp-mpart_step2} counts the number of homomorphisms from \(K(r_1,\dots,r_k)\) to \(K_q\) which maps \(V_1\) into \(A\) via~$\psi$ and the other parts into \(B\) while avoiding colours used by $\psi$, the sum $\sum_{\psi\colon V_1\to A} \hom \bigl( K(r_2,\dots,r_k), K_q[B_{\psi}] \bigr)$ equals $N_k^{(1)}(r_1,\dots,r_k; a,b)$. By the same reason, the second sum in \eqref{eq:SBS-goal-comp-mpart_step2} equals \(N_k^{(1)}(r_1,\dots,r_k; b,a)\). Finally, we apply \Cref{lem:comp-mpart_weak-AF} to obtain the desired inequality
\[
    N(K(r_1,\dots,r_k)\times K_2; a,b)
    \ge
    N_k(r_1,\dots,r_k; a) \, N_k(r_1,\dots,r_k; b),
\]
which complete the induction.
\end{proof}

It remains to prove \Cref{lem:comp-mpart_weak-AF}. Unlike~\Cref{cor:G-vol}, $H=K(r_1,\dots,r_k)$ does not give the Lorentzian property of $h_H(\bx;K_q)$ in general; for more discussions, see the concluding remarks. This forces us to take more hands-on approaches.

\begin{proof}[Proof of \Cref{lem:comp-mpart_weak-AF}]
    Fix \(k\ge 2\). We shall prove the following slightly stronger statement:
    \begin{equation}\label{eq:comp-mpart_weak-AF_stronger}
        \begin{aligned}
            &%
            \text{For all \(k\le a\le b\) and nonnegative integers \(r_i\le s_i\), \(1\le i\le k\),}
            \\& N_k(s_1,\dots,s_k; a) \, N_k(r_1,\dots,r_k; b)
                \le N_k^{(1)}(s_1,r_2,\dots,r_k; a,b) \, N_k^{(1)}(r_1,s_2,\dots,s_k; b,a).
        \end{aligned}
    \end{equation}
   Here we assume \(k\le a\le b\) since \(a<k\) makes the LHS vanish. Allowing $s_i$ or $r_i$ to be zero is necessary for the induction, in particular, in~\Cref{clm:(comp-mpart_weak-AF)-pf_reduction1}; for instance, $N_k(0,r_2,\dots,r_k;b)$ means $N_{k-1}(r_2,\dots,r_k;b)$.

   The proof idea relies on multiple reduction steps, on which we briefly give a sketch first. 
   First, using induction on \(b\), where the induction step is given in \Cref{clm:(comp-mpart_weak-AF)-pf_reduction1}, we reduce the problem to the base case when \(b=a\), i.e.,
   \begin{equation}\label{eq:comp-mpart_weak-AF_stronger_b=a}
        \begin{aligned}
            &%
            \text{for all \(a \ge k\) and \(0\le r_i\le s_i\), \(1\le i\le k\),}
            \\& N_k(s_1,\dots,s_k; a) \, N_k(r_1,\dots,r_k; a)
                \le N_k(s_1,r_2,\dots,r_k; a) \, N_k(r_1,s_2,\dots,s_k; a).
        \end{aligned}
    \end{equation}
    Second, a telescoping argument in \Cref{clm:(comp-mpart_weak-AF)-pf_reduction(-4)} reduces this to the case when \(s_1=r_1+1\), \(s_2=r_2+1\), and \(s_i=r_i\) for \(3\le i\le k\), i.e.,
    \begin{align*}
        &%
        \text{for all \(a\ge k\) and \(r_i\ge 0\), \(1\le i\le k\),}
        \\&\hphantom{{}\le{}} N_k(r_1+1,r_2+1, r_3\dots,r_k; a) \, N_k(r_1,r_2, r_3\dots,r_k; a)
        \\&\le N_k(r_1+1,r_2, r_3\dots,r_k; a) \, N_k(r_1,r_2+1, r_3\dots,r_k; a).
    \end{align*}
    Third, rewriting each term \(N_k(\blank)\) as a sum of terms on \(N_{k-1}(\blank)\) allows us to apply \Cref{prop:sum-prod-red}~\ref{case:(prop:sum-prod-red)-2}, which reduces the problem further to the following statement, shown in \Cref{clm:(comp-mpart_weak-AF)-pf_reduction(-3)}:
    \begin{align*}
        &\text{for all \(a\ge k\), \(0\le \ell\le m\le a\), and \(r_i\ge 0\), \(2\le i\le k\),}
        \\&\hphantom{{}\le{}}
            N_{k-1}(r_2, r_3,\dots,r_k; a-\ell) \, N_{k-1}(r_2+1, r_3,\dots,r_k; a-m)
        \\&\le N_{k-1}(r_2+1, r_3,\dots,r_k; a-\ell) \, N_{k-1}(r_2, r_3,\dots,r_k; a-m).
    \end{align*}
    Fourth, another telescoping argument in \Cref{clm:(comp-mpart_weak-AF)-pf_reduction(-2)} will reduce this to the case $m=\ell+1$, i.e., 
    \begin{align*}
        &\text{for all \(a\ge k-1\) and \(r_i\ge 0\), \(2\le i\le k\),}
        \\&\hphantom{{}\le{}}
            N_{k-1}(r_2, r_3,\dots,r_k; a+1) \, N_{k-1}(r_2+1,r_3,\dots,r_k; a)
        \\&\le N_{k-1}(r_2+1,r_3,\dots,r_k; a+1) \, N_{k-1}(r_2, r_3,\dots,r_k; a).
    \end{align*}
    Finally, \Cref{clm:(comp-mpart_weak-AF)-pf_reduction(-1)} will prove this by writing each term of the form \(N_{k-1}(\blank)\) as a sum of terms on \(N_{k-2}(\blank)\) and applying \Cref{prop:sum-prod-red}~\ref{case:(prop:sum-prod-red)-2}.

    \medskip
    
    We begin with the first reduction step.
    
    \begin{claim}\label{clm:(comp-mpart_weak-AF)-pf_reduction1}
        \eqref{eq:comp-mpart_weak-AF_stronger_b=a} implies \eqref{eq:comp-mpart_weak-AF_stronger}.
    \end{claim}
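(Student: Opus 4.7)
I would proceed by induction on $b\ge a$, with the base case $b=a$ being immediate: in that case one may take $A=B$, so both $N_k^{(1)}$-factors on the right-hand side of \eqref{eq:comp-mpart_weak-AF_stronger} collapse to ordinary $N_k$-factors, and the desired inequality becomes exactly \eqref{eq:comp-mpart_weak-AF_stronger_b=a}.

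For the inductive step, fix $b>a$, fix $A\subseteq B\subseteq [q]$ with $\abs{A}=a$ and $\abs{B}=b$, and single out a distinguished color $c^\star\in B\setminus A$. The plan is to condition each of $F(b)\defeq N_k(r_1,\dots,r_k;b)$, $G_1(b)\defeq N_k^{(1)}(s_1,r_2,\dots,r_k;a,b)$ and $G_2(b)\defeq N_k^{(1)}(r_1,s_2,\dots,s_k;b,a)$ on how $c^\star$ is used. The structural fact driving the argument is that, since vertices in distinct parts of a complete multipartite graph must receive distinct colors, a given color is used by at most one part; moreover, because $c^\star\notin A$, it cannot appear on any part constrained to map into $A$. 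Hence $F(b)=F(b-1)+\sum_{i=1}^k \Delta_i F$, $G_1(b)=G_1(b-1)+\sum_{i=2}^k \Delta_i G_1$ and $G_2(b)=G_2(b-1)+\Delta_1 G_2$, where each $\Delta_i$-term collects those homomorphisms that place $c^\star$ on some non-empty subset of part $i$; a standard enumeration gives $\Delta_i F=\sum_{j=1}^{r_i}\binom{r_i}{j} N_k(r_1,\dots,r_i-j,\dots,r_k;b-1)$ and analogous expressions for $\Delta_i G_1$ and $\Delta_1 G_2$.

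Writing $L\defeq N_k(s_1,\dots,s_k;a)$, I would then expand $G_1(b)\cdot G_2(b)$ into its four-term product and match each piece against the corresponding piece of $L\cdot F(b)$. The baseline term $G_1(b-1)\cdot G_2(b-1)$ dominates $L\cdot F(b-1)$ directly by the induction hypothesis. The term $L\cdot \Delta_1 F$, which exists only because $c^\star$ may be placed on part~$1$ on the $F$-side, is dominated by $G_1(b-1)\cdot \Delta_1 G_2$ upon applying the induction hypothesis with $r_1$ replaced by $r_1-j$ for each $j$ and summing against $\binom{r_1}{j}$ (using $r_1-j\le s_1$). Symmetrically, for each $i\ge 2$ the induction hypothesis with $r_i$ replaced by $r_i-j$ yields $L\cdot \Delta_i F\le \Delta_i G_1\cdot G_2(b-1)$; summing over $i\ge 2$ and applying the trivial monotonicity $G_2(b-1)\le G_2(b)=G_2(b-1)+\Delta_1 G_2$ absorbs the leftover cross term $\bigl(\sum_{i\ge 2}\Delta_i G_1\bigr)\cdot \Delta_1 G_2$ in the expansion of $G_1(b)\cdot G_2(b)$.

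The main obstacle I anticipate is purely organizational: one must keep careful track of the four pieces of $G_1(b)\cdot G_2(b)$ and match them against the three pieces of $L\cdot F(b)$ (non-use of $c^\star$, use in part $1$, use in parts $2,\dots,k$), verifying in each case that the induction hypothesis is being invoked in a direction compatible with the accompanying monotonicity. The asymmetry between $G_1$ (where $c^\star$ may land only in parts $2,\dots,k$) and $G_2$ (where $c^\star$ may land only in part~$1$) is precisely what makes this matching feasible, and it mirrors the asymmetry already present in \eqref{eq:comp-mpart_weak-AF_stronger}.
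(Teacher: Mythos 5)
Your proposal is correct and is essentially the same argument as the paper's: induction on $b$ with base case $b=a$, a recursive decomposition of each factor according to how the new colour $c^\star$ (the paper's $\nu$) is distributed among the parts, a termwise application of the induction hypothesis (with $r_i$ lowered to $r_i-j$), and a final absorption of the nonnegative cross term $\Sigma_1\Sigma_2$. The only cosmetic difference is that you step from $b-1$ to $b$ rather than from $b$ to $b+1$.
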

    \begin{claimproof}
        It suffices to show that \eqref{eq:comp-mpart_weak-AF_stronger} for fixed $(k,a,b)$ with \(k\le a\le b\) implies \eqref{eq:comp-mpart_weak-AF_stronger} for $(k,a,b+1)$.
        Note first that
        \begin{equation}\label{eq:comp-mpart_weak-AF_red1_b+1_eq1}
            N_k(r_1,\dots,r_k; b+1)
            = N_k(r_1,\dots,r_k; b)
                + \sum_{i=1}^k \sum_{\ell=1}^{r_i} \binom{r_i}{\ell} N_k(r_1,\dots, r_i-\ell, \dots,r_k; b),
        \end{equation}
        where \(N_k(r_1,\dots,r_i-\ell,\dots,r_k;b)\) denotes \(N_k(r_1,\dots, r_k; b)\) with \(r_i\) replaced by \(r_i-\ell\).
        Indeed, the term \(N_k(r_1,\dots,r_k; b+1)\) counts the number of proper vertex-colourings \(K(r_1,\dots,r_k)\) using \(b+1\) colours. Fix one of the $b+1$ colours, denoted by \(\nu\). Let \(V_1\sqcup\dots\sqcup V_k\) be the $k$-partition of \(K(r_1,\dots,r_k)\) where \(\abs{V_i}=r_i\) for each \(1\le i\le k\). 
        The number of colourings that do not use \(\nu\) is \(N_k(r_1,\dots,r_k; b)\). Otherwise, $\nu$ must be used in exactly one part and the number of colourings that use \(\nu\) exactly \(\ell\) times in \(V_i\) is \(\binom{r_i}{\ell} N_k(r_1,\dots, r_i-\ell, \dots,r_k; b)\), for each \(1\le i\le k\) and \(1\le\ell\le r_i\). 

        An analogous argument also gives
        \begin{equation}\label{eq:comp-mpart_weak-AF_red1_b+1_eq2}
            N_k^{(1)}(s_1,r_2,\dots,r_k; a,b+1)
            = N_k^{(1)}(s_1,r_2,\dots,r_k; a,b) + \Sigma_1,
        \end{equation}
        where
        \[
            \Sigma_1 \defeq
            \sum_{i=2}^k \sum_{\ell=1}^{r_i} \binom{r_i}{\ell} N_k^{(1)}(s_1,r_2,\dots,r_i-\ell,\dots,r_k; a,b).
        \]
        Here, \(N_k^{(1)}(s_1,r_2,\dots,r_i-\ell,\dots,r_k; a,b)\) denotes \(N_k^{(1)}(s_1,r_2,\dots,r_k; a,b)\) with \(r_i\) replaced by \(r_i-\ell\). 
        The key difference between \eqref{eq:comp-mpart_weak-AF_red1_b+1_eq1} and \eqref{eq:comp-mpart_weak-AF_red1_b+1_eq2} is that we choose the colour $\nu$ not amongst the $a$ colours and the sum analyses how all the parts but the $s_1$-vertex part use the colour $\nu$.
        Similarly,
        \begin{equation}\label{eq:comp-mpart_weak-AF_red1_b+1_eq3}
            N_k^{(1)}(r_1,s_2,\dots,s_k; b+1,a)
            = N_k^{(1)}(r_1,s_2,\dots,s_k; b,a) + \Sigma_2,
        \end{equation}
        where
        \[
            \Sigma_2 \defeq \sum_{\ell=1}^{r_1} \binom{r_1}{\ell} N_k^{(1)}(r_1-\ell, s_2,\dots,s_k; b,a).
        \]
        Indeed, this is obtained by analysing how $\nu$ can be used in the first $r_1$-vertex part.        

        Now we show \eqref{eq:comp-mpart_weak-AF_stronger} for $(k,a,b+1)$. Applying \eqref{eq:comp-mpart_weak-AF_red1_b+1_eq1} and expanding out yields
        \begin{align}
            & N_k(s_1,\dots,s_k; a) \, N_k(r_1,\dots,r_k; b+1) \nonumber
            \\={} & N_k(s_1,\dots,s_k; a) \, N_k(r_1,\dots,r_k; b)
                    + \sum_{i=1}^k \sum_{\ell=1}^{r_i}
                        \binom{r_i}{\ell}
                        N_k(s_1,\dots,s_k; a) \, N_k(r_1,\dots, r_i-\ell, \dots,r_k; b).
                    \label{eq:comp-mpart_weak-AF_red1_ind-applicant}
        \end{align}
        Since \eqref{eq:comp-mpart_weak-AF_stronger} is assumed to hold for \((k,a,b)\), \eqref{eq:comp-mpart_weak-AF_red1_ind-applicant} is at most
        \begin{align*}
            & N_k^{(1)}(s_1,r_2,\dots,r_k; a,b) \, N_k^{(1)}(r_1,s_2,\dots,s_k; b,a)
            \\ {}+{} & \sum_{\ell=1}^{r_1} \binom{r_1}{\ell}
                N_k^{(1)}(s_1,r_2,\dots,r_k; a,b) \, N_k^{(1)}(r_1-\ell,s_2,\dots,s_k; b,a)          
            \\ {}+{} & \sum_{i=2}^k \sum_{\ell=1}^{r_i} \binom{r_i}{\ell}
                N_k^{(1)}(s_1,r_2,\dots,r_i-\ell,\dots,r_k; a,b) \, N_k^{(1)}(r_1,s_2,\dots,s_k; b,a),
        \end{align*}
        where we separate the case $i=1$ in~\eqref{eq:comp-mpart_weak-AF_red1_ind-applicant} from the others to give the upper bound.
        This rewrites as
        \begin{align*}
            & N_k^{(1)}(s_1,r_2,\dots,r_k; a,b) \, N_k^{(1)}(r_1,s_2,\dots,s_k; b,a)
            \\ {}+{} & N_k^{(1)}(s_1,r_2,\dots,r_k; a,b) \, \Sigma_2
            + N_k^{(1)}(r_1,s_2,\dots,s_k; b,a) \, \Sigma_1,
        \end{align*}
        which is the product of \eqref{eq:comp-mpart_weak-AF_red1_b+1_eq2} and \eqref{eq:comp-mpart_weak-AF_red1_b+1_eq3} minus $\Sigma_1\Sigma_2$. Therefore, the above is at most
        \[
            N_k^{(1)}(s_1,r_2,\dots,r_k; a,b+1) \, N_k^{(1)}(r_1,s_2,\dots,s_k; b+1,a),
        \]
        which concludes the proof.
    \end{claimproof}

    Before getting into the remaining steps, we see how \(N_k(\blank)\) can be written as a sum of terms on \(N_{k-1}(\blank)\). This will be used in the proofs of \Cref{clm:(comp-mpart_weak-AF)-pf_reduction(-1),clm:(comp-mpart_weak-AF)-pf_reduction(-3)}.

    \begin{claim}\label{clm:comp-mpart_Nk-into-N{k-1}}
        For nonnegative integers $r_1,r_2,\dots,r_k$,
        \[
            N_k(r_1,r_2,\dots,r_k; a)
            = \sum_{\ell=0}^a \binom{a}{\ell} \stirling{r_1}{\ell} \ell!\, N_{k-1}(r_2,\dots,r_k; a-\ell).\footnote{\(\stirling{r}{\ell}\) is the Stirling number of the second kind---the number of ways to partition \([r]\) into \(\ell\) nonempty subsets.}
        \]
    \end{claim}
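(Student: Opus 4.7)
The plan is to prove the identity by a direct double-counting argument, classifying proper $a$-colourings of $K(r_1,\dots,r_k)$ by the number of distinct colours used on the first part $V_1$ of size $r_1$.

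Let $V_1 \sqcup \dots \sqcup V_k$ be the canonical $k$-partition with $\abs{V_i} = r_i$. A proper $a$-colouring $\phi$ of $K(r_1,\dots,r_k)$ assigns colours so that no two vertices in different parts share a colour; equivalently, the colour sets $\phi(V_1), \phi(V_2), \dots, \phi(V_k)$ are pairwise disjoint subsets of $[a]$. For each $0 \le \ell \le a$, I count the colourings $\phi$ with $\abs{\phi(V_1)} = \ell$. Building such a $\phi$ amounts to: (i) choosing the set $\phi(V_1)$ of size $\ell$ from $[a]$, which contributes $\binom{a}{\ell}$; (ii) partitioning $V_1$ into $\ell$ nonempty colour classes and assigning the $\ell$ chosen colours to them, which contributes $\stirling{r_1}{\ell} \ell!$; and (iii) properly colouring the remaining parts $V_2, \dots, V_k$ using only the $a-\ell$ colours in $[a] \setminus \phi(V_1)$, since every vertex in $V_2 \cup \dots \cup V_k$ is adjacent to every vertex in $V_1$. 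The number of such colourings is precisely $N_{k-1}(r_2,\dots,r_k; a-\ell)$, since restricting $\phi$ to $V_2 \sqcup \dots \sqcup V_k$ gives a proper colouring of $K(r_2,\dots,r_k)$ using the available $a-\ell$ colours, and conversely any such proper colouring extends uniquely.

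Summing over $\ell \in \{0,1,\dots,a\}$ then gives
\[
    N_k(r_1,\dots,r_k; a)
    = \sum_{\ell=0}^{a} \binom{a}{\ell} \stirling{r_1}{\ell} \ell!\, N_{k-1}(r_2,\dots,r_k; a-\ell),
\]
as claimed. There is no genuine obstacle; the only care needed is to recognize that the three choices in (i)--(iii) are independent and jointly determine $\phi$, and that boundary cases are correctly handled by the convention $\stirling{0}{0}=1$ (when $r_1=0$, only the term $\ell=0$ survives, consistent with $N_k(0,r_2,\dots,r_k;a) = N_{k-1}(r_2,\dots,r_k;a)$).
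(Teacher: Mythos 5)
Your proof is correct and takes essentially the same approach as the paper: classify proper colourings by $\ell = \abs{\phi(V_1)}$, choose the $\ell$-set of colours, count surjections $V_1 \to \phi(V_1)$ via $\stirling{r_1}{\ell}\ell!$, and colour the remaining parts with the unused $a-\ell$ colours.
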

    \begin{claimproof}
        Let \(V_1\sqcup \cdots \sqcup V_k\) be the $k$-partition of \(K(r_1,\dots,r_k)\) where \(\abs{V_i}=r_i\) for \(1\le i\le k\).
        For \(0\le \ell\le a\), the number of vertex-colourings \(\phi\) counted by \(N_k(r_1,r_2,\dots,r_k; a)\) with \(\abs{\phi(V_1)}=\ell\) is
        \[
            T(r_1,r_2,\ell)
            \defeq 
            \binom{a}{\ell} \stirling{r_1}{\ell} \ell!\, N_{k-1}(r_2,\dots,r_k; a-\ell).
        \]
        Indeed, there are \(\binom{a}{\ell}\) choices to choose the set \(\phi(V_1)\), and \(\stirling{r}{\ell} \ell!\) ways to colour \(V_1\) using all the colours from \(\phi(V_1)\).
        Having fixed the values of \(\phi\) on \(V_1\), there are \(N_{k-1}(r_2,\dots,r_k; a-\ell)\) ways to colour \(K(r_1,r_2,\dots,r_k)\setminus V_1\cong K(r_2,r_3,\dots,r_k)\) using colours not in \(\phi(V_1)\).
        Thus,
        \begin{equation}\label{eq:comp-mpart_weak-AF_count1}
            N_k(r_1,r_2,\dots,r_k; a)
            = \sum_{\ell=0}^a T(r_1,r_2,\ell)
            = \sum_{\ell=0}^a \binom{a}{\ell} \stirling{r_1}{\ell} \ell!\, N_{k-1}(r_2,\dots,r_k; a-\ell),
        \end{equation}
        as claimed.
    \end{claimproof}

    The remaining parts of the proof build upon `log-submodularity' results on $N_{k}(\blank)$, which formalise the aforementioned reduction steps in the reverse order. In a sense, our goal~\eqref{eq:comp-mpart_weak-AF_stronger_b=a} itself can also be seen as a log-submodularity result.

    \begin{claim}\label{clm:(comp-mpart_weak-AF)-pf_reduction(-1)}
        For all \(a\ge k-1\) and \(r_2,\dots,r_k\ge 0\),
        \begin{equation}\label{eq:(comp-mpart_weak-AF)-pf_reduction(-1)}
        \begin{aligned}
            & N_{k-1}(r_2, r_3,\dots,r_k; a+1) \, N_{k-1}(r_2+1,r_3,\dots,r_k; a)
            \\ \le{} & N_{k-1}(r_2+1,r_3,\dots,r_k; a+1) \, N_{k-1}(r_2, r_3,\dots,r_k; a).
        \end{aligned}
        \end{equation}
    \end{claim}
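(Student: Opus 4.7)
The plan is to use the Stirling-type expansion
\[
    N_{k-1}(r, r_3, \ldots, r_k; a) = \sum_{\ell \geq 0} \binom{a}{\ell}\stirling{r}{\ell}\ell!\,N_{k-2}(r_3, \ldots, r_k; a - \ell)
\]
from \Cref{clm:comp-mpart_Nk-into-N{k-1}} and then apply \Cref{prop:sum-prod-red}~\ref{case:(prop:sum-prod-red)-2}. Writing $g(m) \defeq N_{k-2}(r_3, \ldots, r_k; m)$ and $T_r(\ell, a) \defeq \binom{a}{\ell}\stirling{r}{\ell}\ell!$, I set
\[
    a_\ell \defeq T_{r_2}(\ell, a{+}1)\,g(a{+}1{-}\ell), \quad b_\ell \defeq T_{r_2+1}(\ell, a)\,g(a{-}\ell),
\]
\[
    c_\ell \defeq T_{r_2+1}(\ell, a{+}1)\,g(a{+}1{-}\ell), \quad d_\ell \defeq T_{r_2}(\ell, a)\,g(a{-}\ell),
\]
so that $\sum_\ell a_\ell = N_{k-1}(r_2; a{+}1)$, $\sum_\ell b_\ell = N_{k-1}(r_2{+}1; a)$, $\sum_\ell c_\ell = N_{k-1}(r_2{+}1; a{+}1)$, and $\sum_\ell d_\ell = N_{k-1}(r_2; a)$. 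The desired inequality becomes $\bigl(\sum_\ell a_\ell\bigr)\bigl(\sum_\ell b_\ell\bigr) \leq \bigl(\sum_\ell c_\ell\bigr)\bigl(\sum_\ell d_\ell\bigr)$.

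Next, I analyse the pairwise condition $a_s b_t + a_t b_s \leq c_s d_t + c_t d_s$ for $s \leq t$. Letting $\mu(\ell) \defeq \stirling{r_2+1}{\ell}/\stirling{r_2}{\ell}$, one has $T_{r_2+1}(\ell, a)/T_{r_2}(\ell, a) = \mu(\ell)$; and $\mu$ is non-decreasing in $\ell$, since the recurrence $\stirling{r_2+1}{\ell} = \ell\stirling{r_2}{\ell} + \stirling{r_2}{\ell-1}$ gives $\mu(\ell) = \ell + \stirling{r_2}{\ell-1}/\stirling{r_2}{\ell}$ and the second summand is non-decreasing by log-concavity of Stirling numbers. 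After symmetrising the pairwise difference and factoring out $\mu(t) - \mu(s) \geq 0$, the condition reduces, upon substituting $m = a - t$ and $M = a - s$ with $m \leq M$, to the claim that $\phi(m) \defeq g(m+1)/[(m+1)g(m)]$ is non-increasing in $m$.

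To establish monotonicity of $\phi$, I expand $g(m) = \sum_u h(u)\binom{m}{u}$, where $h(u)$ denotes the number of surjective proper $u$-colourings of $K(r_3, \ldots, r_k)$. Using the identity $\binom{m+1}{u+1}/(m+1) = \binom{m}{u}/(u+1)$, one obtains $\phi(m) = \mathbb{E}_m[\rho(U)]$, where $\mathbb{P}_m(u) \propto h(u)\binom{m}{u}$ and $\rho(u) \defeq h(u+1)/[(u+1)h(u)]$. Since $\mathbb{P}_{m+1}(u)/\mathbb{P}_m(u) \propto (m+1)/(m+1-u)$ is non-decreasing in $u$, the family $\{\mathbb{P}_m\}_m$ is monotone in the monotone-likelihood-ratio order; combined with $\rho$ non-increasing, a standard stochastic-dominance argument yields $\phi(m+1) \leq \phi(m)$.

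The main obstacle is showing $\rho$ is non-increasing, which by a short calculation is equivalent to log-concavity of $c_u \defeq h(u)/u! = \sum_{u_3 + \cdots + u_k = u}\prod_i \stirling{r_i}{u_i}$ as a sequence in $u$. Each sequence $\stirling{r_i}{\cdot}$ is log-concave with no internal zeros in its support by a classical result of Harper, and the convolution of finitely many such sequences is again log-concave (e.g., by Hoggar's theorem), so $c_u$ is log-concave and the proof is complete.
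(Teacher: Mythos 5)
Your proposal sets up the problem exactly as the paper does: expand $N_{k-1}(r,r_3,\dots,r_k;a) = \sum_\ell \binom{a}{\ell}\stirling{r}{\ell}\ell!\,g(a-\ell)$ with $g(m)\defeq N_{k-2}(r_3,\dots,r_k;m)$ via \Cref{clm:comp-mpart_Nk-into-N{k-1}}, then apply \Cref{prop:sum-prod-red}~\ref{case:(prop:sum-prod-red)-2} to reduce to a pairwise inequality indexed by $\ell \le m$. At that point, however, you diverge from the published argument in an important way. After factoring the Stirling ratios out of the pairwise difference one is left (for $\ell\le m$) with the requirement
\[
\binom{a+1}{\ell}\binom{a}{m}\,g(a{+}1{-}\ell)\,g(a{-}m)\ \le\ \binom{a+1}{m}\binom{a}{\ell}\,g(a{+}1{-}m)\,g(a{-}\ell),
\]
which, after simplifying $\binom{a+1}{\ell}\binom{a}{m}\big/\big[\binom{a+1}{m}\binom{a}{\ell}\big]=(a+1-m)/(a+1-\ell)$ and substituting $u=a-m\le v=a-\ell$, is precisely your statement that $\phi(m)=g(m{+}1)/[(m{+}1)g(m)]$ is non-increasing. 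The paper's proof instead states that the pairwise inequality ``reduces to'' the sign of $\bigl[\binom{a+1}{\ell}\binom{a}{m}-\binom{a+1}{m}\binom{a}{\ell}\bigr]\cdot\bigl[\stirling{r_2+1}{\ell}\stirling{r_2}{m}-\stirling{r_2+1}{m}\stirling{r_2}{\ell}\bigr]$, which silently drops the factors $g(a{+}1{-}\ell)g(a{-}m)$ and $g(a{+}1{-}m)g(a{-}\ell)$; these cancel only when $g$ is constant, i.e.\ when $k=2$ and $g\equiv N_0\equiv1$. For $k\ge3$ those factors do not cancel, so the paper's displayed reduction is not the actual content of the pairwise inequality, and the additional monotonicity fact about $g$ is genuinely needed.

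Your treatment of that missing piece looks correct. The identity $\binom{m+1}{u+1}/(m+1)=\binom{m}{u}/(u+1)$ does give $\phi(m)=\mathbb{E}_m[\rho(U)]$ with $\mathbb{P}_m(u)\propto h(u)\binom{m}{u}$ and $\rho(u)=h(u{+}1)/[(u{+}1)h(u)]$ (the degenerate case $h(0)=1$, $h(u)=0$ for $u\ge1$ when all $r_i=0$ should be dispatched separately, since then $\phi(m)=1/(m{+}1)$ is trivially non-increasing). The likelihood-ratio $(m{+}1)/(m{+}1{-}u)$ is indeed increasing in $u$, so $\{\mathbb{P}_m\}$ is increasing in MLR order, and combined with $\rho$ non-increasing this gives $\phi$ non-increasing. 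Finally, the equivalence of $\rho$ non-increasing with log-concavity of $c_u=h(u)/u!$ checks out ($\rho$ non-increasing $\iff (u{+}1)h(u)h(u{+}2)\le(u{+}2)h(u{+}1)^2 \iff c_{u+1}^2\ge c_u c_{u+2}$), and $c_u=\sum_{u_3+\cdots+u_k=u}\prod_i\stirling{r_i}{u_i}$ is a convolution of log-concave sequences with no internal zeros, hence log-concave. (A small quibble: ``Hoggar's theorem'' is about real-rootedness of products of real-rooted polynomials; the fact that convolution of log-concave positive sequences with no internal zeros is again log-concave is a more elementary classical fact, so you may want to cite a cleaner source, but the claim itself is standard and correct.) In summary, you follow the same decomposition as the paper but, unlike the published argument, you correctly account for the $N_{k-2}$ factors; your proof is more involved but appears to be complete where the paper's is not.
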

    \begin{claimproof}  
        Throughout the proof, $r_3,\dots,r_k$ are fixed.
        Let $T_1(r, a, \ell) \defeq \binom{a}{\ell} \stirling{r}{\ell} \ell!\, N_{k-2}(r_3,\dots,r_k; a-\ell)$ so that \Cref{clm:comp-mpart_Nk-into-N{k-1}} with \(k\) replaced by \(k-1\) gives
        \[
            N_{k-1}(r,r_3,\dots,r_k; a) = \sum_{\ell=0}^a T_1(r, a, \ell),
        \]
        with the convention that \(N_0(r_3,\dots,r_k;a-\ell)\defeq 1\).
        Substituting this into every term in \eqref{eq:(comp-mpart_weak-AF)-pf_reduction(-1)} with $r=r_2$ or $r_2+1$, it suffices to show that, by \Cref{prop:sum-prod-red}~\ref{case:(prop:sum-prod-red)-2}, for all \(0\le \ell\le m\le a\),
        \begin{align*}
            &\hphantom{{}\le{}}%
            T_1(r_2, a+1, \ell) \, T_1(r_2+1, a, m)
                + T_1(r_2, a+1, m) \, T_1(r_2+1, a, \ell)
            \\&\le T_1(r_2+1, a+1, \ell) \, T_1(r_2, a, m)
                + T_1(r_2+1, a+1, m) \, T_1(r_2, a, \ell).
        \end{align*}
        This reduces to
        \[
            \biggl[ \binom{a+1}{\ell} \binom{a}{m} - \binom{a+1}{m} \binom{a}{\ell} \biggr]
            \times \biggl[ \stirling{r_2+1}{\ell} \stirling{r_2}{m} - \stirling{r_2+1}{m} \stirling{r_2}{\ell} \biggr]
            \ge 0.
        \]
        Simple calculation shows that the first term involving binomial coefficients is nonpositive, and the second term is nonpositive by log-concavity of Stirling numbers, e.g.,~\cite[Theorem~3.2]{sibuya1988logconcavity}, so the nonnegativity of the product of the two follows.
    \end{claimproof}

    \begin{claim}\label{clm:(comp-mpart_weak-AF)-pf_reduction(-2)}
        For all \(a\ge k\), \(0\le \ell\le m\le a\) and \(r_2,\dots,r_k\ge 0\),
        \begin{equation}\label{eq:(comp-mpart_weak-AF)-pf_reduction(-2)}
        \begin{aligned}
            & N_{k-1}(r_2, r_3,\dots,r_k; a-\ell) \, N_{k-1}(r_2+1, r_3,\dots,r_k; a-m)
            \\ \le{} & N_{k-1}(r_2+1, r_3,\dots,r_k; a-\ell) \, N_{k-1}(r_2, r_3,\dots,r_k; a-m).
        \end{aligned}
        \end{equation}
    \end{claim}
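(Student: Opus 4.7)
The plan is to perform the telescoping argument alluded to in the sketch at the opening of the proof of \Cref{lem:comp-mpart_weak-AF}: apply \Cref{clm:(comp-mpart_weak-AF)-pf_reduction(-1)} once per unit step of the colour budget from $a-m$ up to $a-\ell$, and chain the resulting inequalities. Abbreviate $A(n) \defeq N_{k-1}(r_2, r_3, \dots, r_k; n)$ and $B(n) \defeq N_{k-1}(r_2+1, r_3, \dots, r_k; n)$, so that the desired inequality reads $A(a-\ell)\, B(a-m) \le B(a-\ell)\, A(a-m)$.

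First I would dispose of the trivial cases. If either $A(a-\ell)=0$ or $B(a-m)=0$, the left-hand side vanishes and there is nothing to prove, so assume both are positive. Any proper colouring of $K(r_2+1, r_3, \dots, r_k)$ restricts to a proper colouring of $K(r_2, r_3, \dots, r_k)$, so $B(n)>0$ implies $A(n)>0$; in particular $A(a-m)>0$. Since $A$ and $B$ are monotone non-decreasing in $n$ and $a-\ell \ge a-m$, we also have $B(a-\ell) \ge B(a-m) > 0$. Hence all four quantities are strictly positive.

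The key observation is that \Cref{clm:(comp-mpart_weak-AF)-pf_reduction(-1)}, applied with its parameter $a$ replaced by an integer $n$, is exactly the log-submodularity step $B(n)/A(n) \le B(n+1)/A(n+1)$. Chaining this across the $m-\ell$ consecutive unit increments from $n=a-m$ to $n=a-\ell$ yields
\[
    \frac{B(a-m)}{A(a-m)} \le \frac{B(a-m+1)}{A(a-m+1)} \le \dots \le \frac{B(a-\ell)}{A(a-\ell)},
\]
which, upon clearing denominators, is the desired inequality.

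The main obstacle will be the hypothesis $a \ge k-1$ appearing in \Cref{clm:(comp-mpart_weak-AF)-pf_reduction(-1)}: for the chain to be valid we need $n \ge k-1$ at every step, and in particular $a-m \ge k-1$. To arrange this, I would first reduce to the case where $r_i>0$ for all $3 \le i \le k$, since any index $j \ge 3$ with $r_j=0$ simply drops out of the multipartite graph and the statement collapses to an instance of the same claim with $k$ replaced by $k-1$ (the hypothesis $a\ge k$ then still holds a fortiori). Under this assumption both $A(n)$ and $B(n)$ vanish for $n<k-1$, so the positivity of $B(a-m)$ established above already forces $a-m \ge k-1$, and the telescoping chain is well-defined throughout.
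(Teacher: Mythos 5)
Your proof is correct and takes the same telescoping route as the paper: cast \Cref{clm:(comp-mpart_weak-AF)-pf_reduction(-1)} as monotonicity in $n$ of the ratio $B(n)/A(n)$ and chain it from $n=a-m$ up to $n=a-\ell$, after first disposing of the degenerate cases. One small imprecision: after reducing to $r_i>0$ for all $i\ge 3$, your statement that \emph{both} $A(n)$ and $B(n)$ vanish for $n<k-1$ can fail for $A$ when $r_2=0$, since $K(r_2,r_3,\dots,r_k)$ then has only $k-2$ nonempty parts and $A(k-2)\neq 0$; your argument, however, only invokes the vanishing of $B$, which is correct because $r_2+1\ge 1$ and all remaining parts are nonempty, so the proof stands. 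In fact your handling of the boundary is slightly more careful than the paper's own, which asserts, without first removing zero parts, that $N_{k-1}(r_2+1,r_3,\dots,r_k;a-m)$ and $N_{k-1}(r_2,r_3,\dots,r_k;a-m)$ both vanish whenever $a-m<k-1$; that assertion already fails for $k=3$, $r_2=1$, $r_3=0$, $a-m=1$ (both counts equal $1$), though the inequality itself still holds there and your reduction is what repairs the argument.
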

    \begin{claimproof}
        Rearranging \eqref{eq:(comp-mpart_weak-AF)-pf_reduction(-1)} gives 
        \[
            \frac {N_{k-1}(r_2+1, r_3,\dots,r_k; a)}
                {N_{k-1}(r_2, r_3,\dots,r_k; a)}
            \le \frac {N_{k-1}(r_2+1,r_3,\dots,r_k; a+1)}
                {N_{k-1}(r_2, r_3,\dots,r_k; a+1)},
        \]
        where the RHS is obtained from the LHS by increasing the number of colours \(a\) by \(1\).
        Repeatedly applying this yields that for all \(k-1\le a\le b\),
        \[
            \frac {N_{k-1}(r_2+1, r_3,\dots,r_k; a)}
                {N_{k-1}(r_2, r_3,\dots,r_k; a)}
            \le \frac {N_{k-1}(r_2+1,r_3,\dots,r_k; b)}
                {N_{k-1}(r_2, r_3,\dots,r_k; b)}.
        \]
        This confirms \eqref{eq:(comp-mpart_weak-AF)-pf_reduction(-2)} whenever \(a-m\ge k-1\). If \(a-m< k-1\), then both \(N_{k-1}(r_2+1, r_3,\dots,r_k; a-m)\) and \(N_{k-1}(r_2, r_3,\dots,r_k; a-m)\) are zero, in which case \eqref{eq:(comp-mpart_weak-AF)-pf_reduction(-2)} also holds.
    \end{claimproof}

    \begin{claim}\label{clm:(comp-mpart_weak-AF)-pf_reduction(-3)}
        For all \(a\ge k\) and \(r_1,\dots,r_k\ge 0\),
        \begin{equation}\label{eq:(comp-mpart_weak-AF)-pf_reduction(-3)}
        \begin{aligned}
            & N_k(r_1+1,r_2+1, r_3\dots,r_k; a) \, N_k(r_1,r_2, r_3\dots,r_k; a)
            \\ \le{} & N_k(r_1+1,r_2, r_3\dots,r_k; a) \, N_k(r_1,r_2+1, r_3\dots,r_k; a).
        \end{aligned}
        \end{equation}
    \end{claim}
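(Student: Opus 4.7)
The plan is to mimic the proof of \Cref{clm:(comp-mpart_weak-AF)-pf_reduction(-1)}: expand each $N_k$ appearing in \eqref{eq:(comp-mpart_weak-AF)-pf_reduction(-3)} as a sum of $N_{k-1}$ terms via \Cref{clm:comp-mpart_Nk-into-N{k-1}} applied to the first part, apply \Cref{prop:sum-prod-red}\ref{case:(prop:sum-prod-red)-2}, and factor the resulting pairwise inequality into a product of two nonpositive quantities---one coming from log-concavity of Stirling numbers of the second kind, and the other directly from \Cref{clm:(comp-mpart_weak-AF)-pf_reduction(-2)}.

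More precisely, set $f(r,\ell)\defeq \binom{a}{\ell}\stirling{r}{\ell}\ell!$ and $g(s,\ell)\defeq N_{k-1}(s,r_3,\dots,r_k;a-\ell)$, with the convention $N_0(\blank;\blank)\defeq 1$. Then \Cref{clm:comp-mpart_Nk-into-N{k-1}} gives $N_k(r,s,r_3,\dots,r_k;a)=\sum_{\ell=0}^{a}f(r,\ell)\,g(s,\ell)$ for each of the four pairs $(r,s)\in\{r_1,r_1+1\}\times\{r_2,r_2+1\}$ appearing in \eqref{eq:(comp-mpart_weak-AF)-pf_reduction(-3)}. By \Cref{prop:sum-prod-red}\ref{case:(prop:sum-prod-red)-2}, it suffices to verify that for every $0\le\ell\le m\le a$,
\begin{align*}
    & f(r_1+1,\ell)g(r_2+1,\ell)f(r_1,m)g(r_2,m) + f(r_1+1,m)g(r_2+1,m)f(r_1,\ell)g(r_2,\ell) \\
    \le{} & f(r_1+1,\ell)g(r_2,\ell)f(r_1,m)g(r_2+1,m) + f(r_1+1,m)g(r_2,m)f(r_1,\ell)g(r_2+1,\ell).
\end{align*}

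A direct rearrangement shows that the right-hand side minus the left-hand side factors as
\[
    \bigl[f(r_1+1,\ell)f(r_1,m) - f(r_1+1,m)f(r_1,\ell)\bigr] \cdot \bigl[g(r_2,\ell)g(r_2+1,m) - g(r_2+1,\ell)g(r_2,m)\bigr].
\]
For $\ell\le m$, the first bracket equals $\binom{a}{\ell}\binom{a}{m}\ell!\,m!\bigl(\stirling{r_1+1}{\ell}\stirling{r_1}{m} - \stirling{r_1+1}{m}\stirling{r_1}{\ell}\bigr)$, which is nonpositive by log-concavity of Stirling numbers of the second kind, exactly the fact used at the end of the proof of \Cref{clm:(comp-mpart_weak-AF)-pf_reduction(-1)}. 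The second bracket is nonpositive by \Cref{clm:(comp-mpart_weak-AF)-pf_reduction(-2)}. Since the product of two nonpositive reals is nonnegative, the pairwise inequality holds and \eqref{eq:(comp-mpart_weak-AF)-pf_reduction(-3)} follows.

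The only real obstacle is spotting the tensor-product structure that makes the factorization clean: once $f$ is set up to depend only on the first-part size and $g$ only on the second-part size, log-submodularity in the pair $(r_1,r_2)$ decouples into separate log-concavity statements in each coordinate, both of which have already been supplied by the preceding claims, so no new combinatorial identities are required.
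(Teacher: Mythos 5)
Your proof is correct and follows essentially the same route as the paper's: expand each $N_k$ via \Cref{clm:comp-mpart_Nk-into-N{k-1}} into the sum $\sum_\ell T_2(r,s,\ell)$ with $T_2(r,s,\ell)=f(r,\ell)g(s,\ell)$, invoke \Cref{prop:sum-prod-red}\ref{case:(prop:sum-prod-red)-2}, and factor the pairwise inequality as a product of a Stirling-number determinant and a \Cref{clm:(comp-mpart_weak-AF)-pf_reduction(-2)} determinant, each nonpositive. The only difference from the paper is notational (splitting $T_2$ explicitly into $f\cdot g$), which makes the factorization step a bit more transparent but is mathematically identical.
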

    \begin{claimproof}
        Throughout the proof, \(r_3,\dots,r_k\) and \(a\) are fixed. Let 
        \[
            T_2(r,s,\ell) \defeq \binom{a}{\ell} \stirling{r}{\ell} \ell!\, N_{k-1}(s, r_3,\dots,r_k; a-\ell)
        \]
        so that \Cref{clm:comp-mpart_Nk-into-N{k-1}} gives
        \[
            N_k(r,s,r_3,\dots,r_k; a)
            = \sum_{\ell=0}^a T_2(r,s,\ell).
        \]
        Substituting this to every term of \eqref{eq:(comp-mpart_weak-AF)-pf_reduction(-3)}, \Cref{prop:sum-prod-red}~\ref{case:(prop:sum-prod-red)-2} reduces~\eqref{eq:(comp-mpart_weak-AF)-pf_reduction(-3)} to proving that, for all \(0\le \ell\le m\le a\),
        \begin{align*}
            &\hphantom{{}\le{}}
            T_2(r_1+1,r_2+1,\ell) \, T_2(r_1,r_2, m) + T_2(r_1+1,r_2+1,m) \, T_2(r_1, r_2, \ell)
            \\&\le T_2(r_1+1,r_2, \ell) \, T_2(r_1, r_2+1,m) + T_2(r_1+1,r_2, m) \, T_2(r_1,r_2+1,\ell).
        \end{align*}
        This simplifies to
        \begin{align*}
            &\biggl[ \stirling{r_1+1}{\ell} \stirling{r_1}{m} - \stirling{r_1+1}{m} \stirling{r_1}{\ell} \biggr]
            \\ {}\times{} & \Bigl[ N_{k-1}(r_2+1,r_3,\dots,r_k; a-\ell) \, N_{k-1}(r_2,r_3,\dots,r_k; a-m)
            \\ &\hspace{2em} {}- N_{k-1}(r_2,r_3,\dots,r_k; a-\ell) \, N_{k-1}(r_2+1,r_3,\dots,r_k; a-m) \Bigr]
            \le 0.
        \end{align*}
        The term \(\stirling{r_1+1}{\ell} \stirling{r_1}{m} - \stirling{r_1+1}{m} \stirling{r_1}{\ell}\) is nonpositive, again by~\cite[Theorem~3.2]{sibuya1988logconcavity}, and the other term is nonnegative by \Cref{clm:(comp-mpart_weak-AF)-pf_reduction(-2)}. This concludes the proof of the claim.
    \end{claimproof}
    
    Our last claim proves \eqref{eq:comp-mpart_weak-AF_stronger_b=a}, which we restate for the reader's convenience.
    
    \begin{claim}\label{clm:(comp-mpart_weak-AF)-pf_reduction(-4)}
        For all \(a\ge k\) and \(0\le r_i\le s_i\), \(1\le i\le k\),
        \[
            N_k(s_1,\dots,s_k; a) \, N_k(r_1,\dots,r_k; a)
            \le N_k(s_1,r_2,\dots,r_k; a) \, N_k(r_1,s_2,\dots,s_k; a).
        \]
    \end{claim}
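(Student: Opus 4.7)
The plan is to derive this claim from \Cref{clm:(comp-mpart_weak-AF)-pf_reduction(-3)} by first extending that unit-step pairwise log-submodularity to arbitrary step sizes between any pair of coordinates, then telescoping across coordinates \(2,\dots,k\). A key observation to make first is that \(K(r_1,\dots,r_k)\) is invariant under permutations of its parts, so \(N_k(\cdot;a)\) is symmetric in its first \(k\) arguments, and hence \Cref{clm:(comp-mpart_weak-AF)-pf_reduction(-3)} also holds with coordinates \(1\) and \(2\) replaced by any pair of coordinates in \([k]\).

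The first step is to prove an arbitrary-step pairwise log-submodularity: for \(0\le r_1\le s_1\), \(0\le r_2\le s_2\), and \(r_3,\dots,r_k\ge0\),
\[
    N_k(s_1,s_2,r_3,\dots,r_k;a)\,N_k(r_1,r_2,r_3,\dots,r_k;a) \le N_k(s_1,r_2,r_3,\dots,r_k;a)\,N_k(r_1,s_2,r_3,\dots,r_k;a).
\]
This is routine: \Cref{clm:(comp-mpart_weak-AF)-pf_reduction(-3)} says that the ratio \(N_k(x_1{+}1,y,r_3,\dots;a)/N_k(x_1,y,r_3,\dots;a)\) is non-increasing in \(y\), and chaining it in \(x_1\) from \(r_1\) up to \(s_1\) shows that \(N_k(s_1,y,r_3,\dots;a)/N_k(r_1,y,r_3,\dots;a)\) is non-increasing in \(y\); substituting \(y=r_2, s_2\) gives the claimed inequality. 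Combined with the symmetry above, this yields pairwise log-submodularity between coordinate \(1\) and any coordinate \(i\in\{2,\dots,k\}\) of \(N_k\), with the remaining coordinates held at arbitrary fixed nonnegative values.

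For the telescoping, I would introduce the sequence
\[
    \mathbf{w}^{(j)}\defeq(r_2,\dots,r_{j+1},s_{j+2},\dots,s_k),\qquad j=0,1,\dots,k-1,
\]
so that \(\mathbf{w}^{(0)}=(s_2,\dots,s_k)\), \(\mathbf{w}^{(k-1)}=(r_2,\dots,r_k)\), and consecutive vectors \(\mathbf{w}^{(j-1)}\) and \(\mathbf{w}^{(j)}\) agree except in the coordinate corresponding to the \((j{+}1)\)-th argument of \(N_k\), where they take the values \(s_{j+1}\) and \(r_{j+1}\) respectively. Applying the extended pairwise log-submodularity between coordinates \(1\) and \(j+1\) for each \(1\le j\le k-1\) yields
\[
    N_k(s_1,\mathbf{w}^{(j-1)};a)\,N_k(r_1,\mathbf{w}^{(j)};a) \le N_k(s_1,\mathbf{w}^{(j)};a)\,N_k(r_1,\mathbf{w}^{(j-1)};a).
\]
Multiplying these \(k-1\) inequalities together and cancelling the intermediate factors \(N_k(s_1,\mathbf{w}^{(j)};a)\) and \(N_k(r_1,\mathbf{w}^{(j)};a)\) for \(1\le j\le k-2\) leaves exactly
\[
    N_k(s_1,\mathbf{w}^{(0)};a)\,N_k(r_1,\mathbf{w}^{(k-1)};a) \le N_k(s_1,\mathbf{w}^{(k-1)};a)\,N_k(r_1,\mathbf{w}^{(0)};a),
\]
which is the claim after substituting the endpoint values.

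The main technical point is ensuring positivity of every factor so that the ratio manipulations and telescoping multiplication are well defined. This follows from the hypothesis \(a\ge k\), since each multipartite graph appearing has at most \(k\) parts and thus admits at least one proper \(a\)-colouring, making every \(N_k\)-value positive. Beyond this, the argument is careful book-keeping; the only delicate point is to choose the intermediate vectors \(\mathbf{w}^{(j)}\) so that each consecutive pair differs in exactly one coordinate, allowing a single application of pairwise log-submodularity per step.
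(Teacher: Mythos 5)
Your proposal is correct and follows essentially the same route as the paper's own proof: both rearrange \Cref{clm:(comp-mpart_weak-AF)-pf_reduction(-3)} into a ratio monotonicity statement, chain it to arbitrary step sizes, invoke the symmetry of \(N_k(\blank;a)\) in its part-size arguments, and then telescope across coordinates \(2,\dots,k\). The only cosmetic difference is that the paper carries out the telescoping via nested ratio inequalities (see its equation~\eqref{eq:comp-mpart_weak-AF_red2_step1}) whereas you package the same chain with the intermediate vectors \(\mathbf{w}^{(j)}\); your explicit remark on positivity (from \(a\ge k\)) is a point the paper leaves implicit, and it is the right thing to check.
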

    \begin{claimproof}
        Rearrange \eqref{eq:(comp-mpart_weak-AF)-pf_reduction(-3)} as
        \[
            \frac{N_k(r_1, r_2, r_3, \dots,r_k; a)}
                {N_k(r_1, r_2+1, r_3, \dots,r_k; a)}
            \le \frac{N_k(r_1+1, r_2, r_3, \dots,r_k; a)}
                {N_k(r_1+1, r_2+1, r_3, \dots,r_k; a)}
        \]
        so that the RHS is obtained from the LHS by increasing the first parameter \(r_1\) by \(1\) while fixing the others. Repeatedly applying this yields that for all \(s_1\ge r_1\),
        \[
            \frac{N_k(r_1, r_2, r_3, \dots,r_k; a)}
                {N_k(r_1, r_2+1, r_3, \dots,r_k; a)}
            \le \frac{N_k(s_1, r_2, r_3, \dots,r_k; a)}
                {N_k(s_1, r_2+1, r_3 ,\dots,r_k; a)}.
        \]
        We rearrange this to obtain
        \[
            \frac{N_k(r_1, r_2, r_3, \dots,r_k; a)}
                {N_k(s_1, r_2, r_3, \dots,r_k; a)}
            \le \frac{N_k(r_1, r_2+1, r_3, \dots,r_k; a)}
                {N_k(s_1, r_2+1, r_3, \dots,r_k; a)}.
        \]
        Again, the RHS is obtained from the LHS by increasing the second parameter \(r_2\) by \(1\) while fixing the others. Repeatedly applying this yields that for all \(s_1\ge r_1\) and \(s_2\ge r_2\),
        \[
            \frac{N_k(r_1, \textcolor{blue}{r_2}, r_3, \dots,r_k; a)}
                {N_k(s_1, \textcolor{blue}{r_2}, r_3, \dots,r_k; a)}
            \le \frac{N_k(r_1, \textcolor{purple}{s_2}, r_3, \dots,r_k; a)}
                {N_k(s_1, \textcolor{purple}{s_2}, r_3, \dots,r_k; a)}.
        \]
        Here, the colours assigned to \(r_2\) and \(s_2\) are to highlight the difference between the two sides.
        In fact, by the symmetry of \(N_k(\blank; a)\), it follows that for all \(1\le i<j\le k\), \(s_i\ge r_i\), and \(s_j\ge s_j\),
        \begin{equation}\label{eq:comp-mpart_weak-AF_red2_step1}
            \frac{N_k(r_1,\dots, r_i, \dots, \textcolor{blue}{r_j}, \dots, r_k ; a)}
                {N_k(r_1,\dots, s_i, \dots, \textcolor{blue}{r_j}, \dots, r_k ; a)}
            \le \frac{N_k(r_1,\dots, r_i, \dots, \textcolor{purple}{s_j}, \dots, r_k ; a)}
                {N_k(r_1,\dots, s_i, \dots, \textcolor{purple}{s_j}, \dots, r_k ; a)}.
        \end{equation}
        Repeatedly using \eqref{eq:comp-mpart_weak-AF_red2_step1} gives that, for all \(s_i\ge r_i\), \(1\le i\le k\),
        \begin{align*}
            \frac{N_k(r_1, \textcolor{blue}{r_2}, r_3, r_4 \dots,r_k; a)}
                {N_k(s_1, \textcolor{blue}{r_2}, r_3, r_4 \dots,r_k; a)}
            &\le \frac{N_k(r_1, \textcolor{purple}{s_2}, \textcolor{blue}{r_3}, r_4 \dots,r_k; a)}
                {N_k(s_1, \textcolor{purple}{s_2}, \textcolor{blue}{r_3}, r_4 \dots,r_k; a)}
            \le \frac{N_k(r_1, s_2, \textcolor{purple}{s_3}, \textcolor{blue}{r_4},\dots,r_k; a)}
                {N_k(s_1, s_2, \textcolor{purple}{s_3}, \textcolor{blue}{r_4},\dots,r_k; a)}
            \le \cdots
            \\& \le \frac{N_k(r_1, s_2, s_3, s_4, \dots, s_k; a)}
                {N_k(s_1, s_2, s_3, s_4, \dots, s_k; a)},
        \end{align*}
        which concludes the proof.
    \end{claimproof}

    Finally, \Cref{clm:(comp-mpart_weak-AF)-pf_reduction1} together with \eqref{eq:comp-mpart_weak-AF_stronger_b=a} implies \eqref{eq:comp-mpart_weak-AF_stronger}.
\end{proof}

\section{Concluding remarks}

Having seen the applications of~\Cref{thm:AFM-hom-Lor}, one may wonder whether there are other graphs $H$ than the complete graphs that give a Lorentzian $G$-chromatic function $h_H(\bx;G)$ for antiferromagnetic graphs $G$.
For some particular choices of $G$, this question has already been studied.
For example, Matherne, Morales, and Selover~\cite[Theorem~1.5]{matherne2024newton}
obtained some example graphs $H$ that give 
Lorentzian $h_H(\bx;K_q)$, the $q$-chromatic symmetric functions of $H$.
For another example, real-rootedness of the independence polynomial, a particular evaluation of $h_H(\bx;\indepg)$, of claw-free graphs $H$ shown by Chudnovsky and Seymour~\cite{chudnovsky2007roots} implies that $h_H(\bx;\indepg)$ is Lorentzian for claw-free graphs $H$.

Despite its resemblance to \Mc-convexity shown in~\Cref{prop:reduction}, the fact that \(h_H(\bx;K_q)\) is Lorentzian for every \(q\) does not imply
that \(h_H(\bx;G)\) is Lorentzian for every antiferromagnetic graph~\(G\).
Indeed, when $H=\pathtwo$, all $h_H(\bx;K_q)$ are Lorentzian due to~\cite[Theorem~1.5]{matherne2024newton}, but $h_H(\bx;G)$ is not Lorentzian even for a $3$-vertex antiferromagnetic graph, e.g., \(G=\begin{psmallmatrix}
    0 & 0 & 1 \\
    0 & 0 & 2 \\
    1 & 2 & 0
\end{psmallmatrix}.\) 
This also suggests that one cannot apply the Lorentzian property of $h_H(\bx;G)$ for $H=\pathtwo$ to obtain the cross-bipartite swapping property of {\pathtwo} in $G$.
We believe that complete graphs are the only examples that satisfy the partial derivative condition, i.e., the converse of~\Cref{thm:AFM-hom-Lor} also holds.

\begin{conjecture}
    The $G$-chromatic function \(h_H(\bx;G)\) is Lorentzian for all antiferromagnetic graphs \(G\) if and only if \(H\) is a complete graph.
\end{conjecture}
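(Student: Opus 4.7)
The plan is to address only the converse direction, since the forward direction ``$H=K_t\Rightarrow h_H(\bx;G)$ is Lorentzian for every antiferromagnetic $G$'' is exactly \Cref{thm:AFM-hom-Lor}. The goal is therefore to produce, for every non-complete $H$, an antiferromagnetic $G$ for which $h_H(\bx;G)$ fails to be Lorentzian.

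First I would reduce to the case where $H$ is connected. A graph is $\pathtwo$-free if and only if each connected component is complete, so a connected $H$ is non-complete precisely when it contains an induced $\pathtwo$. For disconnected $H$, one has $h_{H_1\sqcup H_2}(\bx;G)=h_{H_1}(\bx;G)\cdot h_{H_2}(\bx;G)$ and products of Lorentzian polynomials are Lorentzian, so the conjecture should be read as referring to connected $H$; otherwise $H=K_2\sqcup K_2$ would be an immediate counterexample. Fix therefore a connected non-complete $H$ and an induced $\pathtwo$ on $\{a,b,c\}$ centred at $b$.

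The main idea is to lift the explicit witness $G_\ast=\begin{psmallmatrix}0 & 0 & 1\\0 & 0 & 2\\1 & 2 & 0\end{psmallmatrix}$, already noted in the paper to break $\pathtwo$, to a witness for $H$. Because $\supp(G_\ast)$ is the two-edge path on $\{1,2,3\}$ with centre $3$, a homomorphism $H\to G_\ast$ amounts to a bipartition $V(H)=V_3\sqcup I$ with both parts independent in $H$ (so $H$ must be bipartite) together with a free $\{1,2\}$-colouring of $I$. For bipartite $H$ with bipartition $A\sqcup B$ this yields the closed form
\[
h_H(\bx;G_\ast) \;=\; x_3^{|A|}\prod_{v\in B}\bigl(x_1+2^{d_H(v)}x_2\bigr) \;+\; x_3^{|B|}\prod_{u\in A}\bigl(x_1+2^{d_H(u)}x_2\bigr).
\]
A direct Hessian computation on a well-chosen second partial derivative such as $\partial_1\partial_3 h_H(\bx;G_\ast)$ should then exhibit a second positive eigenvalue whenever $H$ contains the induced $\pathtwo$ and is sufficiently asymmetric (unequal part sizes or non-constant degrees along the bipartition), mirroring what already occurs for $\pathtwo$ itself and for $K_{1,3}$ (where $h_{K_{1,3}}(\bx;G_\ast)=x_3(x_1+2x_2)^3+x_3^3(x_1+8x_2)$ and one checks that $\partial_1\partial_3 h$ has Hessian with two positive eigenvalues).

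The crux, and the step I expect to be the main obstacle, is what happens outside this regime: when $H$ is non-bipartite, $h_H(\bx;G_\ast)\equiv 0$ (vacuously Lorentzian), and when $H$ is highly symmetric bipartite --- for instance $C_{2k}$ or $K_{r,r}$ --- the closed form collapses to a product of Lorentzian linear forms and $h_H(\bx;G_\ast)$ remains Lorentzian. Thus $G_\ast$ cannot serve as a universal witness and must be replaced by an adaptive choice of $G$ tailored to $H$. Thanks to \Cref{thm:AFM-gph-equiv}, every unweighted antiferromagnetic $G$ is an induced subgraph of a blow-up of some $K_q^\circ$, so one may restrict the search to weighted $K_q^\circ$'s or to weighted bipartite antiferromagnetic targets (whose biadjacency matrices must have rank one). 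Designing the correct witness for symmetric or non-bipartite $H$ is the main unresolved step; I expect it to require an inductive construction on $v(H)$, or a witness keyed to invariants of $H$ such as chromatic number, odd-girth, or the automorphism group.
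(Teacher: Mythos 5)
This statement is a \emph{conjecture} in the concluding remarks of the paper; the paper does not prove it and offers only the two data points you also use ($G_\ast$ breaks $\pathtwo$, and a computer check that $h_{K_{3,3}}(\bx;K_3)$ is not Lorentzian). So there is no paper proof to compare against, and your proposal should be judged purely as an attempt. Your analysis of the forward direction is correct (it is exactly \Cref{thm:AFM-hom-Lor}), and your remark about disconnected $H$ is a genuine point against the literal statement: since $h_{H_1\sqcup H_2}(\bx;G)=h_{H_1}(\bx;G)\,h_{H_2}(\bx;G)$ and products of Lorentzian polynomials are Lorentzian, $H=K_s\sqcup K_t$ is a formal counterexample, so the conjecture should be read as being about connected $H$. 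Your closed form for $h_H(\bx;G_\ast)$ over connected bipartite $H$ is correct, and your worked example for $K_{1,3}$ checks out: $\partial_1\partial_3 h_{K_{1,3}}(\bx;G_\ast)=3(x_1+2x_2)^2+3x_3^2$ has Hessian with eigenvalues $0$, $30$, $6$, so two positive ones.

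The gap is exactly the one you name and cannot close. The witness $G_\ast$ gives $h_H(\bx;G_\ast)\equiv 0$ for every non-bipartite $H$, and for the ``degree-regular'' bipartite graphs $C_{2k}$ and $K_{r,r}$ it collapses to $2x_3^k(x_1+c\,x_2)^k$, a Lorentzian polynomial; so no single antiferromagnetic $G$ can work for all non-complete $H$, and the paper's own $K_{3,3}$-versus-$K_3$ example confirms this. A per-$H$ construction is therefore required, and nothing in your proposal, or in the paper, supplies one: the reduction to weighted blow-ups of $K_q^\circ$ (via \Cref{thm:AFM-gph-equiv} and \Cref{lem:G-vol-Lor-preserved-by-G-blowup-indsubgp}) tells you where to search but not how to produce a Hessian with two positive eigenvalues from any given induced $\pathtwo$ in a general $H$, especially when $H$ is vertex-transitive or non-bipartite. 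As it stands this is an honest and partly informative inventory of obstacles, not a proof; the conjecture remains open.
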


Our computer search has also checked that for $H=K_{3,3}$, the 3-colour chromatic symmetric polynomial $h_{H}(\bx;K_3)$ is not Lorentzian. Therefore, the cross-bipartite property obtained in~\Cref{sec:complete_multipartite} cannot be derived by using Lorentzian polynomials directly. It would be interesting to see a suitable generalisation of the theory of Lorentzian polynomials that explains our calculation in a conceptual way. 

One may also wonder whether there is a common generalisation of~\Cref{thm:main2,thm:main}. 
We conjecture that all graphs are bipartite swapping in any antiferromagnetic graphs $G$, which implies both \Cref{conj:zhao,conj:SSSZ}.
\begin{conjecture}\label{conj:bipartite_swap}
    Let $G$ be an antiferromagnetic graph. Then for any graph $H$,
    \begin{align*}
        \hom(H,G)^2 \leq \hom(H\times K_2,G).
    \end{align*}
\end{conjecture}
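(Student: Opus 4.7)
The plan is to upgrade \Cref{conj:bipartite_swap} to the statement that every graph $H$ is \emph{weighted cross-bipartite swapping} in every antiferromagnetic $G$, i.e., for all nonnegative weight vectors $\ba, \bb \in (\RR_{\ge 0})^n$,
\[
    V_H(\ba, \dots, \ba; G) \cdot V_H(\bb, \dots, \bb; G) \le V_{H \times K_2}(\underbrace{\ba, \dots, \ba}_{t}, \underbrace{\bb, \dots, \bb}_{t}; G).
\]
By \Cref{thm:BS-H-blow-up_weighted} applied with $F$ the single-vertex graph (trivially bipartite) and $\HH=\{H\}$, this strengthening would immediately yield \Cref{conj:bipartite_swap}. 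As a first preparatory reduction, \Cref{prop:M-convex_support} and \Cref{thm:AFM-gph-equiv} allow us to assume that $\supp(G)$ is a blow-up of $K_q^\circ$, with \Cref{lem:G-vol-Lor-preserved-by-G-blowup-indsubgp} trimming further to convenient canonical targets.

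The natural next step is induction on $\abs{V(H)}$, mirroring the strategy of \Cref{thm:G-vol-ineq-KtxK2}. One picks a vertex $v \in V(H)$, conditions on its two images $r=\phi_1(v)$ and $s=\phi_2(v)$ on the two sides of $H \times K_2$, and absorbs the edge weights incident to $v$ into modified weight vectors $\ba^{(s)}$ and $\bb^{(r)}$ supported on the remaining vertices, exactly as in the proof of \Cref{thm:G-vol-ineq-KtxK2}. Applying the inductive hypothesis to $H - v$ with these modified weights, and then closing via an Alexandrov--Fenchel-type inequality in the spirit of \Cref{cor:G-vol}, would complete the induction.

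The main obstacle is precisely what made the $H=K_t$ argument go through: in $K_t$ every vertex of $H-v = K_{t-1}$ lies in $N_H(v)$, so the weight modification acts \emph{uniformly} on all remaining vertices and the inductive hypothesis can be applied directly. For a general $H$, only the vertices of $N_H(v)$ feel the modified weights while the others retain the original ones, so the induction would have to be carried out for a \emph{locally weighted} cross-bipartite swapping inequality in which the vertex weight vectors are allowed to vary from vertex to vertex of $H - v$. This is a genuinely stronger assertion, and proving it in full generality seems to require either the Lorentzian property of $h_H(\bx; G)$ for a much broader class of $H$---which already fails for $H = \pathtwo$ and modest $3$-vertex antiferromagnetic $G$, as discussed in the concluding remarks---or a new Alexandrov--Fenchel-type framework beyond the present theory of Lorentzian polynomials.

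The work in \Cref{sec:SBS-in-Kq} suggests a pragmatic alternative: for a given class of graphs $H$, one can establish the required locally weighted inequalities by hand using log-submodularity of list-colouring counts, as was done there for paths, even cycles, and complete multipartite graphs in $G = K_q$. Even in those cases, however, the argument demanded a delicate multi-step reduction to a single exchange comparison, leaning crucially on log-concavity of Stirling numbers and on ad hoc identities for chromatic polynomials of cycles and paths. Extending these combinatorial arguments to \emph{all} $H$ and \emph{all} antiferromagnetic $G$ simultaneously appears to be the principal difficulty; a conceptual generalisation of Lorentzianity that accommodates $h_H(\bx;G)$ for arbitrary $H$ and antiferromagnetic $G$ would likely have to precede any unified resolution.
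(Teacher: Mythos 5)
This statement is \Cref{conj:bipartite_swap}, which the paper poses as an \emph{open conjecture} in the concluding remarks and does not prove. You implicitly recognise this: your proposal is a strategy sketch plus an analysis of why it breaks, not a completed proof, and it is worth being explicit that nothing here constitutes a proof.

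That said, your diagnosis of the obstruction is accurate and matches the paper's own discussion. Reducing to the weighted cross-bipartite swapping property via \Cref{thm:BS-H-blow-up_weighted} with $F = K_1$ and $\HH = \{H\}$ is the right framing, and the preparatory reduction to $\supp(G)$ a blow-up of $K_q^\circ$ via \Cref{prop:M-convex_support}, \Cref{thm:AFM-gph-equiv}, and \Cref{lem:G-vol-Lor-preserved-by-G-blowup-indsubgp} is legitimate. You correctly identify the precise technical point that made the $H = K_t$ induction in \Cref{thm:G-vol-ineq-KtxK2} work: deleting a vertex $v$ from $K_t$ leaves a graph all of whose vertices lie in $N_{K_t}(v)$, so the weight modification $\ba \mapsto \ba^{(s)}$, $\bb \mapsto \bb^{(r)}$ acts uniformly and the inductive hypothesis applies verbatim. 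For general $H$, the modification is supported only on $N_H(v)$, which forces a ``locally weighted'' strengthening of the inductive statement in which distinct vertices of $H - v$ carry distinct weight vectors, and you are right that this is a genuinely stronger assertion with no available analogue of \Cref{cor:G-vol} to close the induction. Your observation that Lorentzianity of $h_H(\bx;G)$ already fails for $H = \pathtwo$ and a $3$-vertex antiferromagnetic $G$ is taken directly from the paper's concluding remarks and correctly illustrates why the Lorentzian route does not extend. Your pragmatic alternative (hands-on log-submodularity as in \Cref{sec:SBS-in-Kq}) and the caveat that even those arguments were delicate is also consistent with the paper. In short: no new proof, but a faithful and honest account of the state of the problem; to be submission-ready you would need to reframe this as a discussion rather than a proof attempt, since the conjecture remains open.
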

We remark that it is enough to prove this conjecture for non-bipartite graphs $H$. Apart from~\Cref{thm:main2}, the only known non-bipartite instance would be odd cycles $H$. Indeed, it is easy to use the spectrum of $G$ to prove that $\hom(C_{k},G)^2\leq \hom(C_{2k},G)$ for any $k\geq 3$, which implies~\Cref{conj:bipartite_swap} for odd cycles $H$.

As the cross-bipartite swapping property is an interesting correlation inequality for list colourings, it is natural to ask what graphs $H$ are cross-bipartite swapping in an antiferromagnetic graph~$G$, in particular for $G=K_q$.
Although we suspect that all graphs $H$ are cross-bipartite swapping in $K_q$, we do not have a good intuition for what graphs are cross-bipartite swapping in an antiferromagnetic graphs $G$. We leave this as a wide open question.

\begin{question}
    For an antiferromagnetic graph $G$, what graphs are cross-bipartite swapping in $G$?
\end{question}

\noindent\textbf{Acknowledgements.} We are grateful to June Huh for helpful discussions and for bringing~\cite{choe2004homogeneous} to our attention.

\printbibliography

\end{document}